\numberwithin{equation}{subsection}
   \def\MR#1{}
\definecolor{USCG}{HTML}{FFCC00}
\definecolor{USCR}{HTML}{990000}
\definecolor{seagreen}{RGB}{46,139,87}
\definecolor{maroon}{RGB}{128,0,0}
\definecolor{darkviolet}{RGB}{148,0,211}
\definecolor{twelve}{RGB}{100,100,170}
\definecolor{thirteen}{RGB}{100,150,50}
\definecolor{fourteen}{RGB}{200,0,0}
\definecolor{fifteen}{RGB}{0,200,0}
\definecolor{sixteen}{RGB}{0,0,200}
\definecolor{seventeen}{RGB}{200,0,200}
\definecolor{eighteen}{RGB}{0,200,200}
\newcommand{\bb}[1]{\mathbb{#1}}
\newcommand{\mbf}[1]{\mathbf{#1}}
\newcommand{\ms}[1]{\mathscr{#1}}
\newcommand{\es}[1]{\EuScript{#1}}
\renewcommand{\sf}[1]{\mathsf{#1}}
\DeclareMathOperator{\colim}{\mathrm{colim}}
\DeclareMathOperator{\hocolim}{\mathrm{hocolim}}
\DeclareMathOperator{\holim}{\mathrm{holim}}
\DeclareMathOperator{\fiber}{\mathrm{fib}}
\newcommand{\fib}{\mathsf{fib}}
\newcommand{\s}{{\sf{Sp}}}
\DeclareMathOperator{\T}{\es{S}}
\newcommand{\poly}[1]{\mathsf{Poly}^{\leq #1}}
\newcommand{\homog}[1]{\mathsf{Homog}^{#1}}
\newcommand{\Aut}{\mathrm{Aut}}
\newcommand{\Fun}{\sf{Fun}}
\DeclareMathOperator{\Hom}{\mathsf{Hom}}
\DeclareMathOperator{\Map}{\mathsf{Map}}
\DeclareMathOperator{\mor}{\mathsf{mor}}
\DeclareMathOperator{\nat}{\mathsf{nat}}
\DeclareMathOperator{\id}{\mathrm{Id}}
\DeclareMathOperator{\sing}{\mathrm{Sing}}
\newcommand{\R}{\mbf{R}}
\newcommand{\op}{\mathrm{op}}
  \newcommand{\adjunction}[4]{
\xymatrix{
#1:#2 \ar@<.5ex>[r] &
\ar@<.5ex>[l] #3:#4
}}
\newtheorem{thm}{Theorem}[subsection]
\newtheorem{prop}[thm]{Proposition}
\newtheorem{lem}[thm]{Lemma}
\newtheorem{cor}[thm]{Corollary}
\newtheorem{hypothesis}[thm]{Hypothesis}
\theoremstyle{definition}
\newtheorem{definition}[thm]{Definition}
\newtheorem{ex}[thm]{Example}
\newtheorem{exs}[thm]{Examples}
\newtheorem{rem}[thm]{Remark}
\newtheorem{xxthm}{Theorem}
\begin{document}

%% Title

\title{Symplectic Weiss calculi}

\author{Matthew Carr}
\address[Carr]{Department of Population and Public Health Sciences, University of Southern California}
\email{mbcarr@usc.edu}

\author{Niall Taggart}
\address[Taggart]{Radboud University Nijmegen}
\email{niall.taggart@ru.nl}

\date{\today}

%\subjclass[2020]{}

%\keywords{}

\begin{abstract}
We provide two candidates for symplectic Weiss calculus based on two different, but closely related, collections of groups. In the case of the non-compact symplectic groups, i.e., automorphism groups of vector spaces with symplectic forms, we show that the calculus deformation retracts onto unitary calculus as a corollary of the fact that Weiss calculus only depends on the homotopy type of the groupoid core of the diagram category. In the case of the compact symplectic groups, i.e., automorphism groups of quaternion vector spaces, we provide a comparison with the other known versions of Weiss calculus analogous to the comparisons of calculi of the second author, and classify certain stably trivial quaternion vector bundles over finite cell complexes in a range, using elementary results on convergence of Weiss calculi.
\end{abstract}
\maketitle

\setcounter{tocdepth}{1}
{\hypersetup{linkcolor=black} \tableofcontents}

\section{Introduction}

Weiss calculi are a family of homotopy theoretic tools designed to study geometric problems arising from differential topology. The original example of such a calculus was \emph{orthogonal calculus}~\cite{Weiss}, which provides a calculus for functors from real inner product spaces to spaces. Other variants now include \emph{unitary calculus}~\cite{TaggartUnitary} which studies the corresponding complex geometry, \emph{unitary calculus with Reality}~\cite{TaggartReality} which takes into account the symmetry in complex geometry given by complex conjugation, general \emph{equivariant Weiss calculi}~\cite{Yavuz,BhattacharyaHu} and \emph{$FI$-calculus}~\cite{Arro} which considers functors from finite sets to any stable $\infty$-category, and is closely related to representation stability. 

Applications of various versions of Weiss calculus abound geometry and homotopy theory. For instance, on the geometric side, Krannich and Randal-Williams~\cite{KrannichRandal-Williams} compute the (rational) second orthogonal derivative of $\mathsf{BTop}(\R^d)$, the classifying space of topological rank $d$-bundles, and use this to compute the rational homotopy type of the group of diffeomorphisms of a closed $d$-dimensional disc fixing the boundary in a specified range, and determine the optimal rational concordance stable range for high-dimensional discs. On the more homotopy-theoretic side, versions of Weiss calculus open up the possibility of encoding certain levels of functoriality, which are not present on the level of spaces. For instance, this kind of functoriality has been utlised by Arone~\cite{AroneAkfree} in their construction of finite type $n$ spectra, by Behrens~\cite{BehrensEHP} in relating the EHP sequence to Goodwillie calculus, and by Kuhn~\cite{KuhnWhitehead} when studying the Whitehead Conjecture. 

With applications like these in mind, the question of when a category of functors admit a version of Weiss calculus is an interesting one, which has received much attention. For example, Anel, Biedermann, Finster and Joyal~\cite{ABFJ} claim to recover orthogonal calculus from their topos-theoretic approach to generalised Goodwillie calculus. 

In this article, we consider two natural extensions of Weiss calculus to symplectic geometry. The first is to consider the potential of a version of Weiss calculus for functors from vector spaces with a symplectic form to spaces, i.e., a version of Weiss calculus based on \emph{non-compact symplectic groups}. We call this version \emph{symplectic Weiss calculus}. The second  is to consider a version of Weiss calculus built on the quaternions rather than Euclidean spaces, i.e., a version of Weiss calculus based on \emph{compact symplectic groups}. We call this version \emph{quaternion Weiss calculus}. 

\subsection*{Symplectic Weiss calculus}
In this version of Weiss calculus we consider functors from the category of vector spaces with symplectic forms to spaces. Part~\ref{part: non-compact} can be summarised in the following result. 

\begin{xxthm}
    A version of Weiss calculus exists for functors from vector spaces with a symplectic form to spaces, and this Weiss calculus is equivalent to unitary calculus.
\end{xxthm}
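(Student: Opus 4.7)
The plan is to derive this theorem as a direct application of the invariance principle highlighted in the abstract, namely that Weiss calculus depends only on the homotopy type of the groupoid core of its diagram category. The main work is thus to identify the symplectic diagram category and to match its groupoid core with that of the unitary diagram category.

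First I would introduce the symplectic Weiss diagram category $J^{Sp}$, whose objects are finite-dimensional real vector spaces $(V,\omega)$ equipped with a non-degenerate symplectic form and whose morphisms are symplectic linear embeddings, enriched in spaces via the Stiefel-type description $\mathsf{mor}_{J^{Sp}}(V,W) \simeq Sp(W)/Sp(W \ominus V)$, where $W \ominus V$ denotes the symplectic orthogonal complement of an embedded $V$. One then defines polynomial and homogeneous functors and the associated Taylor tower for functors $J^{Sp} \to \sS$ by transporting the definitions from the orthogonal and unitary cases, with $Sp(2n,\mathbb{R})$ playing the role of the automorphism group at level $n$.

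Next I would construct the comparison functor $\iota \colon J^U \to J^{Sp}$ sending a complex inner product space $(V,h)$ to its underlying real vector space equipped with the symplectic form $\mathrm{Im}(h)$, and acting on embeddings in the obvious way. On automorphisms this is the standard inclusion $U(n) \hookrightarrow Sp(2n,\mathbb{R})$, which is a homotopy equivalence since $U(n)$ is a maximal compact subgroup and $Sp(2n,\mathbb{R})/U(n)$ is contractible (for instance, it is diffeomorphic to the Siegel upper half space). The analogous statement for morphism spaces follows from a five-lemma argument on the fibre sequences
\[
U(m-n) \to U(m) \to U(m)/U(m-n), \qquad Sp(2(m-n),\mathbb{R}) \to Sp(2m,\mathbb{R}) \to Sp(2m,\mathbb{R})/Sp(2(m-n),\mathbb{R}),
\]
so that $\iota$ is a Dwyer--Kan equivalence of enriched diagram categories and in particular induces an equivalence on groupoid cores.

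Finally, I would invoke the invariance principle from earlier in the paper to conclude that precomposition with $\iota$ induces an equivalence between the corresponding categories of polynomial, homogeneous, and (under the usual convergence hypotheses) analytic functors, compatible with Taylor towers; this is the sense in which symplectic Weiss calculus deformation retracts onto unitary Weiss calculus. The main obstacle I anticipate is calibrating the invariance principle so that it applies to a diagram category of this form: one must check that the abstract statement really does accept an inclusion of enriched categories whose underlying groupoid cores are levelwise weakly equivalent, rather than, say, requiring an equivalence of the full underlying $\infty$-categories. Once that is established, the actual verification that $U(n) \hookrightarrow Sp(2n,\mathbb{R})$ is a weak equivalence at every level is classical and essentially reduces the theorem to a citation.
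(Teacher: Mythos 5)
Your proposal follows essentially the same route as the paper: realise the symplectic diagram category (the paper does so as Quillen's bracket construction on the groupoid of symplectic vector spaces and isomorphisms), observe that $U(n)\hookrightarrow \mathsf{Sp}(2n,\mathbf{R})$ is an inclusion of a maximal compact subgroup and hence a homotopy equivalence, promote this to a Dwyer--Kan equivalence of diagram categories, and transport the unitary tower across the resulting equivalence of functor $\infty$-categories. The single point you flag as needing calibration---that a levelwise equivalence of groupoid cores suffices---is precisely the content of the paper's Lemma~\ref{lem: equiv detected on groupoid cores}, proved by exhibiting the bracket-construction mapping spaces as homotopy colimits over $B\Aut(X)$ with a free action; your five-lemma argument on the fibre sequences of homogeneous spaces is an equivalent concrete verification of the same full-faithfulness claim.
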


The intricacies of linear algebra in the absence of an inner product lead one to require a new method for constructing the symplectic calculus. Taking motivation from homological stability, see for example,~\cite{RWW}, we show that Weiss calculus only depends on the homotopy type of the groupoid core of the `indexing category' by exploiting Quillen's bracket construction\footnote{Quillen's bracket construction produces from a groupoid $S$ a small symmetric monoidal category $\langle S, S \rangle$ by considering the action of the groupoid on itself. By considering the groupoid $S$ acting on $S \times S$ one recovers what is classically known as Quillen's $S^{-1}S$-construction, which in our notation would be the category $\langle S, S\times S\rangle$. We will make no use of the latter construction in this paper.}. The following table describes the relationship between known versions of Weiss calculus and the groupoid cores of the indexing categories. 

\begin{center}
    \begin{tabular}{|p{4cm}|p{5cm}|p{6cm}|}
    \hline
    \textbf{Weiss calculus} & \textbf{Groupoid core} & \textbf{Quillen's bracket construction} \\
    \hline
    Orthogonal~\cite{Weiss} & Finite-dimensional real inner product spaces with linear isometric isomorphisms & Finite-dimensional real inner product spaces with linear isometric embeddings \\
    \hline 
    Unitary~\cite{TaggartUnitary} & Finite-dimensional Hermitian inner product spaces with linear isometric isomorphisms & Finite-dimensional Hermitian inner product spaces with linear isometric embeddings \\
    \hline 
    $\sf{FI}$~\cite{Arro} & Finite sets and bijections & Finite sets and injections \\
    \hline 
    Symplectic (Part~\ref{part: non-compact}) & Finite-dimensional symplectic vector spaces with symplectic isomorphisms & Finite-dimensional symplectic vector spaces with symplectic linear maps \\
    \hline
    Quaternion (Part~\ref{part: quaternion calc}) & Finite-dimensional quaternion inner product spaces with linear isometric isomorphisms & Finite-dimensional quaternion inner product spaces with linear isometric embeddings \\
    \hline 
    \end{tabular}
\end{center}

Denote by $\sf{G}^\mbf{S}$ the groupoid of symplectic vector spaces and symplectic isomorphisms and denote by $\es{J}^\mbf{S}$ Qullien's bracket construction on $\sf{G}^\mbf{S}$, as above. Similarly denote by $\sf{G}^\mbf{U}$ the groupoid of Hermitian inner product spaces and linear isometric isomorphisms, and by $\es{J}^\mbf{U}$ the corresponding bracket construction.  There is a functor $\Im: \sf{G}^\mbf{U} \to \sf{G}^\mbf{S}$ which sends a Hermitian inner product space to its underlying real vector space with symplectic structure given by the imaginary part of the Hermitian inner product. On morphism spaces this functor is little more than the inclusion of the unitary group $U(n)$ into the non-compact symplectic group $\mathsf{Sp}(2n,\R)$. A routine linear algebra exercise exhibits $U(n)$ as the maximal compact subgroup of $\mathsf{Sp}(2n, \R)$, and hence the inclusion $U(n) \subseteq \mathsf{Sp}(2n,\R)$ is a homotopy equivalence, or said differently, the non-compact symplectic group deformation retracts onto the unitary group. From this, it follows that the functor $\Im: \sf{G}^\mbf{U} \to \sf{G}^\mbf{S}$ is an equivalence of $\infty$-groupoids, and hence induces an equivalence $\Im : \es{J}^\mbf{U} \to \es{J}^\mbf{S}$ of $\infty$-categories by Lemma~\ref{lem: equiv detected on groupoid cores}. This equivalence allows us to sidestep any linear algebra in the construction of symplectic calculus, producing a Weiss tower
\[\begin{tikzcd}
	&& F \\
	\cdots & {T_nF} & \cdots & {T_1F} & {T_0F}
	\arrow[from=2-4, to=2-5]
	\arrow[from=2-2, to=2-3]
	\arrow[from=2-3, to=2-4]
	\arrow[bend right=30, from=1-3, to=2-2]
	\arrow[from=2-1, to=2-2]
	\arrow[bend left=30, from=1-3, to=2-4]
	\arrow[bend left=20, from=1-3, to=2-5]
\end{tikzcd}\]
for any symplectic functor $F: \es{J}^\mbf{S} \to \T$. For any choice of category of vector spaces, the Weiss tower may be packaged as a functor
\[
\mathsf{Tow} : \Fun(\es{J}, \T) \longrightarrow \Fun(\mbf{Z}_{\geq 0}^\op,\Fun(\es{J}, \T)),
\]
and the equivalence between the symplectic and unitary indexing categories implies that the symplectic Weiss tower may be identified with the unitary Weiss tower in the following sense. 

\begin{xxthm}
For any symplectic functor $F$, there is an equivalence
    \begin{align*}
    \mathsf{Tow}^\mbf{S}(F) &\simeq \Im_! \mathsf{Tow}^\mbf{U}(\Im^\ast F),
    \end{align*}
where $\Im_!$ is the $(\infty$-categorical) inverse of pullback along $\Im: \es{J}^\mbf{U} \to \es{J}^\mbf{S}$.
\end{xxthm}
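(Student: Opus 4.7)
The approach is to leverage the equivalence of $\infty$-categories $\Im : \es{J}^\mbf{U} \to \es{J}^\mbf{S}$ established via Lemma~\ref{lem: equiv detected on groupoid cores} together with the fact that the Weiss tower is, by construction, an $\infty$-categorical construction. First, I would note that since $\Im$ is an equivalence of $\infty$-categories, pullback $\Im^\ast : \Fun(\es{J}^\mbf{S}, \T) \to \Fun(\es{J}^\mbf{U}, \T)$ is an equivalence of functor $\infty$-categories whose inverse is $\Im_!$. In particular, there is a natural equivalence $\Im_! \Im^\ast F \simeq F$ for any symplectic functor $F$.

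Next, I would establish that the Weiss tower functor $\mathsf{Tow}$ intertwines with Kan extension along equivalences of indexing $\infty$-categories. Concretely, each polynomial approximation $T_n$ is built from $\infty$-categorical (co)limits of diagrams controlled by the morphism spaces in the indexing category, so for any equivalence $\phi : \es{C} \to \es{D}$ of such indexing categories one obtains natural equivalences $T_n^{\es{D}}(\phi_! G) \simeq \phi_! T_n^{\es{C}}(G)$, and these assemble into a natural equivalence $\mathsf{Tow}^{\es{D}}(\phi_! G) \simeq \phi_! \mathsf{Tow}^{\es{C}}(G)$ of towers (where $\phi_!$ on the right acts levelwise). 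Applying this with $\phi = \Im$ and $G = \Im^\ast F$, and using $\Im_! \Im^\ast F \simeq F$, yields the chain of equivalences
\[
\mathsf{Tow}^\mbf{S}(F) \simeq \mathsf{Tow}^\mbf{S}(\Im_! \Im^\ast F) \simeq \Im_! \mathsf{Tow}^\mbf{U}(\Im^\ast F),
\]
which is the desired statement.

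The main obstacle is pinning down the compatibility of the abstract Weiss tower construction with equivalences of indexing $\infty$-categories at sufficient precision. The symplectic calculus has been set up specifically so as to depend only on the $\infty$-categorical data of $\es{J}^\mbf{S}$ through Quillen's bracket construction on its groupoid core, so this compatibility ought to be essentially formal: polynomial approximation is characterised by a universal property (either as a localisation at polynomial-detecting maps, or by iteratively applying the Weiss derivative $\tau_n$), and both of these characterisations transport along equivalences of $\infty$-categories. Once this formal invariance is recorded, the theorem reduces to the already-established equivalence $\Im$.
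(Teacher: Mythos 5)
Your proposal is correct and follows essentially the same route as the paper: reduce to the equivalence of functor $\infty$-categories induced by $\Im$, use the fact that $n$-polynomiality transports along this equivalence (Proposition~\ref{prop: generalized polynomial comparision} and Corollary~\ref{cor: compare poly with inverse}), and invoke the universal property of $T_n$ to identify the towers levelwise compatibly with the structure maps. The only cosmetic difference is that the paper checks the identification on the $\Im^\ast$ side and then applies the inverse $\Im_!$, whereas you work with $\Im_!$ directly; these are interchangeable since the two functors are mutually inverse.
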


\subsection*{Quaternion Weiss calculus}
For the version of Weiss calculus built from linear algebra over the quaternions, the key insight is that although the quaternions are not a field, quaternionic vector spaces support just enough linear algebra to
make all the standard constructions of Weiss~\cite{Weiss} go through. We obtain the following summary of the results in Part~\ref{part: quaternion calc}.

\begin{xxthm}
    A version of Weiss calculus exists for functors from quaternion inner product spaces to (pointed) spaces.
\end{xxthm}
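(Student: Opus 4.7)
The plan is to mirror Weiss's original construction of orthogonal calculus in~\cite{Weiss}, substituting the quaternions $\mathbb{H}$ for $\mathbb{R}$ throughout, and verifying the handful of linear-algebraic inputs that make the construction run. First, I would build the indexing category $\es{J}^{\mbf{Sp}}$ of finite-dimensional right quaternion inner product spaces, enriched in pointed spaces. The morphism object $\es{J}^{\mbf{Sp}}(V, W)$ should be the Thom space of the complement bundle over the Stiefel manifold $\mathsf{Mor}(V, W)$ of $\mathbb{H}$-linear isometric embeddings $V \hookrightarrow W$, with fiber $W \ominus f(V)$ over $f$. Making this definition requires verifying that (a) finite-dimensional quaternion inner product spaces admit orthogonal decompositions $W = f(V) \oplus (W \ominus f(V))$, which follows from a quaternion Gram--Schmidt process; (b) $\mathsf{Mor}(V, W)$ carries a smooth manifold structure, obtained by identifying it with a homogeneous space of the compact symplectic group $\mathsf{Sp}(W)$; and (c) composition of isometric embeddings together with direct sum of orthogonal complements assembles into an associative, unital enrichment over pointed spaces.

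Next, I would import Weiss's definitions of $n$-polynomial and $n$-homogeneous functors without essential modification. The polynomial approximation $T_n F$ is constructed as the sequential homotopy colimit of iterates of an endofunctor $\tau_n$ built by taking a suitable homotopy limit of $F(V \oplus \mathbb{H}^S)$ over nonempty $S \subseteq \{0, \ldots, n\}$. Convergence of this sequence to an $n$-polynomial functor, universality of the natural map $F \to T_n F$, and the resulting Weiss tower with homogeneous layers $D_n F = \fiber(T_n F \to T_{n-1} F)$ all follow formally from manipulations of homotopy (co)limits that are insensitive to the choice of normed division algebra beyond the inputs just listed.

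Finally, one exhibits the classification of $n$-homogeneous functors in terms of $\mathsf{Sp}(n)$-spectra by a formula of the shape
\[
D_n F(V) \simeq \Omega^\infty\bigl((\Theta_n \wedge S^{V^{\oplus n}})_{h \mathsf{Sp}(n)}\bigr)
\]
for an $\mathsf{Sp}(n)$-spectrum $\Theta_n$, adapting the diagonal argument of~\cite{Weiss,TaggartUnitary}. The expected main obstacle is not conceptual but bookkeeping: because $\mathbb{H}$ is noncommutative, conventions must be fixed for left versus right scalar multiplication, for the placement of conjugates in the Hermitian form, and for the bimodule structure on the complement bundles. Once these conventions are consistently chosen, each step of Weiss's real construction transfers with only notational changes, since $\mathbb{H}$ admits a norm, a conjugation, and an orthogonal decomposition with all the properties Weiss uses of $\mathbb{R}$.
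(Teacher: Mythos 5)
Your overall strategy is the same as the paper's: transcribe Weiss's orthogonal construction over $\mbf{H}$, verify the linear-algebraic inputs, and classify the homogeneous layers by $\mathsf{Sp}(n)$-spectra. But there is one concrete error in your description of the polynomial approximation. You define $\tau_n F(V)$ as a homotopy limit of $F(V\oplus \mbf{H}^S)$ over nonempty subsets $S\subseteq\{0,\dots,n\}$. That is the Goodwillie/cubical indexing, not the Weiss indexing: in Weiss calculus $\tau_nF(V)$ is the homotopy limit of $F(V\oplus U)$ over the \emph{topological} poset of all nonzero $\mbf{H}$-subspaces $U\subseteq\mbf{H}^{n+1}$ (equivalently, the non-initial part of the slice $\es{J}^\mbf{H}_{/\mbf{H}^{n+1}}$). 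With the discrete cubical indexing you would obtain a different notion of polynomial functor, and the homogeneous layers would carry an action of a symmetric-group-type object rather than of $\mathsf{Sp}(n)$; your own classification formula $D_nF(V)\simeq\Omega^\infty\bigl((\Theta_n\wedge S^{nV})_{h\mathsf{Sp}(n)}\bigr)$ is therefore inconsistent with the $\tau_n$ you wrote down. The topology on the indexing poset is not a technicality: it forces one to make sense of homotopy limits over topologically internal categories (the paper uses left fibrations of complete Segal spaces for this), and it is the reason $\mathsf{Sp}(n)$ appears at all.

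Relatedly, you describe the proof that $T_nF$ is $n$-polynomial and universal as ``formal manipulations of homotopy (co)limits insensitive to the choice of normed division algebra.'' It is not formal: the crux of Weiss's argument (his \S4 and the erratum) is the identification of the sphere bundle $S\gamma^\mbf{H}_{n+1}(V,W)$ with the homotopy colimit $\hocolim_{0\neq U\subseteq\mbf{H}^{n+1}}\es{J}^\mbf{H}(U\oplus V,W)$ together with connectivity estimates for quaternion Stiefel manifolds. Carrying that over to $\mbf{H}$ requires genuine input beyond left/right conventions: the argument uses a spectral decomposition of self-adjoint operators, and over $\mbf{H}$ the eigenspace of a non-real eigenvalue is only a real subspace. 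One must check that self-adjoint $\mbf{H}$-linear operators have real eigenvalues with $\mbf{H}$-linear eigenspaces; this is the specific point where noncommutativity could break the construction and is the item your verification list should contain. With the indexing category corrected and this spectral-theoretic point checked, the rest of your outline matches the paper's construction.
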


The layers of the Weiss tower are \emph{homogeneous of degree $n$}, and we can classify such functors in terms of spectra with an action of $\mathsf{Sp}(n)=\Aut(\mbf{H}^n)$ completely analogously to how homogeneous functors of degree $n$ in orthogonal calculus are classified by spectra with an action of $O(n)$.

Through this classification we can compare quaternion calculus with both unitary and orthogonal calculus. In particular, we can construct a functor from unitary calculus to compact symplectic calculus which preserves Weiss towers: there is a functor $h: \es{J}^\mbf{H} \to \es{J}^\mbf{U}$, right adjoint to the extension-of-scalars functor $\mbf{H} \otimes_\mbf{C}(-)$, which by precomposition induces a functor 
\[
h^\ast: \Fun(\es{J}^\mbf{U}, \T_\ast) \longrightarrow \Fun(\es{J}^\mbf{H}, \T_\ast),
\]
through which we compare quaternion calculus and unitary calculus. 

\begin{xxthm}\label{thm: calc def retracts}
Let $F$ be a unitary functor. There is a levelwise equivalence
    \[
    h^\ast \mathsf{Tow}^\mbf{U}(F) \simeq \mathsf{Tow}^\mbf{H}(h^\ast F). 
    \]
\end{xxthm}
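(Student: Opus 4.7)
The plan is to compare the two Weiss towers level by level, establishing $h^\ast T_n^\mbf{U} F \simeq T_n^\mbf{H}(h^\ast F)$ for every $n$ via an adjunction argument. Write $\pi := \mbf{H} \otimes_\mbf{C}(-): \es{J}^\mbf{U} \to \es{J}^\mbf{H}$ for the extension-of-scalars functor, so that $\pi \dashv h$. A standard manipulation of the unit and counit promotes this to an adjunction $h^\ast \dashv \pi^\ast$ between the functor categories $\Fun(\es{J}^\mbf{U}, \T_\ast)$ and $\Fun(\es{J}^\mbf{H}, \T_\ast)$; in particular $h^\ast$ preserves all colimits.

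The key step is to show that both $h^\ast$ and $\pi^\ast$ preserve the $n$-polynomial condition. Under the characterisation of $n$-polynomial functors as those sending strongly cocartesian $(n+1)$-cubes in the indexing category to cartesian cubes in pointed spaces, this reduces to the observation that both $\pi$ and $h$ preserve such cubes. Both functors preserve direct sums ($\pi$ is a left adjoint and $h$ is the additive underlying-complex-vector-space functor) and send nonzero objects to nonzero objects, so strongly cocartesian cubes of the form $I \mapsto V \oplus \bigoplus_{i \in I} U_i$ with $U_i$ nonzero are sent to cubes of the same form.

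Given these preservation facts, the levelwise equivalence follows from the universal property. Since $h^\ast T_n^\mbf{U} F$ is quaternion $n$-polynomial, it suffices to check that for every quaternion $n$-polynomial functor $H$, the natural map induces an equivalence of mapping spaces $\Map(h^\ast T_n^\mbf{U} F, H) \simeq \Map(h^\ast F, H)$. By the adjunction $h^\ast \dashv \pi^\ast$, this is equivalent to asking that
\[
\Map(T_n^\mbf{U} F, \pi^\ast H) \longrightarrow \Map(F, \pi^\ast H)
\]
be an equivalence, which is exactly the universal property of $T_n^\mbf{U}$ applied to the unitary $n$-polynomial functor $\pi^\ast H$. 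Naturality in $n$ assembles these levelwise equivalences compatibly with the tower structure maps to yield $h^\ast \mathsf{Tow}^\mbf{U}(F) \simeq \mathsf{Tow}^\mbf{H}(h^\ast F)$.

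The main obstacle will be making the preservation of strongly cocartesian cubes precise in the specific formalism used for the polynomial condition in this paper. Depending on the exact formulation, one might alternatively verify preservation directly from the explicit construction of $T_n$ as the sequential colimit of the $\tau_n$-iterates, using that $h^\ast$ commutes with this colimit because it is a left adjoint. Either way, once the preservation step is in place the remainder of the argument is purely formal.
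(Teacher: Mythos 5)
Your strategy has two concrete gaps, either of which is fatal. First, the preservation of $n$-polynomiality cannot be extracted from cubes: in Weiss calculi the degree $\leq n$ condition is defined by a homotopy limit over the \emph{topological} poset of nonzero subspaces of $\mbf{k}^{n+1}$, and there is no available characterisation in terms of strongly cocartesian $(n+1)$-cubes (that is the Goodwillie-calculus notion of $n$-excisive, not the Weiss one). The fact that $h^\ast$ preserves $n$-polynomial functors is genuinely nontrivial\textemdash it is Lemma~\ref{lem: h preserves reduced polynomials}, proved by an induction up the tower using the classification of homogeneous functors and the fibration sequences $T_nF \to T_{n-1}F \to R_nF$, first for reduced functors and then in general. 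Second, and more seriously, your universal-property step needs $\pi^\ast H = H(\mbf{H}\otimes_\mbf{C}-)$ to be unitary $n$-polynomial whenever $H$ is quaternion $n$-polynomial, and this is false: since $\mbf{H}\otimes_\mbf{C}W \cong W\oplus \overline{W}$ has complex dimension $2\dim_\mbf{C}W$, restriction along $\mbf{H}\otimes_\mbf{C}(-)$ sends a quaternion $n$-homogeneous functor $V\mapsto \Omega^\infty[(S^{nV}\wedge\Theta)_{h\mathsf{Sp}(n)}]$ to a unitary $2n$-homogeneous functor (equivalently, the $\Omega^{4n}$-periodicity of the quaternion derivative becomes $S^{4n}$-periodicity per \emph{complex} increment, which is degree $2n$ behaviour). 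So $\pi^\ast H$ is in general only $2n$-polynomial, the map $\Map(T_n^\mbf{U}F,\pi^\ast H)\to \Map(F,\pi^\ast H)$ need not be an equivalence, and your argument would at best relate $T_n^\mbf{H}(h^\ast F)$ to $h^\ast T_{2n}^\mbf{U}F$. Your fallback of commuting $h^\ast$ past $\tau_n$ fails for the same reason: the subspaces $hU$ for $0\neq U\subseteq\mbf{H}^{n+1}$ form a very sparse subfamily of the nonzero complex subspaces of $\mbf{C}^{2n+2}$. (A further worry: $\mbf{H}\otimes_\mbf{C}(-)\dashv h$ fails as an adjunction of the isometry categories themselves, e.g.\ $\es{J}^\mbf{H}(\mbf{H}^2,\mbf{H})=\emptyset$ while $\es{J}^\mbf{U}(\mbf{C}^2,h\mbf{H})=U(2)$, so the adjunction $h^\ast\dashv\pi^\ast$ is not available in the form you use it.)

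The failure of a purely formal argument is consistent with how the result is actually proved. The body of the paper (Theorem~\ref{thm: h on towers} and its corollary, following~\cite[Theorem 5.5]{TaggartOCUC}) establishes $h^\ast(T_n^\mbf{U}F)\simeq T_n^\mbf{H}(h^\ast F)$ only for \emph{weakly polynomial} $F$, and the missing ingredient is exactly what that hypothesis buys: one shows that $F\to T_n^\mbf{U}F$ is an order-$n$ agreement, that $h^\ast$ preserves such agreements (a connectivity estimate, not an adjunction fact\textemdash this is where $\dim_\mbf{R}hV = \dim_\mbf{R}V$ matters), and that order-$n$ agreements become equivalences after applying $T_n^\mbf{H}$. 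Together with the preservation of $n$-polynomiality by $h^\ast$, this identifies $h^\ast T_n^\mbf{U}F$ with $T_n^\mbf{H}(h^\ast F)$ level by level.
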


\subsection*{Classification of quaternion vector bundles}
As an application of the theory of quaternion calculus we classify certain quaternion vector bundles over finite-dimensional cell complexes in a specified range. This extends work of Hu~\cite{Hu} from complex bundles to quaternion bundles, although or methods and ranges vary slightly. Our methods use concrete analyticity bounds for the functor $\mathsf{BSp}(-)$ which sends a quaternion vector space to the classifying space of the compact symplectic group $\mathsf{Sp}(V)=\mathsf{Aut}(V)$, and the observation that rank $r$ quaternion bundles over a finite-dimensional cell complex $X$ are classified by $[X, \mathsf{BSp}(r)] = \pi_0\Map_\ast(X, \mathsf{BSp}(r))$. Denote by $\mathsf{KSp}$ quaternionic topological $K$-theory, i.e., the version of topological $K$-theory represented by $\mathsf{BSp}$.

\begin{xxthm}\label{thm: E}
Let $X$ be a $d$-dimensional cell complex for which $\widetilde{\mathsf{KSp}}^{-1}(X)=0$. The set of stably trivial rank $r$ quaternion vector bundles over $X$ where $\frac{d+1}{8}\leq r < \frac{d-2}{4}$ is given by $\{X, \Sigma^3\mbf{H}P^\infty_r\}$, the set of stable maps from $X$ to the three-fold suspension of stunted quaternion projective space.
\end{xxthm}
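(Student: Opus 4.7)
The plan is to apply the quaternion Weiss tower to $F(V) := \mathsf{BSp}(V)$, show that it converges sufficiently rapidly at $V = \mbf{H}^r$, and identify the first layer with the infinite loop space of $\Sigma^\infty \Sigma^3 \mbf{H}P^\infty_r$.

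First I would translate the problem. Rank-$r$ quaternion vector bundles on $X$ are classified by $\pi_0 \Map_\ast(X, \mathsf{BSp}(r))$, and such a bundle is stably trivial precisely when its classifying map becomes null after composing with the stabilisation $\mathsf{BSp}(r) \to \mathsf{BSp}$. Thus the set in question equals $\pi_0 \Map_\ast(X, \es{F}_r)$, where $\es{F}_r := \fiber(\mathsf{BSp}(r) \to \mathsf{BSp})$. Viewing $F(V) = \mathsf{BSp}(V)$ as a quaternion functor on $\es{J}^\mbf{H}$, the zeroth Taylor approximation satisfies $T_0 F(V) \simeq \hocolim_k F(V \oplus \mbf{H}^k) \simeq \mathsf{BSp}$, so $\es{F}_r \simeq \fiber(F(\mbf{H}^r) \to T_0 F(\mbf{H}^r))$.

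Next I would establish concrete analyticity bounds for $F$. The stabilisation fibration $\mathsf{BSp}(V) \to \mathsf{BSp}(V \oplus \mbf{H})$ has fibre $\mathsf{Sp}(V \oplus \mbf{H}) / \mathsf{Sp}(V) \simeq S^{4 \dim_\mbf{H} V + 3}$, whose connectivity grows linearly in $\dim_\mbf{H} V$. Feeding this into the convergence criterion for the quaternion Weiss tower developed in Part~\ref{part: quaternion calc}, one obtains a quantitative estimate for the connectivity of $F \to T_1 F$, which at $V = \mbf{H}^r$ becomes at least $d$ precisely in the regime $\frac{d+1}{4} \leq r$. In this range, $\fiber(F \to T_0 F)(\mbf{H}^r)$ and the first homogeneous layer $D_1 F(\mbf{H}^r) := \fiber(T_1 F \to T_0 F)(\mbf{H}^r)$ induce identical sets of pointed homotopy classes out of $X$.

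Finally I would identify $D_1 F(\mbf{H}^r)$. By the classification of degree-$1$ homogeneous quaternion functors given earlier in Part~\ref{part: quaternion calc}, $D_1 F$ is determined by a spectrum with $\mathsf{Sp}(1)$-action; computing this derivative spectrum from $\mathsf{BSp}(\mbf{H}) = \mbf{H}P^\infty$ together with the representation-sphere structure of $S^{\mbf{H}}$ produces $D_1 F(\mbf{H}^r) \simeq \Omega^\infty \Sigma^\infty \Sigma^3 \mbf{H}P^\infty_r$, whereupon the statement follows. The main obstacle will be the quantitative analyticity estimate: pinning down the constants so that the threshold comes out to exactly $\frac{d+1}{4} \leq r$ rather than a weaker inequality, and correctly tracking the three-dimensional twist (coming from $\mathsf{Sp}(1) = S^3$) that produces the $\Sigma^3$.
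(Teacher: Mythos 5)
Your proposal is correct and takes essentially the same route as the paper: compute $\partial_1\mathsf{BSp}\simeq\Sigma^3\mathbb{S}$ from the fibration $S^{4r+3}\to\mathsf{BSp}(r)\to\mathsf{BSp}(r+1)$ to identify $D_1\mathsf{BSp}(\mbf{H}^r)\simeq\Omega^\infty\Sigma^\infty\Sigma^3\mbf{H}P^\infty_r$, establish convergence and the connectivity bound $\mathsf{Conn}(D_n\mathsf{BSp}(r))\geq 4nr-1$ via the weakly polynomial formalism, and use finite-dimensionality of $X$ to see that the tower of $\Map_\ast(X,\mathsf{BSp}(-))$ stabilises on $\pi_0$ at the first stage in the stated range. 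The only cosmetic difference is that the paper packages the last step as the Weiss tower of the auxiliary functor $\mathsf{BSp}^X(-)=\Map_\ast(X,\mathsf{BSp}(-))$, which it shows agrees with $\Map_\ast(X,-)$ applied to the tower of $\mathsf{BSp}(-)$.
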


\subsection*{The use of $\infty$-categories}
In this paper we will work with multiple models for $\infty$-categories and functors between them. Typically by an $\infty$-category we will mean a quasicategory, and we aim to make clear when we are using another model (for example, complete Segal spaces). For brevity, we will often refer to homotopy (co)limits in any chosen model for $\infty$-categories simply as (co)limits, unless confusion is likely to occur. In particular all (co)fiber sequences are homotopy (co)fiber sequences.

Many of our $\infty$-categories will come to us from model categories: let $\es{M}$ be a simplicial model category. The underlying $\infty$-category $\es{M}_\infty=N^h(\es{M}^\circ)$ presented by $\es{M}$ is obtained by taking the homotopy coherent nerve of its full simplicial subcategory of bifibrant objects. The same is true at the level of adjunctions: a simplicial Quillen adjunction 
\[
\adjunction{F}{\es{M}}{\es{N}}{G},
\]
induces an adjunction of $\infty$-categories
\[
\adjunction{F}{\es{M}_\infty}{\es{N}_\infty}{G},
\]
which is an equivalence if the Quillen adjunction is a Quillen equivalence. For more details and for slight generalizations see for example,~\cite{Mazel-GeeAdjunctions}. Note that one can replace ``simplicial'' with ``topological'' above by virtue of~\cite[Corollary 2.7]{Ilias}, and an analogous procedure will send a topological model category to an $\infty$-category. Unless confusion is likely to occur, we will drop the subscript $\infty$ from our notation.

We will denote by $\es{S}$ the $\infty$-category of spaces, and we will denote by $\sf{Top}$ the category of $\Delta$-generated spaces\footnote{The category of $\Delta$-generated spaces is a convenient model for spaces, for details see for example,~\cite{DuggerDeltaGeneratedSpaces}.} equipped with the Quillen model structure, so that one model for $\es{S}$ is $\sf{Top}_\infty$. Given a small topological category $\es{J}$, we denote by $\Fun(\es{J}, \es{S})$ the $\infty$-category of functors from $\es{J}$ to spaces. There are many models for this $\infty$-category, including the underlying $\infty$-category of the projective model structure on the category of topologically enriched functors from $\es{J}$ to $\sf{Top}$. This model categorical presentation is (simplicially) Quillen equivalent to the projective model structure on the category of simplicially enriched functors from $\sing(\es{J})$ to simplicial sets, where $\sing(\es{J})$ is the simplicially enriched category with objects the objects of $\es{J}$ and morphism simplicial set given by applying the singular simplicial set functor to the morphism topological spaces of $\es{J}$. In~\cite[\S5.1.1]{HTT}, Lurie provides other models for this $\infty$-category, including as the mapping space $\Fun(N^h(\es{J}), \T)$ from the homotopy coherent nerve of $\es{J}$ to the $\infty$-category $\T$, and the homotopy coherent nerve of the category of left fibrations over $N^h(\es{J})$. 

We will often need to talk about homotopy (co)limits of internal presheaves, i.e., functors from a topologically (resp. simplicially) internal category to the category of topological spaces (resp. simplicial sets). This is where complete Segal spaces, either topological or simplicial, enter the picture as the topologically (resp. simplicially) internal nerve of a topologically (resp. simplicially) internal category is a (non-Reedy fibrant) complete Segal space. The complete Segal model structures on simplicial topological spaces and bisimplicial sets are (simplicially) Quillen equivalent and hence we will drop the distinction between topological and simplicial, for more on this see~\cite{CarrTaggartcolimits}.

We will model functors out of a Segal space $B$ by using the left fibration model structure on the category of Segal spaces over $B$. In the simplicial setting this is worked out in detail by Boavida de Brito~\cite{BdB}, and we provide the analogous considerations in the topological settings in~\cite{CarrTaggartcolimits}. Note that if $B$ is non-Reedy fibrant, then the left fibration model structure over $B$ is Quillen equivalent to the left fibration model structure over a Reedy fibrant replacement of $B$. By~\cite[Theorem 1.22]{BdB} there is an equivalence between left fibrations of complete Segal spaces over $B$ and left fibrations of quasicategories over the underlying quasicategory of $B$. For geometric applications like ours it is often more convenient to work on the complete Segal side of this equivalence.

\subsection*{Acknowledgements}

This work had greatly benefited from numerous conversations with Kaya Arro, and from helpful discussions with Oscar Randal-Williams, Steffen Sagave and Michael Weiss. We are also grateful to Maxwell Johnson for pointing out an error in the statement of Theorem~\ref{thm: E}. MC gratefully acknowledges partial support from NSF-DMS \#1547357. NT was supported by the European Research council (ERC) through the grant “Chromatic homotopy theory of spaces”, grant no. 950048 and by the Nederlandse Organisatie voor Wetenschappelijk Onderzoek (Dutch Research Council) Vidi grant no VI.Vidi.203.004 in the final stages of this project. This material is based upon work supported by the Swedish Research Council under grant no.2016-06596 while NT was in residence at Institut Mittag-Leffler in Djursholm, Sweden as part of the program “Higher algebraic structures in algebra, topology and geometry” in 2022. NT would like to thank the Isaac Newton Institute for Mathematical Sciences, Cambridge, for support and hospitality during the programme \emph{Equivariant homotopy theory in context}, where work on this paper was undertaken.

\part{Symplectic Weiss calculus}\label{part: non-compact} 
In the first part of this paper we concentrate on the symplectic Weiss calculus, i.e., Weiss calculus built on the category of symplectic vector spaces  and symplectic linear maps. We show through a general discussion on the relationship between Weiss calculus and Quillen's bracket construction, that symplectic calculus deformation retracts onto unitary calculus. 

\section{Weiss calculus and groupoid cores}
\label{sec: groupoid cores}

Given a monoidal groupoid, Quillen's bracket construction provides a small category, for details see for example,~\cite{Grayson} or~\cite[\S1]{RWW}. In this section we observe that the indexing categories (i.e., the categories of vector spaces) which govern the many variants of Weiss calculus are naturally of this form, a relationship that the second author first learned from O. Randal-Williams. From this observation we show that Weiss calculus is completely determined by the homotopy type of the groupoid core of the indexing category.

\subsection{Quillen's bracket construction}
Let $(\sf{G}, \oplus, 0)$ be a topologically enriched monoidal groupoid, i.e., the tensor product lifts to map
\[
\oplus: \Hom_\sf{G}(A,B) \times \Hom_\sf{G}(C,D) \to \Hom_\sf{G}(A \oplus C, B \oplus D),
\]
and, in particular, it is an enriched functor. Define $\langle \sf{G}, \sf{G} \rangle$ to be the category with the same objects as $\sf{G}$ and a morphism $[X,f]: A \to B$ in $\langle \sf{G}, \sf{G} \rangle$ is an equivalence class of a pair $(X,f)$ where $X \in \sf{G}$ and $f: X \oplus A \to B$, under the equivalence relation $(X,f) \sim (X',f')$ if and only if there exists and isomorphism $g: X \to X'$ in $\sf{G}$ such that the diagram
\[\begin{tikzcd}
X \oplus A \arrow[r, "f"] \arrow[d, "g \oplus A"'] & B \\
X' \oplus A \arrow[ur, "f'"']
\end{tikzcd}\]
commutes. The composite
\[
A \xrightarrow{[X,f]} B \xrightarrow{[Y, g]} C
\]
is the map
\[
A \xrightarrow{[Y \oplus X, ~g \circ (Y \oplus f)]} C.
\]
The category $\langle \sf{G}, \sf{G} \rangle$ is  enriched in spaces having mapping spaces defined by
\[
\Hom_{\langle \sf{G}, \sf{G} \rangle}(A,B) = \underset{X \in \sf{G}}{\colim}~\Hom_\sf{G}(X \oplus A, B). 
\]

\begin{rem}
For our examples of interest, the groupoid $\sf{G}$ with always be symmetric monoidal. In particular, in our examples, we will not have to take care over which side we let the groupoid act on itself in order to form the bracket category. In~\cite{RWW}, Randal-Williams and Wahl make no symmetric assumption and always have the groupoid acting on the left, or impose what they call \emph{pre-braiding}.
\end{rem}

\begin{exs}\label{exs: groupoid cores and S-1S}\hspace{10cm}
\begin{enumerate}
    \item If $\sf{G} = \Sigma$ the groupoid of finite sets and bijections (with the discrete topology), then $\langle \sf{G}, \sf{G} \rangle \cong \sf{FI}$ is the category of finite sets and injections. A version of orthogonal calculus based on $\sf{FI}$ was studied by Arro~\cite{Arro}.
    \item If $\sf{G} = \sf{G}^\mbf{O}$ the groupoid of finite-dimensional real inner product spaces and linear isometric isomorphisms, then $\langle \sf{G}, \sf{G} \rangle \cong \es{J}^\mbf{O}$ is the category of finite-dimensional real inner product spaces and linear isometric embeddings. Calculus built on $\es{J}^\mbf{O}$ is the orthogonal calculus of Weiss~\cite{Weiss}. 
   \item  If $\sf{G} = \sf{G}^\mbf{U}$ the groupoid of finite-dimensional complex inner product spaces and linear isometric isomorphisms, then $\langle \sf{G}, \sf{G} \rangle \cong \es{J}^\mbf{U}$ is the category of finite-dimensional complex inner product spaces and linear isometric embeddings. Unitary calculus in the sense of the second author~\cite{TaggartUnitary} is the Weiss calculus based on $\es{J}^\mbf{U}$.
  \item  If $\sf{G} = \sf{G}^\mbf{S}$ the groupoid of finite-dimensional symplectic vector spaces and symplectic isomorphisms, then $\langle \sf{G}, \sf{G} \rangle \cong \es{J}^\mbf{S}$ is the category of finite-dimensional symplectic vector spaces and symplectic linear maps. In this first part of the paper, we will construct a Weiss calculus on this category 
  \item if $\sf{G} = \sf{G}^\mbf{H}$ is the groupoid of finite-dimensional quaternion inner product spaces and isometric isomorphisms, then $\langle \sf{G}, \sf{G} \rangle \cong \es{J}^\mbf{H}$ is the category of finite-dimensional quaternion inner product spaces with linear isometric embeddings. In Part~\ref{part: quaternion calc} we will construct a version of Weiss calculus based on $\es{J}^\mbf{H}$. 
    \end{enumerate}
\end{exs}
\begin{proof}
In all cases, the proof is analogous to that provided by Randal-Williams and Wahl~\cite[\S5.1]{RWW} in the case of symmetric groups.
\end{proof}

\subsection{Groupoid cores}
Under some mild hypotheses the monoidal groupoid $\sf{G}$ is the ($\infty$-)groupoid core of the category $\langle \sf{G}, \sf{G} \rangle$. We first recall the notion of a Dwyer-Kan equivalence of topologically enriched categories.

\begin{definition}
    A functor $F: \es{C} \to \es{D}$ of topologically enriched categories is a \emph{Dwyer-Kan} equivalence if the following two conditions hold.
    \begin{enumerate}
        \item $F$ is \textit{fully faithful}\textemdash that is, the morphism
        \[
        \es{C}(X,Y) \longrightarrow \es{D}(F(X), F(Y)),
        \]
        is a weak homotopy equivalence;  
        \item $F$ is \textit{homotopically essentially surjective}\textemdash that is, the induced functor
        \[
        \pi_0F : \pi_0\es{C} \longrightarrow \pi_0\es{D},
        \]
        on homotopy categories is essentially surjective.
    \end{enumerate}
\end{definition}

To identify the ($\infty$-)groupoid core with the groupoid $\sf{G}$ we will say a monoidal category $(\mathsf{C},\oplus,0)$ is \emph{cancellative} if whenever $X\oplus A\cong Y\oplus A$, then $X\cong Y$, and \emph{has no zero divisors} if whenever $C\oplus C'\cong 0$, then $C \cong 0\cong C'$. These are properties that will be satisfied  by all of the groupoids we consider. 

\begin{lem}\label{lem: groupoid is core}
Let $G$ be a topologically enriched monoidal groupoid. There is an enriched functor 
\[
\mathsf{I}: \sf{G} \longrightarrow \langle \sf{G}, \sf{G} \rangle^\mathsf{core}, \ (f: A \to B) \longmapsto [0, f],
\]
which 
\begin{enumerate}
    \item is faithful if $\Aut_\sf{G}(0) = \{\id\}$,
    \item is full if $\sf{G}$ has no zero divisors,
    \item exhibits $\sf{G}$ as the groupoid core of $\langle \sf{G}, \sf{G} \rangle$\textemdash that is, is an equivalence\textemdash whenever $\Aut_\sf{G}(0) = \{\id\}$ and $\sf{G}$ has no zero divisors,
    \item induces an equivalence on homotopy categories whenever $\sf{G}$ is cancellative with no zero divisors and $\Aut_\sf{G}(0)=\id$. In particular, the isomorphisms in $\pi_0\langle \sf{G}, \sf{G} \rangle$ are isomorphisms in $\langle \sf{G}, \sf{G} \rangle$ and hence $(\pi_0\langle\sf{G},\sf{G}\rangle)^{\sf{core}}\cong \pi_0(\langle\sf{G},\sf{G}\rangle^{\sf{core}})$, and
    \item is a Dwyer-Kan equivalence whenever $\sf{G}$ is cancellative with no zero divisors and $\Aut_\sf{G}(0)=\id$.
\end{enumerate}
\end{lem}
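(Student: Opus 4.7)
The plan is to unpack the equivalence relation defining the morphism sets of $\langle\sf{G},\sf{G}\rangle$ and address each of the five claims in order of increasing subtlety.

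For faithfulness in~(1), we test $[0,f]=[0,f']$ against the defining equivalence relation on $\langle\sf{G},\sf{G}\rangle$: this produces an isomorphism $g\colon 0\to 0$ in $\sf{G}$ with $f'\circ(g\oplus\id_A)=f$. The hypothesis $\Aut_\sf{G}(0)=\{\id\}$ then forces $g=\id_0$, hence $f=f'$. For fullness in~(2), we start with an isomorphism $[X,f]\colon A\to B$ in the core; an inverse $[Y,g]$ yields the identity $[Y\oplus X,\,g\circ(Y\oplus f)]=[0,\id_A]$, which by the equivalence relation supplies an isomorphism $Y\oplus X\cong 0$ in $\sf{G}$. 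The no-zero-divisors hypothesis then gives $X\cong 0\cong Y$, and for any iso $h\colon X\to 0$ one has $[X,f]=[0,\,f\circ(h^{-1}\oplus\id_A)]$, with the representing morphism again an isomorphism in $\sf{G}$.

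For~(3), parts~(1) and~(2) combine to show that $\mathsf{I}$ induces a bijection between $\Hom_\sf{G}(A,B)$ and the iso-components of $\Hom_{\langle\sf{G},\sf{G}\rangle}(A,B)$; unwinding the colimit presentation of the latter verifies this bijection is a homeomorphism. Essential surjectivity on the nose is automatic since $\sf{G}$ and $\langle\sf{G},\sf{G}\rangle$ share their object class, giving the claimed enriched equivalence onto the core. For~(4), the content is the parenthetical statement: under the additional cancellative hypothesis, every isomorphism class in $\pi_0\langle\sf{G},\sf{G}\rangle$ lifts to a genuine isomorphism. Given a homotopy inverse $[Y,g]$ to $[X,f]$, a path in $\Hom_{\langle\sf{G},\sf{G}\rangle}(A,A)$ between $[Y\oplus X,\,g\circ(Y\oplus f)]$ and $[0,\id_A]$ unwinds through the colimit description to an isomorphism $Z\oplus Y\oplus X\cong Z$ in $\sf{G}$ after stabilising by some object $Z$. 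Cancellativity then produces $Y\oplus X\cong 0$, no zero divisors forces $X\cong 0\cong Y$, and the recipe of~(2) upgrades $[X,f]$ to an isomorphism in $\langle\sf{G},\sf{G}\rangle$. Combined with~(3), this yields the claimed equivalence of homotopy categories.

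Finally, for~(5), a Dwyer--Kan equivalence requires a weak equivalence on mapping spaces together with essential surjectivity on homotopy categories; the former is implied by the homeomorphism of~(3) and the latter by~(4). The main obstacle will be~(4), where one must carefully trace a path in the colimit-topology mapping space to a single stabilised isomorphism in $\sf{G}$ and invoke cancellativity at exactly the right step; the remaining parts are routine unravellings of the equivalence relation on pairs $(X,f)$.
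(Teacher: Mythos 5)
Your argument is correct and follows essentially the same route as the paper, which delegates (1)--(3) to the corresponding statement of Randal-Williams--Wahl and proves (4) by exactly your cancellativity argument. The only simplification worth noting is that in (4) no path-tracing through the colimit is needed: any morphism $[W,h]\colon A\to A$ in $\langle\sf{G},\sf{G}\rangle$ already has $h\colon W\oplus A\to A$ an isomorphism in the groupoid $\sf{G}$, so cancellativity gives $W\cong 0$ directly, and applying this to the composite $[Y\oplus X,\,g\circ(Y\oplus f)]$ yields $Y\oplus X\cong 0$.
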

\begin{proof}
Continuity of $\sf{I}$ follows as it is given on arrows as the composite 
\[
\Hom_{\sf{G}}(A,B)\cong \Hom_{\sf{G}}(0\oplus A,B)\to \Hom_{\langle\sf{G},\sf{G}\rangle}(A,B),
\]
using the colimit structure map and the unit isomorphism. Statements $(1)$\textendash$(3)$ follow from \cite[Proposition 1.7]{RWW}. For $(4)$, we first show the isomorphisms in the homotopy category are represented by isomorphisms of $\langle\sf{G},\sf{G}\rangle$. By $(3)$, all such isomorphisms take the form $[0,f]$ under the equivalence $\sf{I}$.

If $[X,f]\colon A\to B$ is an isomorphism in $\pi_0\langle\sf{G},\sf{G}\rangle$, then there exists $[Y,g]\colon B\to A$ for which there are paths $[Y\oplus X,g\circ (Y\oplus f)]\simeq [0,\id_A]$ and $[X\oplus Y,f\circ (X\oplus g)]\simeq [0,\id_B]$ in $\Hom_{\langle\sf{G},\sf{G}\rangle}(A,A)$ and $\Hom_{\langle\sf{G},\sf{G}\rangle}(B,B)$, respectively. Since both composites $[X,f]\circ[Y,g]$ and $[Y,g]\circ[X,f]$ are self-maps in $\langle\sf{G},\sf{G}\rangle$, the cancellative property implies $X\oplus Y\cong Y\oplus X\cong 0$ and since there are no zero divisors, we must have $X\cong Y\cong 0$. But this means that $[X,f]$ and $[Y,g]$ are equivalent to maps of the form $[0,F]$ and $[0,G]$, respectively, as desired. Hence, all isomorphisms in $\pi_0\langle \sf{G},\sf{G}\rangle$  are represented by isomorphisms of $\langle \sf{G},\sf{G}\rangle$ and thus $\sf{G}$. In particular, this implies that $(\pi_0\langle\sf{G},\sf{G}\rangle)^{\sf{core}}\cong \pi_0(\langle\sf{G},\sf{G}\rangle^{\sf{core}})$. $(5)$ follows from $(4)$ and the additional observation that, under our hypotheses, $\sf{I}$ also induces equivalences between mapping spaces.
\end{proof}

The modest assertions of the preceding lemma belie its more far-reaching implications, as suggested by following illustrative examples.

\begin{ex}
In the examples of Example~\ref{exs: groupoid cores and S-1S} the groupoid $\sf{G}$ is the groupoid core of $\langle \sf{G}, \sf{G}\rangle$.
\end{ex}

\begin{rem}
When $\sf{G}$ is a topologically enriched cancellative monoidal groupoid, Lemma \ref{lem: groupoid is core}(4) implies that a topologically enriched functor $F\colon \mathcal{C}\to \langle\sf{G},\mathsf{G}\rangle$ is homotopically essentially surjective if and only if $F$ essentially surjective. 
\end{rem}

\subsection{Functors on groupoid cores}
In the original incarnation of Weiss calculus, one studies functors from $\es{J}^\mathbf{O}$ to spaces, which by Example~\ref{exs: groupoid cores and S-1S} is precisely studying functors from $\langle \mathsf{G}^\mathbf{O}, \mathsf{G}^\mathbf{O}\rangle$ to spaces. Our goal of rephrasing Weiss calculus in terms of these homogeneous categories is to invert a functor of topological groupoids and hence a functor on the associated homogeneous categories.

\begin{lem}\label{lem: equiv of diagrams induces equiv of functors}
Let $\es{J}_1$ and $\es{J}_2$ be small categories. If there is a Dwyer-Kan equivalence $\alpha : \es{J}_1 \to \es{J}_2$, then there is an adjunction
\[
\adjunction{\alpha_!}{\Fun(\es{J}_1, \T)}{\Fun(\es{J}_2, \T)}{\alpha^\ast},
\]
which is an equivalence of $\infty$-categories.
\end{lem}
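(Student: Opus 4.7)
The plan is to construct the adjunction at the model-categorical level and then invoke invariance of functor $\infty$-categories under equivalences in the source. After applying $\sing$ (using the Quillen equivalence between topologically and simplicially enriched functor categories recalled in the introduction), the restriction functor $\sing(\alpha)^\ast$ preserves objectwise weak equivalences and objectwise fibrations with respect to the projective model structures, so it is a right Quillen functor with left adjoint given by the enriched left Kan extension $\sing(\alpha)_!$. Passing to underlying $\infty$-categories produces the adjunction $(\alpha_!,\alpha^\ast)$ in the statement.

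To show this adjunction is an equivalence, the key input is that $\sing$ preserves Dwyer-Kan equivalences and that the homotopy coherent nerve $N^h$ sends Dwyer-Kan equivalences to equivalences of quasi-categories. Hence $\alpha$ induces an equivalence $\es{J}_1 \simeq \es{J}_2$ at the level of $\infty$-categories. Since $\Fun(-,\T)$ carries equivalences of $\infty$-categories to equivalences of $\infty$-categories (a bifunctoriality property of $\Fun$, equivalent under the translations of the introduction to the standard fact that the projective Quillen adjunction induced by a Dwyer-Kan equivalence is a Quillen equivalence), the restriction $\alpha^\ast$ is already an equivalence. Its left adjoint $\alpha_!$ is therefore also an equivalence, and the unit and counit of the adjunction are weak equivalences.

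The main technical point I expect to have to handle carefully is the bookkeeping between the various models for the functor $\infty$-category: topologically enriched functor categories with the projective model structure, their simplicial counterparts obtained via $\sing$, and the quasi-categorical $\Fun(N^h(\sing(\es{J}_i)),\T)$. These are all connected by the Quillen and categorical equivalences recalled in the introduction, so once a single model is fixed the statement reduces to the familiar invariance of $\Fun(-,\T)$ under equivalences of its source $\infty$-category, and no genuinely new input beyond the Dwyer-Kan hypothesis is needed.
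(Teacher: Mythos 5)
Your proposal is correct and follows essentially the same route as the paper: the paper's proof applies $\sing$ to get a Dwyer--Kan equivalence of simplicial categories, passes to a categorical equivalence of homotopy coherent nerves, and concludes by the invariance of $\Fun(-,\T)$ under categorical equivalences (HTT, Proposition 1.2.7.3(3)). Your additional model-categorical construction of the adjunction via the projective model structures is exactly what the paper records in the remark immediately following the lemma (via Gepner--Henriques), so the only bookkeeping you flag --- matching the underlying $\infty$-category of the projective model structure with $\Fun(N^h(\sing\es{J}),\T)$ --- is already handled by the model comparisons recalled in the paper's introduction.
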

\begin{proof}
The Dwyer-Kan equivalence $\alpha: \es{J}_1 \to \es{J}_2$ induces a Dwyer-Kan equivalence of simplicially enriched categories $\sing(\alpha): \sing(\es{J}_1) \to \sing(\es{J}_2)$, where for $i \in \{1,2\}$, $\sing(\es{J}_i)$ is the simplicially enriched category formed by applying $\sing$ to the morphism spaces of $\es{J}_i$. This in turn induced a categorical equivalence 
\[
N^h(\alpha): N^h(\es{J}_1) \longrightarrow N^h(\es{J}_2),
\]
on homotopy coherent nerves (where we have suppressed $\sing$ from the notation) and the result follows from~\cite[Proposition 1.2.7.3(3)]{HTT}, see also~\cite[\href{https://kerodon.net/tag/01E7}{Tag 01E7}]{kerodon}.
\end{proof}

% \begin{rem}
% We could also have chosen to model the $\infty$-category of functors $\Fun(\es{J}, \T)$ as the $\infty$-categorical localization of the projective model structure on the category of continuous functors $\Fun(\es{J}, \sf{Top})$, i.e., as the homotopy coherent nerve of the topological category of bifibrant objects. In this case, a Dwyer-Kan equivalence of topological categories corresponds to a weak equivalence of topological categories in the sense of Gepner and Henriques~\cite[Definition A.5]{GepnerHenriques}, and hence~\cite[Lemma A.6]{GepnerHenriques} provides a Quillen equivalence
% \[
% \adjunction{\alpha_!}{\Fun(\es{J}_1, \sf{Top})}{\Fun(\es{J}_2, \sf{Top})}{\alpha^\ast},
% \]
% between projective model structures and the statement of the above lemma becomes the $\infty$-categorical localization of this Quillen equivalence.
% \end{rem}

To reduce to groupoid cores, we will need to impose some modest hypotheses\textemdash which hold in all of the cases we will be interested in\textemdash to ensure good homotopical behaviour. Note that in conjunction with $\sf{G}$ having no zero divisors, the following hypothesis makes $\langle \sf{G} , \sf{G} \rangle$ into a homogeneous category in the sense of Randal-Williams and Wahl~\cite[Definition 1.3]{RWW}, see also~\cite[Theorem 1.10]{RWW}.

\begin{hypothesis}\label{hyp: groupoid}
Let $\sf{G}$ be a topologically enriched monoidal groupoid which is cancellative and be such that $\Aut(A) \to \Aut(A \oplus B)$ is injective for all objects $A, B \in \sf{G}$.
\end{hypothesis}

Under these assumptions, we have the following lemma.

\begin{lem}\label{lem: equiv detected on groupoid cores}
Let $\sf{G}_1$ and $\sf{G}_2$ satisfy Hypothesis~\ref{hyp: groupoid}. If there is a monoidal Dwyer-Kan equivalence $\alpha \colon \sf{G}_1 \to \sf{G}_2$, then the induced functor
\[
\langle \alpha \rangle: \langle \sf{G}_1, \sf{G}_1 \rangle \longrightarrow \langle \sf{G}_2, \sf{G}_2 \rangle,
\]
is a Dwyer-Kan equivalence.
\end{lem}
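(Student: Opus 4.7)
The plan is to verify the two defining conditions of a Dwyer--Kan equivalence for the functor $\langle \alpha \rangle$: essential surjectivity on $\pi_0$ and fully faithfulness on mapping spaces.

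Essential surjectivity is the easier half. The inclusions $\mathsf{I}_i\colon \sf{G}_i \to \langle \sf{G}_i, \sf{G}_i \rangle$ of Lemma~\ref{lem: groupoid is core} are identities on objects, and $\langle \alpha \rangle$ acts as $\alpha$ on objects. Hence $\pi_0 \langle \alpha \rangle$ is essentially surjective precisely when $\pi_0 \alpha$ is, which holds by hypothesis.

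For fully faithfulness, the strategy is to identify the Hom spaces of the bracket construction with coproducts of homotopy quotients and then exploit homotopy invariance of the Borel construction. Concretely, for $A, B \in \sf{G}$, I would first establish the decomposition
\[
\Hom_{\langle \sf{G}, \sf{G} \rangle}(A, B) = \underset{X \in \sf{G}}{\colim}\ \Hom_\sf{G}(X \oplus A, B) \;\cong\; \coprod_{[X] \in \pi_0 \sf{G}} \Hom_\sf{G}(X \oplus A, B) / \Aut_\sf{G}(X),
\]
where $\Aut_\sf{G}(X)$ acts on $\Hom_\sf{G}(X \oplus A, B)$ by precomposition with $g \oplus \id_A$. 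Hypothesis~\ref{hyp: groupoid} asserts that $\Aut_\sf{G}(X) \to \Aut_\sf{G}(X \oplus A)$ is injective; since every element of $\Hom_\sf{G}(X \oplus A, B)$ is an isomorphism in the groupoid $\sf{G}$, this implies the action is free, and hence the strict quotient already models the Borel construction. Applying the formula on both sides, the monoidal Dwyer--Kan equivalence $\alpha$ supplies a bijection $\pi_0 \sf{G}_1 \cong \pi_0 \sf{G}_2$, weak equivalences $\Aut_{\sf{G}_1}(X) \simeq \Aut_{\sf{G}_2}(\alpha(X))$, and, via monoidality together with the fully faithfulness of $\alpha$, weak equivalences
\[
\Hom_{\sf{G}_1}(X \oplus A, B) \simeq \Hom_{\sf{G}_2}(\alpha(X) \oplus \alpha(A), \alpha(B))
\]
that intertwine the precomposition actions. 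Homotopy invariance of the Borel construction then yields a weak equivalence on each summand, and summing over $[X]$ produces the required weak equivalence on Hom spaces.

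The main obstacle is the identification of the defining colimit with a disjoint union of (homotopy) quotients, together with the freeness verification; this is precisely where Hypothesis~\ref{hyp: groupoid} is used, and it is the topologically enriched counterpart of the discrete analysis in~\cite[\S1]{RWW}. Once this identification is in hand, everything else is formal naturality together with homotopy invariance of the Borel construction.
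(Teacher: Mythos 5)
Your proposal is correct and follows essentially the same route as the paper: both identify $\Hom_{\langle \sf{G}, \sf{G}\rangle}(A,B)$ with a quotient of $\Hom_{\sf{G}}(X\oplus A, B)$ by a free $\Aut_{\sf{G}}(X)$-action (freeness coming from the injectivity clause of Hypothesis~\ref{hyp: groupoid} applied to isomorphisms in the groupoid), argue that this strict quotient computes the homotopy colimit, and then transport everything along the equivalences supplied by $\alpha$. The only cosmetic difference is that the paper invokes cancellativity and finality of $B\Aut(X)\hookrightarrow \sf{G}(X)$ to isolate the single contributing component, whereas your coproduct decomposition over $\pi_0\sf{G}$ absorbs that step.
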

\begin{proof}
The functor $\langle\alpha\rangle$ is defined on objects as $\alpha$ is and inherits its enrichment from $\alpha$.
Note that the colimit defining $\Hom_{\langle\sf{G}_i,\sf{G}_i\rangle}(A_i,B_i)$ is equivalent, by the cancellative property, to the colimit taken over the full subgroupoid of $\sf{G}_i$ spanned by objects having the isomorphism type of $X_i\in\sf{G}_i$ where $X_i$ is such that $X_i\oplus A_i\cong B_i$\textemdash all other mapping spaces are necessarily empty and do not affect the colimit. Call this subcategory $\sf{G}_i(X)$ and let $B\Aut(X)$ denote the full subcategory of $\sf{G}_i(X)$ spanned by $X$. Since the inclusion $B\Aut(X)\to \sf{G}_i({X})$ is an equivalence, it is final, and we conclude that 
\[
\Hom_{\langle\sf{G}_i,\sf{G}_i\rangle}(A_i,B_i)\cong \mathop{\colim}\limits_{B\Aut(X_i)}\Hom_{\sf{G}_i}(X_i\oplus A_i,B_i)\ldotp
\]
Note that $\Aut(X_i)$ acts freely on $\Hom_{\sf{G}_i}(X_i\oplus A_i,B_i)$: either $\Hom_{\sf{G}_i}(X_i\oplus A_i,B_i)=\emptyset$, in which this is vacuous, or there is an isomorphism $X_i\oplus A_i\cong B_i$ and, in this latter case, if $f\in \Hom_{\sf{G}_i}(X_i\oplus A_i,B_i)$ and  $f(g\times \id)=f$. Then applying inverses, we must conclude that $g\times \id=\id$, and it must be that $g=\id$. It follows that the functor
\[
\Hom_{\sf{G}_i}(- \oplus A_i, B_i) : B\Aut(X_i) \longrightarrow \sf{Top},
\]
is projective cofibrant\textemdash that is, a cofibrant $\Aut(X_i)$-space\textemdash and, hence, this colimit is a homotopy colimit.

Now, the functor $\alpha$ defines for each $A,B \in \sf{G}_1$ an equivalence
\[
\alpha : \Hom_{\sf{G}_1}(A,B) \longrightarrow \Hom_{\sf{G}_2}(\alpha(A), \alpha(B)),
\]
and it follows that $\alpha$ induces a Dwyer-Kan equivalence $B\Aut(X)\to B\Aut(\alpha(X))$. This shows that $\langle \alpha \rangle$ is fully faithful. In other words, the induced map
\[
\underset{B\Aut(X)}{\colim}~\Hom_{\sf{G}_1}(X \oplus A, B) \xrightarrow{\ \simeq \ } \underset{B\Aut(\alpha(X))}{\colim}~\Hom_{\sf{G}_2}(\alpha(X)\oplus \alpha(A), \alpha(B)),
\]
is an equivalence. As for essential surjectivity of $\langle \alpha \rangle$, this follows since $\alpha$ is necessarily essentially surjective in the ordinary sense, the objects of $\sf{G}_i$ coincide with those of $\langle \sf{G}_i, \sf{G}_i \rangle$ for $i\in \{1,2\}$, and $\langle \alpha \rangle (X) = \alpha(X)$ for every $X \in \sf{G}_1$. 
\end{proof}

\begin{rem}
Let $\sf{G}$ be a topologically enriched monoidal groupoid satisfying the hypotheses of the situation considered above. While $\Hom_{\sf{G}}(-\oplus A,B)$ need not be projective cofibrant, if $Y$ is such that $\Hom(Y\oplus A,B)\ne\emptyset$, then, with the notation as above, it turns out that there are isomorphisms in the homotopy category of spaces
\[
\hocolim\limits_{X\in \sf{G}}\Hom_{\sf{G}}(X\oplus A,B)\cong \hocolim\limits_{X\in \sf{G}(Y)}\Hom_{\sf{G}}(X\oplus A,B)\cong \colim\limits_{B\Aut(Y)}\Hom_{\sf{G}}(Y\oplus A,B)\ldotp
\]
The first isomorphism appearing occurs since for $X\notin \sf{G}(Y)$, $\Hom_{\sf{G}}(X\oplus A,B)=\emptyset$\textemdash in terms of the classifying fibration, the total space is empty over the components of $\sf{G}(Z)$ with $Z\not\cong Y$. The second isomorphism follows since the inclusion $B\Aut(Y)\to\sf{G}(Y)$ is a Dwyer-Kan equivalence and the colimit appearing is the homotopy colimit, as we argued previously.
\end{rem}

\begin{cor}\label{cor: equivalence groupids equivalent functors}
Let $\sf{G}_1$ and $\sf{G}_2$ satisfy Hypothesis~\ref{hyp: groupoid}. If there is a monoidal Dwyer-Kan equivalence $\alpha : \sf{G}_1 \to \sf{G}_2$, then the induced adjunction
\[
\adjunction{\langle\alpha \rangle_!}{\Fun(\langle \sf{G}_1, \sf{G}_1 \rangle, \T)}{\Fun(\langle \sf{G}_2, \sf{G}_2 \rangle, \T)}{\langle\alpha\rangle^\ast},
\]
is an equivalence of $\infty$-categories.
\end{cor}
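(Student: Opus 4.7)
The plan is to obtain this corollary as a direct combination of the two preceding lemmas in the section. Since both of these results are already in place, the proof is really a matter of verifying that they compose in the expected way.

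First, I would invoke Lemma~\ref{lem: equiv detected on groupoid cores}, which takes the monoidal Dwyer-Kan equivalence $\alpha \colon \sf{G}_1 \to \sf{G}_2$ and produces a Dwyer-Kan equivalence $\langle \alpha \rangle \colon \langle \sf{G}_1, \sf{G}_1 \rangle \to \langle \sf{G}_2, \sf{G}_2 \rangle$ of topologically enriched categories. The fact that $\sf{G}_1$ and $\sf{G}_2$ both satisfy Hypothesis~\ref{hyp: groupoid} is precisely what is needed to apply that lemma.

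Second, I would feed this Dwyer-Kan equivalence of small topologically enriched categories into Lemma~\ref{lem: equiv of diagrams induces equiv of functors}, applied with $\es{J}_1 = \langle \sf{G}_1, \sf{G}_1 \rangle$ and $\es{J}_2 = \langle \sf{G}_2, \sf{G}_2 \rangle$. That lemma then produces the desired adjunction
\[
\adjunction{\langle\alpha\rangle_!}{\Fun(\langle \sf{G}_1, \sf{G}_1 \rangle, \T)}{\Fun(\langle \sf{G}_2, \sf{G}_2 \rangle, \T)}{\langle\alpha\rangle^\ast},
\]
and asserts that it is an equivalence of $\infty$-categories.

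There is no real obstacle here: the non-trivial content lies entirely in the two lemmas just cited, and once both are available, the corollary is immediate. The one point worth flagging explicitly in the write-up is that we should check that the adjunction produced by the abstract machinery of Lemma~\ref{lem: equiv of diagrams induces equiv of functors} is indeed the left Kan extension/restriction adjunction along $\langle \alpha \rangle$, rather than some other equivalence; this is a formality, since both constructions are induced by the same underlying functor of enriched categories, and under the quasicategorical model this is exactly the pullback/left Kan extension adjunction along $N^h(\langle \alpha \rangle)$ provided by~\cite[Proposition 1.2.7.3(3)]{HTT}.
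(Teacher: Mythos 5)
Your proposal is correct and is exactly the paper's proof: apply Lemma~\ref{lem: equiv detected on groupoid cores} to get that $\langle\alpha\rangle$ is a Dwyer-Kan equivalence, then conclude by Lemma~\ref{lem: equiv of diagrams induces equiv of functors}. The extra remark about identifying the adjunction with restriction/left Kan extension along $N^h(\langle\alpha\rangle)$ is a reasonable bit of added care but not something the paper dwells on.
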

\begin{proof}
The map $\langle \alpha \rangle : \langle \sf{G}_1, \sf{G}_1\rangle \to \langle \sf{G}_2, \sf{G}_2\rangle$ is a Dwyer-Kan equivalence by Lemma~\ref{lem: equiv detected on groupoid cores}. The result follows immediately from Lemma~\ref{lem: equiv of diagrams induces equiv of functors}.
\end{proof}

\subsection{Generalized polynomial functors} We introduce a slight generalization of polynomial functors. This generalization recovers the standard definition in the cases we are interested in, but allows for a slick comparison between polynomial functors in different calculi. Observe as in~\cite[p.3]{Grayson} or~\cite[Definition 1.3]{RWW} that the monoidal unit $0$ in an initial object in $\langle \sf{G}, \sf{G} \rangle$ and hence for any $X \in \langle \sf{G}, \sf{G} \rangle$, the unique map $0 \to X$ is initial in $(\langle \sf{G}, \sf{G} \rangle)_{/X}$.

\begin{definition}\label{def: polynomial}
Let $\sf{G}$ be a topologically enriched monoidal groupoid and let $X \in \sf{G}$. A functor $F: \langle \sf{G}, \sf{G} \rangle \to \T$ is \emph{$X$-polynomial} if for each $V \in \es{J}$, the canonical map
\[
F(V) \longrightarrow \underset{U \in (\langle \sf{G}, \sf{G} \rangle_{/X})_0}{\holim}~F(V \oplus U),
\]
is an equivalence, where $(\langle \sf{G}, \sf{G} \rangle_{/X})_0$ is the slice category $(\langle \sf{G}, \sf{G} \rangle_{/X})$ with initial object removed.
\end{definition}

\begin{rem}
The above (homotopy) limit is the $\infty$-categorical limit taken over the \emph{space} $\langle \sf{G}, \sf{G} \rangle_{/X}$. We will model the $\infty$-categorical limit as a homotopy limit in the following way. The category $\langle \sf{G}, \sf{G} \rangle$ is topologically enriched and hence the category $\langle \sf{G}, \sf{G} \rangle_{/X}$ is a topologically internal category i.e., consists of a space of objects and a space of morphisms which satisfy the standard category axioms through continuous maps of spaces. A functor from a topologically internal category to spaces is the data of a space over the space of objects together with an action (over the space of objects) of the space of morphisms. In particular every functor $F: \langle \sf{G}, \sf{G} \rangle \to \mathsf{Top}$ has a canonical restriction to a functor $F: \langle \sf{G}, \sf{G} \rangle_{/X} \to \mathsf{Top}$. The topologically internal nerve $N(\langle \sf{G}, \sf{G} \rangle_{/X})$ of $\langle \sf{G}, \sf{G} \rangle_{/X}$ is a (non-Reedy fibrant) complete Segal space and the topologically internal Grothendieck construction allows us to model the functor $F(V \oplus -): \langle \sf{G}, \sf{G} \rangle_{/X} \to \mathsf{Top}$ as a left fibration of complete Segal spaces $\ms{F}_V \to N(\langle \sf{G}, \sf{G} \rangle_{/X})$. The homotopy limit of $F(V \oplus -)$ (when restricted to exclude the initial object) may then be modelled as the (derived) space of sections of this left fibration (suitably restricted to exclude the initial object).

This has been considered in the simplicial setting by Boavida de Brito~\cite{BdB} in which they make precise the statement that this models the $\infty$-categorical limit and that (up to Reedy fibrant replacement) the projective model structure on diagrams indexed by internal categories provides a model for the relevant $\infty$-category of functors. We make these ideas precise in the topological setting in~\cite{CarrTaggartcolimits}. 

We believe that the model here is equivalent to the model used by Weiss in the original constructions of orthogonal calculus~\cite{Weiss, WeissErratum}, and also note that this should be equivalent to the $\infty$-categorical limit based on quasicategories over the quasicategory $(N^h(\langle \sf{G}, \sf{G} \rangle)_{/X})_0$, where $N^h(-)$ denotes the homotopy coherent nerve. We do not pursue these equivalent descriptions here but note that our model has the correct universal property of a homotopy limit as equivalent to the derived space of maps from the constant functor to $F(V\oplus -)$,  see for example,~\cite{BdB, CarrTaggartcolimits}.
\end{rem}

\begin{rem}
The condition that a functor $F$ is $X$-polynomial is equivalent to the statement that for each $V$, the canonical map 
\[
F(V) \longrightarrow \Map^h_{N((\langle \sf{G}, \sf{G} \rangle_{/X})_0)}(N(\langle \sf{G}, \sf{G} \rangle_{/X})_0), \ms{F}_V),
\]
is an equivalence, where the right-hand side denotes the derived space of sections of the left fibration $\mathscr{F}_V \to N((\langle \sf{G}, \sf{G} \rangle_{/X})_0)$ of complete Segal spaces. By adjunction this map is equivalently described as the map
\[
F(V) \times N(\langle \sf{G}, \sf{G} \rangle_{/X})_0) \longrightarrow \mathscr{F}_V,
\]
over $N(\langle \sf{G}, \sf{G} \rangle_{/X})_0)$ given by
\[
(x, U) \longmapsto i_U(x)
\]
where $(x, U) \in F(V) \times N((\langle \sf{G}, \sf{G} \rangle_{/X})_0)$, and $i_U : F(V) \to F(V\oplus U)$ is the map induced by the canonical inclusion $V \hookrightarrow V\oplus U$.
\end{rem}

\begin{rem}
For orthogonal calculus, i.e., $\sf{G} = \sf{G}^\mbf{O}$ (compare Example~\ref{exs: groupoid cores and S-1S}), the choice $X = \R^{n+1}$ recovers the notion of $n$-polynomial. Note also that this is invariant under isomorphism in $\sf{G}$. For instance, if $V$ is any $(n+1)$-dimensional real inner product space then $F$ is $V$-polynomial if and only if $F$ is $n$-polynomial. 
\end{rem}

Let $\es{C}$ be a topologically internal category with space of objects $\sf{ob}(\es{C})$ and space of morphisms $\mor(\es{C})$. The $q$-simplicies of the topological nerve of $\es{C}$ are given by
\[
N_q\es{C} = \mor(\es{C}) \times_{\sf{ob}(\es{C})} \cdots \times_{\sf{ob}(\es{C})} \mor(\es{C}),
\]
with $\mor(\es{C})$ appearing $q$-times. This defines a functor 
\[
N : \sf{Cat}(\sf{Top}) \longrightarrow \sf{Top}^{\Delta^\op},
\]
    from the category of topologically internal categories to the category of simplicial spaces. Under mild hypothesis on the internal categories, satisfied in all our examples, the internal nerve is a complete Segal space and hence may be viewed as an $\infty$-category.

\begin{lem}\label{lem: DK equiv on slices}
Let $\es{J}_1$ and $\es{J}_2$ be small topologically enriched categories. If there is a Dwyer-Kan equivalence $\alpha: \es{J}_1 \to \es{J}_2$, then for every $A \in \es{J}_1$, the induced functor
\[
N(\alpha): N((\es{J}_1)_{/A})\longrightarrow N((\es{J}_2)_{/\alpha(A)}),
\]
is an equivalence of simplicial spaces.
\end{lem}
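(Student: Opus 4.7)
The plan is to reduce the claim to the structural statement that $\alpha$ induces a Dwyer-Kan equivalence on topologically enriched slice categories
\[
\alpha_{/A}: (\es{J}_1)_{/A} \longrightarrow (\es{J}_2)_{/\alpha(A)},
\]
and then to appeal to the fact, developed in~\cite{CarrTaggartcolimits}, that the topological internal nerve functor carries Dwyer-Kan equivalences of topologically enriched categories to equivalences in the complete Segal space model structure on simplicial spaces.

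For the Dwyer-Kan equivalence on slices, I would first identify the mapping space $\Hom_{(\es{J}_1)_{/A}}((X, g), (Y, h))$ with the fiber of the post-composition map $h_\ast: \Hom_{\es{J}_1}(X, Y) \to \Hom_{\es{J}_1}(X, A)$ over $g$. Up to homotopy this agrees with the homotopy fiber, which is what the Segal space structure of $N((\es{J}_1)_{/A})$ detects. Since $\alpha$ is fully faithful it induces weak equivalences on both $\Hom_{\es{J}_1}(X, Y) \to \Hom_{\es{J}_2}(\alpha(X), \alpha(Y))$ and $\Hom_{\es{J}_1}(X, A) \to \Hom_{\es{J}_2}(\alpha(X), \alpha(A))$, and hence the induced map on homotopy fibers is a weak equivalence, yielding fully faithfulness of $\alpha_{/A}$. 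For essential surjectivity on $\pi_0$, given any $(Y, g)$ in $(\es{J}_2)_{/\alpha(A)}$, I would use essential surjectivity of $\pi_0\alpha$ to find $X \in \es{J}_1$ and an isomorphism $\phi: \alpha(X) \to Y$ in $\pi_0\es{J}_2$, then apply fully faithfulness on $\pi_0$ to lift $g \circ \phi$ to some $\tilde{g}: X \to A$ in $\pi_0\es{J}_1$. The object $(X, \tilde{g}) \in (\es{J}_1)_{/A}$ then maps to $(\alpha(X), g \circ \phi)$, which is isomorphic to $(Y, g)$ via $\phi$ in the target slice.

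The main obstacle is ensuring the model-theoretic setup is coherent: one must verify that the strict fibers defining slice mapping spaces may be replaced by homotopy fibers when passing to the Segal space, and that the internal nerve functor is homotopically well-behaved on Dwyer-Kan equivalences. Both of these are addressed by the machinery developed in~\cite{CarrTaggartcolimits}, so the heart of the argument is the elementary verification of full faithfulness and essential surjectivity above. Alternatively, one could work via the homotopy coherent nerve and the compatibility of slicing with the equivalence between quasicategories and complete Segal spaces, reducing the claim to a direct consequence of Lemma~\ref{lem: equiv of diagrams induces equiv of functors} applied to the topological slice categories.
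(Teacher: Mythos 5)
Your argument takes a genuinely different route from the paper's proof. The paper works levelwise with the internal nerve: it identifies the $q$-simplices of $N((\es{J}_1)_{/A})$ with spaces of flags $(V_0 \to \cdots \to V_q \to A)$ in $\es{J}_1$ and then argues that $N(\alpha)$ is a \emph{levelwise} equivalence of simplicial spaces, using full faithfulness of $\alpha$ for the $0$-simplices and right properness of $\sf{Top}$ to control the iterated pullbacks at higher levels. You instead establish that $N(\alpha)$ is a Dwyer--Kan equivalence at the level of Segal spaces\textemdash identifying the derived mapping spaces of the slice as homotopy fibers of postcomposition, using the induced map of fiber squares for full faithfulness and an elementary lift for essential surjectivity\textemdash and then appeal to the fact that such Dwyer--Kan equivalences coincide with weak equivalences in the complete Segal space model structure. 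Both routes reach the desired conclusion; the paper's is more concrete and yields the a priori stronger levelwise statement, while yours works purely with homotopy invariants and is somewhat more robust (the levelwise claim implicitly relies on $\alpha$ being surjective on object sets, which holds in all the intended examples but is not part of the hypotheses of the lemma). Two small points of precision: the slice $(\es{J}_1)_{/A}$ is a topologically \emph{internal} category rather than a topologically enriched one (its objects form a space, not a set), so ``Dwyer--Kan equivalence of topologically enriched slice categories'' should be read as the corresponding notion for internal categories or Segal spaces, which is exactly what your homotopy-fiber argument verifies; and the strict fiber of $h_\ast$ over $g$ need not coincide with the homotopy fiber unless $h_\ast$ is a fibration, but what the Segal space structure of $N((\es{J}_1)_{/A})$ records \emph{is} the homotopy fiber, so the argument should be phrased to lean on that directly rather than on a claimed agreement of strict and homotopy fibers. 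Finally, your closing alternative via Lemma~\ref{lem: equiv of diagrams induces equiv of functors} does not apply as stated, since that lemma concerns functor categories rather than slice categories.
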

\begin{proof}
The slice category $(\es{J}_1)_{/A}$ is an internal category with space of objects consisting of maps with target $A$, and space of morphisms given by flags of length two in $\es{J}_1$ with target $A$, i.e., a point in the space of morphisms of $(\es{J}_1)_{/A}$ is a sequence of maps $(V_0 \to V_1 \to A)$ where $V_0, V_1 \in \es{J}_1$. In general a point in a $q$-simplex of the internal nerve $N((\es{J}_1)_{/A})$ is a flag of length $q+1$ in $\es{J}_1$, i.e., a sequence
\[
(V_0 \to V_1 \to \cdots \to V_q \to A),
\]
of morphisms in $\es{J}_1$. For $0$-simplicies of the nerve, the induced map is an equivalence since $\alpha$ is a Dwyer-Kan equivalence, i.e., since $\alpha$ is fully faithful. A similar argument on higher simplicies, using the right properness of $\sf{Top}$ implies that the induced map 
\[
N(\alpha): N((\es{J}_1)_{/A}) \longrightarrow N((\es{J}_2)_{/\alpha(A)}),
\]
is a levelwise equivalence, hence, an equivalence of simplicial spaces.
\end{proof}

\begin{prop}\label{prop: generalized polynomial comparision} \label{prop: symplectic n-poly iff unitary n-poly}
Let $\sf{G}_1$ and $\sf{G}_2$ satisfy Hypothesis~\ref{hyp: groupoid}. If $\alpha: \sf{G}_1 \to \sf{G}_2$ a  monoidal Dwyer-Kan equivalence. and $X \in \langle \sf{G}_1, \sf{G}_1\rangle$, then a functor $F: \langle \sf{G}_2, \sf{G}_2 \rangle \to \T$ is $\alpha(X)$-polynomial if and only if $\alpha^\ast F: \langle \sf{G}_1, \sf{G}_1\rangle \to \T$ is $X$-polynomial.
\end{prop}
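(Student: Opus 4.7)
The plan is to reduce the statement to the fact that homotopy limits are invariant under equivalences of indexing Segal spaces, applied to the equivalence of slice categories produced by the lemmas already in hand. Fix $V \in \langle \sf{G}_1, \sf{G}_1 \rangle$. The condition that $\alpha^\ast F$ is $X$-polynomial at $V$ asks that the map
\[
(\alpha^\ast F)(V) \longrightarrow \underset{U \in (\langle \sf{G}_1, \sf{G}_1 \rangle_{/X})_0}{\holim}~(\alpha^\ast F)(V \oplus U)
\]
is an equivalence, while $F$ being $\alpha(X)$-polynomial at $\alpha(V)$ asks the analogous statement for $F$ over $(\langle \sf{G}_2, \sf{G}_2 \rangle_{/\alpha(X)})_0$. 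Since $(\alpha^\ast F)(V) = F(\alpha(V))$, it suffices to identify the two homotopy limits compatibly with the canonical maps.

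First, I would observe that because $\alpha$ is monoidal it sends $0$ to $0$ and sends $V \oplus U$ to $\alpha(V) \oplus \alpha(U)$; by the description of $\langle \alpha \rangle$ in the proof of Lemma~\ref{lem: equiv detected on groupoid cores} the same is true of $\langle \alpha \rangle$. In particular $\langle \alpha \rangle$ carries the initial object $\mathrm{id}_X \colon 0 \oplus X \to X$ of $\langle \sf{G}_1, \sf{G}_1 \rangle_{/X}$ to the initial object of $\langle \sf{G}_2, \sf{G}_2 \rangle_{/\alpha(X)}$ and so restricts to a functor of the slices with initial objects removed. By Lemma~\ref{lem: equiv detected on groupoid cores}, $\langle \alpha \rangle$ is a Dwyer-Kan equivalence, so Lemma~\ref{lem: DK equiv on slices} yields an equivalence of complete Segal spaces
\[
N(\langle \alpha \rangle_{/X}) \colon N((\langle \sf{G}_1, \sf{G}_1 \rangle_{/X})_0) \longrightarrow N((\langle \sf{G}_2, \sf{G}_2 \rangle_{/\alpha(X)})_0).
\]

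Next I would note that, because $\langle \alpha \rangle$ is monoidal, the restriction of $F(\alpha(V) \oplus -)$ along $\langle \alpha \rangle_{/X}$ is naturally isomorphic to $\alpha^\ast F(V \oplus -)$, so in the left fibration model of functors used in the paper the corresponding left fibration over $N((\langle \sf{G}_2, \sf{G}_2 \rangle_{/\alpha(X)})_0)$ pulls back to the left fibration modelling $\alpha^\ast F(V \oplus -)$. Homotopy limits are invariant under equivalences of indexing complete Segal spaces (this is the standard property of left fibration model structures in the simplicial setting of~\cite{BdB} and in the topological setting of~\cite{CarrTaggartcolimits}), giving an equivalence
\[
\underset{(\langle \sf{G}_1, \sf{G}_1 \rangle_{/X})_0}{\holim}~\alpha^\ast F(V \oplus -) \xrightarrow{\ \simeq \ } \underset{(\langle \sf{G}_2, \sf{G}_2 \rangle_{/\alpha(X)})_0}{\holim}~F(\alpha(V) \oplus -).
\]

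Finally, I would check that this equivalence is compatible with the canonical maps from $F(\alpha(V)) = (\alpha^\ast F)(V)$, whose naturality is straightforward from the description of the canonical map as induced by the inclusions $V \hookrightarrow V \oplus U$ and $\alpha(V) \hookrightarrow \alpha(V) \oplus \alpha(U) = \alpha(V \oplus U)$. Conclude that one canonical map is an equivalence if and only if the other is, as $V$ ranges over $\langle \sf{G}_1, \sf{G}_1 \rangle$ (noting for the reverse direction that $\langle \alpha \rangle$ is essentially surjective, so every object of $\langle \sf{G}_2, \sf{G}_2 \rangle$ is equivalent to $\alpha(V)$ for some $V$). The only real subtlety is the bookkeeping around the internal slice categories and verifying that the pullback identification of functors is natural; once the monoidality of $\alpha$ is used to match $\oplus$ with $\oplus$, this is purely formal.
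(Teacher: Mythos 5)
Your proposal is correct and follows essentially the same route as the paper's proof: transport to left fibrations, identify the fibration modelling $\alpha^\ast F(V \oplus -)$ as the pullback of $\ms{F}_{\alpha(V)}$ along the slice equivalence supplied by Lemma~\ref{lem: DK equiv on slices}, and conclude that the section spaces (hence the homotopy limits) agree compatibly with the canonical maps. Your additional remarks on the preservation of initial objects and on essential surjectivity for the converse direction are details the paper leaves implicit.
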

\begin{proof}
Assume that $F$ is $\alpha(X)$-polynomial. We want to show that for every $V \in \langle \sf{G}_1, \sf{G}_1\rangle$, the functor
\[
(N\langle\alpha\rangle^\ast F)(V \oplus -): N((\langle \sf{G}_1, \sf{G}_1\rangle_{/X})_0) \longrightarrow \T, 
\]
is a limit diagram. Transporting the problem to left fibrations, the left fibration associated to $(N\langle\alpha\rangle^\ast F)(V \oplus -)$ is naturally equivalent to the pullback along
\[
N\langle\alpha\rangle : N((\langle \sf{G}_1, \sf{G}_1\rangle_{/X})_0)  \longrightarrow N((\langle \sf{G}_2, \sf{G}_2\rangle_{/X})_0),
\]
of the left fibration $\ms{F}_{\alpha(V)}$ which corresponds to to the functor 
\[
F(\alpha(V) \oplus -): N((\langle \sf{G}_2, \sf{G}_2\rangle_{/X})_0)  \to \T.
\]
By Lemma~\ref{lem: DK equiv on slices} the map $\alpha$ is an equivalence, and hence the fibration $N\langle\alpha\rangle^\ast \ms{F}_{\alpha(V)}$ is equivalent to the fibration $\ms{F}_{\alpha(V)}$. It follows that the spaces of sections of these fibrations are equivalent, and hence the canonical map
\[
F(\alpha(V)) \longrightarrow \Map_{N((\langle \sf{G}_2, \sf{G}_2\rangle_{/\alpha(X)})_0)}(N((\langle \sf{G}_2, \sf{G}_2\rangle_{/\alpha(X)})_0), \ms{F}_{\alpha(V)}),
\]
being an equivalence implies that the canonical map
\[
(N\langle\alpha\rangle^\ast F)(V) = F(\alpha(V)) \longrightarrow \Map_{N((\langle \sf{G}_1, \sf{G}_1\rangle_{/X})_0)}(N((\langle \sf{G}_1, \sf{G}_1\rangle_{/X})_0), N\langle \alpha \rangle^\ast\ms{F}_{\alpha(V)}),
\]
is an equivalence, and hence $F$ is $X$-polynomial. The converse follows by carefully reading this argument in reverse.
\end{proof}

For ease of notation we will no longer distinguish between $N\langle \alpha \rangle$ and $\alpha$.

\begin{cor}\label{cor: compare poly with inverse}
Let $\sf{G}_1$ and $\sf{G}_2$ satisfy Hypothesis~\ref{hyp: groupoid}. If $\alpha: \sf{G}_1 \to \sf{G}_2$ is a monoidal Dwyer-Kan equivalence, and $X \in \es{J}_1$, then a functor $F: \langle \sf{G}_1, \sf{G}_1 \rangle \to \T$ is $X$-polynomial if and only if $\alpha_! F: \langle \sf{G}_2, \sf{G}_2\rangle \to \T$ is $\alpha(X)$-polynomial. 
\end{cor}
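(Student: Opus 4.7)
The plan is to reduce this corollary to the preceding proposition by exploiting the equivalence of $\infty$-categories established in Corollary~\ref{cor: equivalence groupids equivalent functors}. The key observation is that the proposition already handles direct pullback along $\alpha$, and $\alpha_!$ is (an inverse to) $\alpha^\ast$ under our hypotheses.

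First, I would invoke Corollary~\ref{cor: equivalence groupids equivalent functors}: since $\sf{G}_1$ and $\sf{G}_2$ satisfy Hypothesis~\ref{hyp: groupoid} and $\alpha$ is a monoidal Dwyer-Kan equivalence, the adjunction
\[
\adjunction{\langle\alpha \rangle_!}{\Fun(\langle \sf{G}_1, \sf{G}_1 \rangle, \T)}{\Fun(\langle \sf{G}_2, \sf{G}_2 \rangle, \T)}{\langle\alpha\rangle^\ast}
\]
is an equivalence of $\infty$-categories. In particular, the unit map $F \to \alpha^\ast \alpha_! F$ is an equivalence in $\Fun(\langle \sf{G}_1, \sf{G}_1 \rangle, \T)$ for every $F$.

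Next, I would apply Proposition~\ref{prop: generalized polynomial comparision} to the functor $\alpha_! F : \langle \sf{G}_2, \sf{G}_2 \rangle \to \T$, yielding the equivalence: $\alpha_! F$ is $\alpha(X)$-polynomial if and only if $\alpha^\ast \alpha_! F$ is $X$-polynomial. Combining this with the equivalence $F \simeq \alpha^\ast \alpha_! F$ and the fact that being $X$-polynomial is invariant under levelwise equivalence (immediate from Definition~\ref{def: polynomial}, since the polynomial condition asks that a particular canonical map be an equivalence), we conclude that $\alpha_! F$ is $\alpha(X)$-polynomial if and only if $F$ is $X$-polynomial.

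There is no real obstacle here, the argument is essentially formal. The only mild point worth verifying is that the polynomial condition transports along equivalences of functors, which is clear since the defining map $F(V) \to \holim_{U \in (\langle \sf{G}, \sf{G} \rangle_{/X})_0} F(V\oplus U)$ is natural in $F$ and both sides preserve equivalences.
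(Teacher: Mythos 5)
Your argument is correct and is essentially the same as the paper's: both use Corollary~\ref{cor: equivalence groupids equivalent functors} to get that the unit $F \to \alpha^\ast\alpha_! F$ is an equivalence, deduce that $F$ is $X$-polynomial iff $\alpha^\ast\alpha_! F$ is, and then close the loop with Proposition~\ref{prop: generalized polynomial comparision} applied to $\alpha_! F$. You are slightly more explicit than the paper in flagging that $X$-polynomiality is invariant under levelwise equivalence, but this is the same implicit step.
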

\begin{proof}
Since the adjunction
\[
\adjunction{\alpha_!}{\Fun(\langle \sf{G}_1, \sf{G}_1 \rangle, \T)}{\Fun(\langle \sf{G}_2, \sf{G}_2 \rangle, \T)}{\alpha^\ast},
\]
is an equivalence of $\infty$-categories, both the unit and counit are equivalences. It follows that a functor $F$ is $X$-polynomial if and only if $\alpha^\ast\alpha_!(F)$ is $X$-polynomial. By Proposition~\ref{prop: generalized polynomial comparision}, $\alpha^\ast\alpha_!(F)$ is $X$-polynomial if and only if $\alpha_!(F)$ is $\alpha(X)$-polynomial. 
\end{proof}

Denote by $\mathsf{Poly}^X(\langle \sf{G}, \sf{G} \rangle, \T)$ the full sub-$\infty$-category of $\Fun(\langle G, G \rangle, \T)$ spanned by the $X$-polynomial functors. Proposition~\ref{prop: generalized polynomial comparision} implies the following.

\begin{cor}\label{cor:equiv_poly}
Let $\sf{G}_1$ and $\sf{G}_2$ satisfy Hypothesis~\ref{hyp: groupoid}. If $\alpha: \sf{G}_1 \to \sf{G}_2$ is a monoidal Dwyer-Kan equivalence, and $X \in \es{J}_1$, then the adjoint pair
\[
\adjunction{\alpha_!}{\mathsf{Poly}^X(\langle \sf{G}_1, \sf{G}_1 \rangle, \T)}{\mathsf{Poly}^{\alpha(X)}(\langle \sf{G}_2, \sf{G}_2 \rangle, \T)}{\alpha^\ast},
\]
is an equivalence of $\infty$-categories.
\end{cor}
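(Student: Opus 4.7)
The plan is to deduce this directly from the equivalence of full functor categories in Corollary~\ref{cor: equivalence groupids equivalent functors}, together with the characterizations of polynomiality in Proposition~\ref{prop: generalized polynomial comparision} and Corollary~\ref{cor: compare poly with inverse}. Since Corollary~\ref{cor: equivalence groupids equivalent functors} already tells us that the adjunction
\[
\adjunction{\alpha_!}{\Fun(\langle \sf{G}_1, \sf{G}_1\rangle, \T)}{\Fun(\langle \sf{G}_2, \sf{G}_2\rangle, \T)}{\alpha^\ast}
\]
is an equivalence of $\infty$-categories, the essential content is that the two subcategories match up under the equivalence.

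First, I would show that $\alpha^\ast$ restricts to a functor $\poly{\alpha(X)}(\langle \sf{G}_2, \sf{G}_2\rangle, \T) \to \poly{X}(\langle \sf{G}_1, \sf{G}_1\rangle, \T)$; this is an immediate consequence of Proposition~\ref{prop: generalized polynomial comparision}, which says exactly that $F$ is $\alpha(X)$-polynomial if and only if $\alpha^\ast F$ is $X$-polynomial. Next, I would show that $\alpha_!$ restricts to a functor in the other direction, $\poly{X}(\langle \sf{G}_1, \sf{G}_1\rangle, \T) \to \poly{\alpha(X)}(\langle \sf{G}_2, \sf{G}_2\rangle, \T)$; this is Corollary~\ref{cor: compare poly with inverse}.

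With these two restrictions in place, we have an adjunction between the full sub-$\infty$-categories spanned by polynomial functors. Both the unit and counit of the ambient adjunction are equivalences (since that adjunction is an equivalence of $\infty$-categories), and they remain equivalences when restricted to the subcategories of polynomial functors because full sub-$\infty$-categories are closed under equivalence. Hence the restricted unit $F \to \alpha^\ast \alpha_! F$ is an equivalence for every $X$-polynomial $F$, and the restricted counit $\alpha_! \alpha^\ast G \to G$ is an equivalence for every $\alpha(X)$-polynomial $G$, giving the desired equivalence of $\infty$-categories.

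I do not anticipate any serious obstacle: essentially all the work has been done by Proposition~\ref{prop: generalized polynomial comparision} and Corollary~\ref{cor: equivalence groupids equivalent functors}. The only thing to verify carefully is that passing to a full sub-$\infty$-category preserved by both sides of an adjoint equivalence yields an adjoint equivalence on the subcategories, which is a formal fact: an adjunction whose unit and counit are equivalences is an equivalence, and these properties are inherited by any pair of full subcategories preserved by the two functors.
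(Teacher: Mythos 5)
Your proposal is correct and uses exactly the same ingredients as the paper's proof (Corollary~\ref{cor: equivalence groupids equivalent functors}, Proposition~\ref{prop: generalized polynomial comparision}, and Corollary~\ref{cor: compare poly with inverse}); the only cosmetic difference is that you verify the restricted unit and counit are equivalences, whereas the paper checks fully faithfulness plus essential surjectivity of the restricted $\alpha^\ast$, which amounts to the same formal fact.
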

\begin{proof}
By Corollary~\ref{cor: equivalence groupids equivalent functors}, the adjunction 
\[
\adjunction{\alpha_!}{\Fun(\langle \sf{G}_1, \sf{G}_1 \rangle, \T)}{\Fun(\langle \sf{G}_2, \sf{G}_2 \rangle, \T)}{\alpha^\ast},
\]
is an equivalence of $\infty$-categories. Since $\mathsf{Poly}^X(\langle \sf{G}_1, \sf{G}_1 \rangle, \T)$ is defined as a full sub-$\infty$-category, the functor $\alpha^\ast$ is fully faithful, hence it is left to show that $\alpha^\ast$ is essentially surjective, but this follows from essential surjectivity on the level of functors and Proposition~\ref{prop: generalized polynomial comparision} and Corollary~\ref{cor: compare poly with inverse}.
\end{proof}

% \begin{rem}
% For the reader with a preference for model categories, the above equivalence of $\infty$-categories could also be proved by providing a Quillen equivalence of model categories. The $\infty$-category of $X$-polynomial functors may be modeled as a left Bousfield localization of the projective model structure on $\Fun(\langle \sf{G}_1, \sf{G}_1 \rangle, \sf{Top})$, and similarly for the $\infty$-category of $\alpha(X)$-polynomial functors. The adjunction on the level of model categories is a Quillen adjunction by Proposition~\ref{prop: generalized polynomial comparision} in conjunction with~\cite[Proposition 3.3.18]{Hirschhorn} and~\cite[Theorem 3.1.6]{Hirschhorn}. The Quillen equivalence follows from~\cite[Corollary 1.3.16]{Hovey}: Lemma~\ref{lem: equiv of diagrams induces equiv of functors} shows that $\alpha^\ast$ preserves weak equivalences, and Corollary~\ref{cor: compare poly with inverse} allows for the reduction on the condition of the derived unit being an equivalence to a statement about the derived unit on the level of functor categories and the result follows by Proposition~\ref{prop: generalized polynomial comparision}.
% \end{rem}

\section{Symplectic Weiss calculus and the deformation retraction}\label{section: symplectic calculus}

In this section, we provide a candidate for symplectic Weiss calculus and show that it \emph{deformation retracts} onto unitary calculus. By utilising Section~\ref{sec: groupoid cores}, the deformation retract becomes an exercise in investigating the groupoid cores of the homogeneous categories used to construct unitary and symplectic calculus, and hence essentially a commentary on the unitary group $U(n)$ being the maximal compact subgroup of the symplectic group $\mathsf{Sp}(2n, \R)$.

\subsection{The unitary and symplectic groupoids}
Denote by $\sf{G}^\mbf{S}$ the topologically enriched groupoid of symplectic vector spaces and symplectic isomorphisms\textemdash with the operation on forms given by direct sum\textemdash  and denote by $\es{J}^\mbf{S}$ Qullien's bracket construction on $\sf{G}^\mbf{S}$, as in Example~\ref{exs: groupoid cores and S-1S}. There is a forgetful functor $\Im: \es{J}^\mbf{U} \to \es{J}^\mbf{S}$ which sends a Hermitian inner product space $V$ to its underlying real vector space $rV$ with symplectic structure given by the imaginary part of the Hermitian inner product, $\omega_{rV} = \Im(\langle-\mid-\rangle)$. A routine linear algebra exercise shows that this gives a well-defined symplectic form on $rV$. 

\begin{lem}\label{lem: unitary and symplectic equivalence of diagram cats}
The functor $\Im : \es{J}^\mbf{U} \to \es{J}^\mbf{S}$ is a Dwyer-Kan equivalence.
\end{lem}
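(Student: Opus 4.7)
The plan is to reduce the statement to the level of groupoids, then invoke Lemma~\ref{lem: equiv detected on groupoid cores}. Specifically, the functor $\Im$ is induced by a functor of monoidal groupoids $\Im \colon \sf{G}^\mbf{U} \to \sf{G}^\mbf{S}$ sending a Hermitian space $V$ to $(rV,\Im\langle - \mid - \rangle)$, and it is straightforward (by counting dimensions over $\mbf{R}$ and $\mbf{C}$) to verify that both $\sf{G}^\mbf{U}$ and $\sf{G}^\mbf{S}$ satisfy Hypothesis~\ref{hyp: groupoid}: both are cancellative, and the stabilization maps $U(n) \to U(n+m)$ and $\mathsf{Sp}(2n,\mbf{R}) \to \mathsf{Sp}(2(n+m),\mbf{R})$ are injective. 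So it suffices to show that $\Im \colon \sf{G}^\mbf{U} \to \sf{G}^\mbf{S}$ is a monoidal Dwyer-Kan equivalence of topologically enriched groupoids.

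Monoidality is immediate, since the imaginary part of the direct sum of two Hermitian forms is the direct sum of the imaginary parts. For essential surjectivity, I will appeal to the standard fact from symplectic linear algebra that every finite-dimensional symplectic vector space $(W,\omega)$ admits an $\omega$-compatible complex structure $J$, i.e., an $\mbf{R}$-linear map $J\colon W \to W$ with $J^2 = -\id$ such that $\omega(J-,J-) = \omega(-,-)$ and $\omega(-,J-)$ is a positive definite inner product. Given such a $J$, the formula $\langle u \mid v \rangle := \omega(u,Jv) + i\,\omega(u,v)$ defines a Hermitian inner product on $(W,J)$ whose imaginary part is $\omega$, exhibiting $(W,\omega)$ as isomorphic to $\Im(V)$ for some $V \in \sf{G}^\mbf{U}$.

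For fully faithfulness I need to show that
\[
\Im \colon \Hom_{\sf{G}^\mbf{U}}(V,W) \longrightarrow \Hom_{\sf{G}^\mbf{S}}(\Im V, \Im W)
\]
is a weak equivalence of spaces for each $V,W \in \sf{G}^\mbf{U}$. If $\dim_\mbf{C} V \ne \dim_\mbf{C} W$ both sides are empty (a symplectic isomorphism forces equal real dimension, which here means equal complex dimension). Otherwise, fix an isomorphism and observe that the two mapping spaces become torsors for $U(V)$ and $\mathsf{Sp}(\Im V)$, respectively, under the induced inclusion $U(V)\hookrightarrow \mathsf{Sp}(\Im V)$. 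So the map in question is equivalent to the inclusion $U(n) \hookrightarrow \mathsf{Sp}(2n,\mbf{R})$, which is a homotopy equivalence because $U(n)$ is a maximal compact subgroup of $\mathsf{Sp}(2n,\mbf{R})$; explicitly, the Cartan/polar decomposition $\mathsf{Sp}(2n,\mbf{R}) \cong U(n) \times \mbf{R}^N$ gives a deformation retraction onto the unitary subgroup.

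The main obstacle is bureaucratic rather than conceptual: one must be careful that the groupoid-level functor $\Im$ really induces the functor $\Im$ on the bracket categories appearing in the lemma statement (a quick unpacking of Quillen's bracket construction, using that $\Im$ is monoidal and sends $0 \mapsto 0$), and one should record the verification of Hypothesis~\ref{hyp: groupoid} for both groupoids. Once these verifications are in place, Lemma~\ref{lem: equiv detected on groupoid cores} delivers the Dwyer-Kan equivalence $\langle\Im\rangle \colon \es{J}^\mbf{U} \to \es{J}^\mbf{S}$ and completes the proof.
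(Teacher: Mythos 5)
Your proof is correct and follows essentially the same route as the paper: reduce to the groupoid level via Lemma~\ref{lem: equiv detected on groupoid cores}, and establish fully faithfulness by appealing to the fact that $U(n)$ is a maximal compact subgroup of $\mathsf{Sp}(2n,\mbf{R})$. You supply additional details that the paper leaves implicit (verifying Hypothesis~\ref{hyp: groupoid} for both groupoids, and the argument for essential surjectivity via $\omega$-compatible complex structures), but the underlying argument is identical.
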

\begin{proof}
First note that $\Im: \es{J}^\mbf{U} \to \es{J}^\mbf{S}$ is equivalently described as 
\[
\langle \Im \rangle : \langle \sf{G}^\mbf{U}, \sf{G}^\mbf{U} \rangle \longrightarrow \langle \sf{G}^\mbf{S}, \sf{G}^\mbf{S} \rangle\ldotp
\]
By Lemma~\ref{lem: equiv detected on groupoid cores} it suffices to show that the functor 
\[
\Im : \sf{G}^\mbf{U} \longrightarrow \sf{G}^\mbf{S},
\]
is a monoidal Dwyer-Kan equivalence. It is easy to see that $\Im$ is essentially surjective and monoidal, so we only need to verify that $\Im$ is fully faithful. For this, consider $\mbf{C}^n\in\sf{G}^{\mbf{U}}$. On these objects, $\Im$ induces an embedding
\[
U(n)\to \Aut_{\mbf{S}}(r\mbf{C}^n)\cong \mathsf{Sp}(2n,\mbf{R})
\]
compatible with the group structures. Since $U(n)$ is a maximal compact subgroup of the connected Lie group $\Aut_{\mbf{S}}(r\mbf{C}^n)\cong\mathsf{Sp}(2n,\mbf{R})$, there is a deformation retract of $\Aut_{\mbf{S}}(r\mbf{C}^n)$ onto $U(n)$ and this inclusion is therefore a weak equivalence. This implies that $\Im$ is fully faithful and therefore a Dwyer-Kan equivalence.
\end{proof}

\begin{rem}
The above result implies that complex Stiefel manifolds are homotopy equivalent to symplectic Stiefel manifolds. Results of this nature have been known for some time, see for example, \cite{AjayiBanyaga} and the references therein. In fact, in the case of semisimple connected Lie group results of this kind date back to Cartan, see also the book of Helgason~\cite[Ch. VI, Theorem 2.2]{Helgason}.
\end{rem}

\subsection{Symplectic Functors and polynomial approximation}
The equivalence of groupoids between $\sf{G}^\mbf{U}$ and $\sf{G}^\mbf{S}$ allows one to leverage unitary calculus arguments to show that an $n$-polynomial symplectic functor is $(n+1)$-polynomial.

\begin{prop}\label{prop: symplectic n-poly implies n+1-poly}
If a symplectic functor $F: \es{J}^\mbf{S} \to \T$ is $n$-polynomial, then it is $(n+1)$-polynomial.
\end{prop}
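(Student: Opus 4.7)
The plan is to transport the problem across the Dwyer-Kan equivalence $\Im\colon \sf{G}^{\mbf{U}}\to \sf{G}^{\mbf{S}}$ of Lemma~\ref{lem: unitary and symplectic equivalence of diagram cats} and deduce the result from the analogous statement in unitary calculus, which has already been established by the second named author in~\cite{TaggartUnitary}. The whole point of the machinery developed in Section~\ref{sec: groupoid cores} is to let us sidestep having to redo linear algebra directly in the symplectic setting, and this proposition is the first serious pay-off of that machinery.

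First I would fix the convention that an $n$-polynomial symplectic functor means an $\Im(\mbf{C}^{n+1})$-polynomial functor in the sense of Definition~\ref{def: polynomial}. This is the natural convention: in orthogonal calculus ``$n$-polynomial'' means $\mbf{R}^{n+1}$-polynomial, and in unitary calculus it means $\mbf{C}^{n+1}$-polynomial, and the choice above matches the unitary convention under the equivalence $\Im$, since $\Im$ is monoidal and hence sends the object indexing degree $n$ in one calculus to the object indexing degree $n$ in the other.

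With this convention in place, applying Proposition~\ref{prop: symplectic n-poly iff unitary n-poly} to $\alpha=\Im$ and $X=\mbf{C}^{n+1}$ shows that $F$ is $n$-polynomial as a symplectic functor if and only if $\Im^\ast F\colon \es{J}^{\mbf{U}}\to \T$ is $n$-polynomial as a unitary functor. The unitary analogue of the present proposition then upgrades $\Im^\ast F$ to an $(n+1)$-polynomial unitary functor, i.e.\ to a $\mbf{C}^{n+2}$-polynomial functor. A second application of Proposition~\ref{prop: symplectic n-poly iff unitary n-poly}, now with $X=\mbf{C}^{n+2}$, transports this conclusion back across $\Im$ and yields that $F$ is $\Im(\mbf{C}^{n+2})$-polynomial, which is exactly the statement that $F$ is $(n+1)$-polynomial in the symplectic sense.

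I do not expect any serious obstacle here; the only subtlety is notational. One must verify that the meaning of ``$n$-polynomial'' is preserved under $\Im$, which is immediate from Lemma~\ref{lem: equiv detected on groupoid cores} together with the fact that $\Im$ is monoidal. No new geometric or analytical input on symplectic vector spaces is required; the general machinery of Section~\ref{sec: groupoid cores} reduces the claim entirely to the corresponding fact in unitary calculus.
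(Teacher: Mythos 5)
Your proposal is correct and is essentially the paper's own argument: transport $F$ across the Dwyer--Kan equivalence $\Im$ via Proposition~\ref{prop: symplectic n-poly iff unitary n-poly}, invoke the unitary statement that $n$-polynomial implies $(n+1)$-polynomial, and transport back. The only cosmetic difference is that you apply the ``if and only if'' of that proposition in both directions, whereas the paper returns via Corollary~\ref{cor: compare poly with inverse} and the identification $F \simeq \Im_!\Im^\ast F$; these are interchangeable.
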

\begin{proof}
If $F$ is $n$-polynomial, then by Proposition~\ref{prop: generalized polynomial comparision} the unitary functor $\Im^\ast F$ is $n$-polynomial, hence $(n+1)$-polynomial by the corresponding result in unitary calculus, see for example,~\cite[Proposition 3.10]{TaggartUnitary}. An application of Corollary~\ref{cor: compare poly with inverse} yields that the symplectic functor $F \simeq \Im_! \Im^\ast F$ is $(n+1)$-polynomial, whence the result.
\end{proof}

Let $\es{J}$ be one of the categories of vector spaces, and denote by $\mbf{k}$ the underlying field. For any functor $F : \es{J} \to \T$, define $\tau_nF : \es{J} \to \T$ to be the functor given by
\[
\tau_nF(V) := \holim_{U \in (\es{J} _{/\mbf{k}^{n+1}})_0} F(V \oplus U).
\]

This functor is in general not $n$-polynomial as it need not be idempotent but iterating to obtain a functor,
\[
T_nF(V) := \hocolim_{k \geq 0} (\tau_n)^kF(V),
\]
typically yields an $n$-polynomial functor. In all known cases of Weiss calculus, this functor is the universal $n$-polynomial functor under $F$. The proof that $T_nF$ is $n$-polynomial is one of the most complex aspects of setting up a Weiss calculus, see for example,~\cite{Weiss, WeissErratum}. It involves an intricate analysis of the (homotopy) (co)limits in the definition of $T_nF$ together with the fact that the slice category $(\es{J} _{/\mbf{k}^{n+1}})_0$ is a \emph{topologically internal category}, or rather, the nerve of $(\es{J} _{/\mbf{k}^{n+1}})_0$ is a (non-Reedy fibrant) complete Segal space. We can utilise Proposition~\ref{prop: symplectic n-poly iff unitary n-poly} to avoid this analysis in the symplectic setting.

\begin{prop}\label{prop: Tn n-poly}
    For a symplectic functor $F: \es{J}^\mbf{S} \to \T$, the functor $T_nF$ is $n$-polynomial. 
\end{prop}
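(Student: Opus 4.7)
The plan is to transport the claim through the Dwyer-Kan equivalence $\Im : \es{J}^\mbf{U} \to \es{J}^\mbf{S}$ of Lemma~\ref{lem: unitary and symplectic equivalence of diagram cats} and reduce to the known unitary analogue. Concretely, I will establish an equivalence $\Im^\ast T_n F \simeq T_n \Im^\ast F$ of unitary functors; appealing to the unitary version of the present proposition (see \cite{TaggartUnitary}), the right-hand side is $\mbf{C}^{n+1}$-polynomial, hence so is $\Im^\ast T_n F$; and then Proposition~\ref{prop: generalized polynomial comparision} yields that $T_n F$ itself is $\Im(\mbf{C}^{n+1})$-polynomial, which is the symplectic $n$-polynomial condition.

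The core of the argument is the commutation $\Im^\ast \tau_n F \simeq \tau_n \Im^\ast F$; the passage to $T_n$ is then automatic since $T_n$ is defined by a sequential homotopy colimit taken pointwise and $\Im^\ast$ is simply restriction. To check the commutation, unwind definitions: for $V \in \es{J}^\mbf{U}$, the value $\Im^\ast \tau_n F(V)$ is the homotopy limit of $F(\Im(V) \oplus -)$ over the punctured slice $(\es{J}^\mbf{S}_{/\Im(\mbf{C}^{n+1})})_0$, while $\tau_n \Im^\ast F(V)$ is the homotopy limit of $F(\Im(V) \oplus \Im(-))$ over $(\es{J}^\mbf{U}_{/\mbf{C}^{n+1}})_0$. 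Monoidality of $\Im$ identifies the diagrams, and Lemma~\ref{lem: DK equiv on slices} gives an equivalence between the internal nerves of these slice categories; on the left-fibration model of homotopy limits, an equivalence of base Segal spaces induces an equivalence of derived section spaces by pullback, delivering the desired equivalence.

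The main obstacle I expect is ensuring that the notion of ``$n$-polynomial'' used symplectically coincides with $\Im(\mbf{C}^{n+1})$-polynomial in the sense of Definition~\ref{def: polynomial}, so that the closing application of Proposition~\ref{prop: generalized polynomial comparision} delivers the intended conclusion. Given the convention that the standard symplectic form on $\mbf{R}^{2(n+1)}$ is the imaginary part of the Hermitian form on $\mbf{C}^{n+1}$, this is essentially an unfolding of definitions, but it is worth making explicit, as the whole strategy rests on the identification of the two polynomiality notions along $\Im$.
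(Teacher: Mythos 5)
Your proposal matches the paper's proof essentially verbatim: both reduce the claim to exhibiting an equivalence $\Im^\ast T_n F \simeq T_n^\mbf{U}(\Im^\ast F)$, obtained by commuting $\Im^\ast$ with $\tau_n$ through the slice-category equivalence of Lemma~\ref{lem: DK equiv on slices} (i.e., the content of the proof of Proposition~\ref{prop: generalized polynomial comparision}), and then conclude via the unitary version of the statement together with Proposition~\ref{prop: symplectic n-poly iff unitary n-poly}. The caveat you flag about identifying ``symplectic $n$-polynomial'' with ``$\Im(\mbf{C}^{n+1})$-polynomial'' in the sense of Definition~\ref{def: polynomial} is a fair one that the paper leaves implicit, but it is resolved exactly as you describe.
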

\begin{proof}
By Proposition~\ref{prop: symplectic n-poly iff unitary n-poly} it suffices to provide an equivalence between $\Im^\ast(T_nF)$ and $T_n^\mbf{U}(\Im^\ast F)$ where the latter denotes the universal $n$-polynomial approximation in unitary calculus. Since the functor $\Im^\ast$ commutes with all limits and colimits it suffices to provide an identification between the relevant homotopy limits, but this is precisely the content of the proof of Proposition~\ref{prop: symplectic n-poly iff unitary n-poly}.
\end{proof}

\begin{prop}\label{prop: Tn universal}
If $F$ is a symplectic functor, then $T_nF$ is the universal $n$-polynomial functor under $F$.  
\end{prop}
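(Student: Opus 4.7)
The plan is to transport universality from the symplectic side to the unitary side via the equivalence $\Im: \es{J}^\mbf{U} \to \es{J}^\mbf{S}$ of Lemma~\ref{lem: unitary and symplectic equivalence of diagram cats}, which by Corollary~\ref{cor: equivalence groupids equivalent functors} induces an equivalence of functor $\infty$-categories, and by Corollary~\ref{cor:equiv_poly} restricts to an equivalence $\mathsf{Poly}^X(\es{J}^\mbf{U}, \T) \simeq \mathsf{Poly}^{\Im(X)}(\es{J}^\mbf{S}, \T)$ on polynomial subcategories. Universality of $T_nF$ as an $n$-polynomial functor under $F$ can be formulated as the assertion that, for every $n$-polynomial symplectic functor $G$, the map
\[
\Map_{\Fun(\es{J}^\mbf{S}, \T)}(T_nF, G) \longrightarrow \Map_{\Fun(\es{J}^\mbf{S}, \T)}(F, G),
\]
induced by precomposition with the canonical natural transformation $\eta: F \to T_nF$, is an equivalence.

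First, I would verify that under the equivalence $\Im^\ast$, the natural transformation $\eta: F \to T_nF$ corresponds to the analogous transformation $\eta^\mbf{U}: \Im^\ast F \to T_n^\mbf{U}\Im^\ast F$ in unitary calculus. Both natural transformations are built levelwise from the collection of inclusions $V \hookrightarrow V \oplus U$ for $U \in (\es{J}_{/\mbf{k}^{n+1}})_0$, assembled via the homotopy limit defining $\tau_n$ and then via sequential colimit. Because $\Im^\ast$ is part of an equivalence of $\infty$-categories it preserves both sorts of (co)limits, and the equivalence $\Im^\ast T_nF \simeq T_n^\mbf{U}\Im^\ast F$ established in the proof of Proposition~\ref{prop: Tn n-poly} (via the identification of slice categories in Lemma~\ref{lem: DK equiv on slices}) is seen to intertwine the structure maps.

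With this compatibility in hand, given any $n$-polynomial symplectic functor $G$, Proposition~\ref{prop: symplectic n-poly iff unitary n-poly} implies that $\Im^\ast G$ is $n$-polynomial in the unitary sense, so the desired equivalence is obtained by rewriting
\[
\Map_{\Fun(\es{J}^\mbf{S}, \T)}(T_nF, G) \simeq \Map_{\Fun(\es{J}^\mbf{U}, \T)}(\Im^\ast T_nF, \Im^\ast G) \simeq \Map_{\Fun(\es{J}^\mbf{U}, \T)}(T_n^\mbf{U}\Im^\ast F, \Im^\ast G),
\]
and analogously for $\Map(F, G)$, reducing universality of $T_nF$ to universality of $T_n^\mbf{U}\Im^\ast F$ as the unitary $n$-polynomial approximation under $\Im^\ast F$, which is known from~\cite{TaggartUnitary}.

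The main obstacle I anticipate is the first step: the compatibility of the two natural transformations under $\Im^\ast$. The equivalence of functor $\infty$-categories is formal, but the transformation $\eta$ is constructed pointwise in terms of inclusions of vector spaces, and one must carefully unwind definitions to see that $\Im^\ast \eta$ and $\eta^\mbf{U}$ agree as morphisms in $\Fun(\es{J}^\mbf{U}, \T)$. Once one has checked that $\Im^\ast$ carries the slice-category diagrams $(\es{J}^\mbf{S}_{/\mbf{k}^{n+1}})_0$ to $(\es{J}^\mbf{U}_{/\mbf{k}^{n+1}})_0$ equivariantly with respect to the structure maps of the $\tau_n$-construction (which again is Lemma~\ref{lem: DK equiv on slices} applied in a parametrised way), the remaining verification is routine.
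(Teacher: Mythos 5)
Your argument is correct, but it is not the route the paper takes. The paper disposes of universality by observing that the relevant part of Weiss's original argument \cite[Theorem 6.3]{Weiss} is formal and carries over verbatim: the erratum \cite{WeissErratum} only repairs the proof that $T_nF$ is $n$-polynomial, which is exactly the part the paper has already handled separately (Proposition~\ref{prop: Tn n-poly}) by transfer to unitary calculus. You instead transfer the \emph{entire} universality statement along the equivalence $\Im^\ast$: identify $\Im^\ast\eta$ with $\eta^{\mbf{U}}$, use that an equivalence of $\infty$-categories preserves mapping spaces and (by Proposition~\ref{prop: symplectic n-poly iff unitary n-poly}) the property of being $n$-polynomial, and quote the known unitary universality. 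Both work, and there is no circularity in yours since the identification $\Im^\ast T_nF\simeq T_n^{\mbf{U}}\Im^\ast F$ under $\Im^\ast F$ rests only on $\Im^\ast$ commuting with the relevant (co)limits, not on universality. Your approach is more in the spirit of Part~\ref{part: non-compact} (everything reduced to the unitary case, no direct argument in the symplectic category at all), and indeed the paper's proof of the final tower-comparison theorem uses precisely the converse of your observation (that $\Im^\ast$ of an initial map is initial); the paper's approach is shorter to state but implicitly asks the reader to check that the non-Stiefel-theoretic portion of \cite[Theorem 6.3]{Weiss} is category-independent. The one point you rightly flag as needing care --- that $\Im^\ast\eta$ and $\eta^{\mbf{U}}$ agree as maps under $\Im^\ast F$, not merely that their targets are abstractly equivalent --- is genuinely the crux of your version, and your sketch of how to verify it (compatibility of the slice-category identification of Lemma~\ref{lem: DK equiv on slices} with the structure maps of $\tau_n$ and the sequential colimit) is the right one.
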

\begin{proof}
The proof follows~\cite[Theorem 6.3]{Weiss}, but noting that the errata~\cite{WeissErratum} only corrects the proof that $T_nF$ is $n$-polynomial, see also~\cite[Corollary 3.2.0.10.]{Hendrian}.
\end{proof}

\begin{rem}
The classical proof that $T_nF$ is $n$-polynomial~\cite{WeissErratum} uses Weiss's chosen model for the homotopy limit of a functor $F: (\es{J}_{/\mbf{R}^{n+1}})_0 \to \mathsf{Top}$. In all other models of Weiss calculus, the proof uses an analogous model for the homotopy limit. Examining the content of~\cite[Theorem 6.3]{Weiss} and~\cite{WeissErratum} one observes that having a model is key only in proving~\cite[Lemma e.3]{WeissErratum}, i.e., the statement that if $f: E \to F$ is such that $f(V): E(V) \to F(V)$ is approximately $((n+1)\dim(V))$-connected, then $\tau_nf(V)$ is more connected. Using the model for the homotopy limit as the derived space of sections of a left fibration of complete Segal spaces together with the formula for connectivity of natural transformation spaces, see for example,~\cite[Proposition A.1.1]{DottoEquivariantdiagrams}, one can readily prove the same connectivity bound as in~\cite[Lemma e.3]{WeissErratum} by writing the relevant (derived) section spaces as (homotopy) fibers.
\end{rem}

\subsection{The deformation retract of calculi}
Precomposition with $\Im$ induces an adjunction
\begin{equation*}\label{eq: Unitary symplectic adjunction}
\adjunction{\Im_!}{\Fun(\es{J}^{\mbf{U}}, \T)}{\Fun(\es{J}^\mbf{S}, \T)}{\Im^\ast},
\end{equation*}
which we will use to compare the calculi. An application of Lemma~\ref{lem: equiv of diagrams induces equiv of functors}, yields the following result. 

\begin{lem}
The adjunction
\[
\adjunction{\Im_!}{\Fun(\es{J}^{\mbf{U}}, \T)}{\Fun(\es{J}^\mbf{S}, \T)}{\Im^\ast},
\]
is an equivalence of $\infty$-categories.
\end{lem}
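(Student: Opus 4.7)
The plan is to combine two results already established in the excerpt: the fact that $\Im\colon \es{J}^\mbf{U}\to\es{J}^\mbf{S}$ is a Dwyer-Kan equivalence (Lemma~\ref{lem: unitary and symplectic equivalence of diagram cats}) and the general statement that any Dwyer-Kan equivalence of small topologically enriched categories induces an equivalence of the associated functor $\infty$-categories via the $(\alpha_!,\alpha^\ast)$ adjunction (Lemma~\ref{lem: equiv of diagrams induces equiv of functors}). Since the excerpt already notes that $\Im$ factors through Quillen's bracket construction as $\langle \Im\rangle\colon \langle \sf{G}^\mbf{U},\sf{G}^\mbf{U}\rangle\to\langle\sf{G}^\mbf{S},\sf{G}^\mbf{S}\rangle$, the Dwyer-Kan equivalence input has already been verified via Lemma~\ref{lem: equiv detected on groupoid cores} applied to the monoidal Dwyer-Kan equivalence $\Im\colon \sf{G}^\mbf{U}\to\sf{G}^\mbf{S}$ (which rests on the maximal compactness of $U(n)\subseteq \mathsf{Sp}(2n,\mbf{R})$, already used in the proof of Lemma~\ref{lem: unitary and symplectic equivalence of diagram cats}).

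Concretely, I would proceed in two sentences. First, I would invoke Lemma~\ref{lem: unitary and symplectic equivalence of diagram cats} to note that $\Im\colon \es{J}^\mbf{U}\to\es{J}^\mbf{S}$ is a Dwyer-Kan equivalence of topologically enriched categories. Second, I would apply Lemma~\ref{lem: equiv of diagrams induces equiv of functors} with $\alpha = \Im$, $\es{J}_1 = \es{J}^\mbf{U}$ and $\es{J}_2=\es{J}^\mbf{S}$, which immediately yields that the induced adjunction
\[
\adjunction{\Im_!}{\Fun(\es{J}^{\mbf{U}}, \T)}{\Fun(\es{J}^\mbf{S}, \T)}{\Im^\ast}
\]
is an equivalence of $\infty$-categories.

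There is essentially no obstacle: all of the substantive content, both the linear-algebraic input (the inclusion $U(n)\hookrightarrow \mathsf{Sp}(2n,\mbf{R})$ is a homotopy equivalence) and the $\infty$-categorical input (passage to homotopy coherent nerves and the use of~\cite[Proposition 1.2.7.3(3)]{HTT}), has been done in the two previous lemmas. The only thing to double-check is purely bookkeeping, namely that the adjunction induced on functor categories by applying $\Fun(-,\T)$ to the Dwyer-Kan equivalence is indeed the stated $(\Im_!, \Im^\ast)$, but this is immediate from the definitions. Accordingly, the proof will be essentially a one-line citation of the two prior lemmas.
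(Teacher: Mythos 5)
Your proposal is correct and matches the paper exactly: the paper states this lemma with no separate proof, simply as "an application of Lemma~\ref{lem: equiv of diagrams induces equiv of functors}," with the Dwyer--Kan equivalence input supplied by Lemma~\ref{lem: unitary and symplectic equivalence of diagram cats}. Your two-step citation is precisely the intended argument.
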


Consider the full subcategory $\poly{n}(\es{J}^\mbf{S}, \T)$ of $n$-polynomial functors. The adjunction of Lemma \ref{lem: unitary and symplectic equivalence of diagram cats} descends to an adjunction between the categories of $n$-polynomial functors 
\[
\adjunction{\Im_!}{\poly{n}(\es{J}^\mbf{U}, \T)}{\poly{n}(\es{J}^\mbf{S}, \T)}{\Im^\ast},
\]
which is an equivalence of $\infty$-categories by Corollary~\ref{cor:equiv_poly}.

\begin{lem}
The adjoint pair
\[
\adjunction{\Im_!}{\poly{n}(\es{J}^\mbf{U}, \T)}{\poly{n}(\es{J}^\mbf{S}, \T)}{\Im^\ast},
\]
is an equivalence of $\infty$-categories.
\end{lem}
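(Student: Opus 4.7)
The plan is to recognize that this lemma is essentially a direct corollary of the general machinery developed earlier: we have already established (in Corollary~\ref{cor:equiv_poly}) that a monoidal Dwyer--Kan equivalence of groupoids satisfying Hypothesis~\ref{hyp: groupoid} induces an equivalence of $\infty$-categories of $X$-polynomial functors for any $X$. The goal is simply to specialize to $\alpha = \Im$ and identify the conventions for the symplectic notion of $n$-polynomial with the $X$-polynomial notion for the appropriate choice of $X$.

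First, I would verify that both groupoids $\sf{G}^\mbf{U}$ and $\sf{G}^\mbf{S}$ satisfy Hypothesis~\ref{hyp: groupoid}. For $\sf{G}^\mbf{U}$ this is standard: finite-dimensional Hermitian inner product spaces form a cancellative symmetric monoidal groupoid and the inclusion $U(n) \hookrightarrow U(n+m)$ as a block-diagonal subgroup is injective. For $\sf{G}^\mbf{S}$ the same holds: orthogonal direct sum of symplectic forms is cancellative on symplectic vector spaces (by dimension count and the classification of symplectic forms), and the block-diagonal inclusion $\mathsf{Sp}(2n,\mbf{R}) \hookrightarrow \mathsf{Sp}(2(n+m),\mbf{R})$ is injective.

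Next, by Lemma~\ref{lem: unitary and symplectic equivalence of diagram cats}, the functor $\Im\colon \sf{G}^\mbf{U} \to \sf{G}^\mbf{S}$ is a monoidal Dwyer--Kan equivalence, and under the identification $\es{J}^{\mbf{U}} \cong \langle \sf{G}^\mbf{U}, \sf{G}^\mbf{U} \rangle$ and $\es{J}^{\mbf{S}} \cong \langle \sf{G}^\mbf{S}, \sf{G}^\mbf{S} \rangle$ from Example~\ref{exs: groupoid cores and S-1S}, the induced functor $\langle \Im \rangle$ is precisely the $\Im\colon \es{J}^{\mbf{U}} \to \es{J}^{\mbf{S}}$ in the statement. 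Now, the convention that a symplectic functor is $n$-polynomial means $X$-polynomial where $X = \Im(\mbf{C}^{n+1})$, since this is the natural symplectic vector space of ``complex dimension $n+1$'' that underlies the construction of $\tau_n$ in the symplectic setting (equivalently, the image under $\Im$ of the distinguished object $\mbf{C}^{n+1}$ defining unitary $n$-polynomiality). With this matching of conventions, applying Corollary~\ref{cor:equiv_poly} with $\alpha = \Im$ and $X = \mbf{C}^{n+1}$ directly yields the desired equivalence
\[
\adjunction{\Im_!}{\poly{n}(\es{J}^\mbf{U}, \T)}{\poly{n}(\es{J}^\mbf{S}, \T)}{\Im^\ast}.
\]

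The main obstacle, and essentially the only non-routine point, is confirming that the two conventions for ``$n$-polynomial'' agree, i.e., that the symplectic $\mbf{k}^{n+1}$ appearing in the definition of $\tau_n^{\mbf{S}}$ is (up to symplectic isomorphism) $\Im(\mbf{C}^{n+1})$. This is a bookkeeping matter rather than a genuine mathematical difficulty, and once it is settled the lemma reduces entirely to Corollary~\ref{cor:equiv_poly}.
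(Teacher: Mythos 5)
Your proposal is correct and takes exactly the approach the paper uses: the paper states this lemma as an immediate consequence of Corollary~\ref{cor:equiv_poly} applied to the monoidal Dwyer--Kan equivalence $\Im$ of Lemma~\ref{lem: unitary and symplectic equivalence of diagram cats}. Your explicit check of Hypothesis~\ref{hyp: groupoid} for $\sf{G}^\mbf{U}$ and $\sf{G}^\mbf{S}$ and the identification of the $n$-polynomial convention with the $X$-polynomial one for $X = \mbf{C}^{n+1}$ are details the paper leaves implicit, but the argument is the same.
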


Proposition~\ref{prop: symplectic n-poly implies n+1-poly} and Proposition~\ref{prop: Tn universal} imply that for each $n\geq 1$, there is natural transformation
\[
T_{n}F \longrightarrow T_{n-1}F
\]
which assemble into a Weiss tower
\[\begin{tikzcd}
	&& F \\
	\cdots & {T_nF} & \cdots & {T_1F} & {T_0F}
	\arrow[from=2-4, to=2-5]
	\arrow[from=2-2, to=2-3]
	\arrow[from=2-3, to=2-4]
	\arrow[bend right=30, from=1-3, to=2-2]
	\arrow[from=2-1, to=2-2]
	\arrow[bend left=30, from=1-3, to=2-4]
	\arrow[bend left=20, from=1-3, to=2-5]
\end{tikzcd}\]
under $F$. We may represent this Weiss tower as a functor
\[
\mathsf{Tow}(F) : \mbf{Z}_{\geq 0}^\op \longrightarrow \Fun(\es{J}, \T), \qquad n \longmapsto T_nF,
\]
and hence globally as a functor
\[
\mathsf{Tow}: \Fun(\es{J}, \T) \longrightarrow \Fun(\mbf{Z}_{\geq 0}^\op, \Fun(\es{J},\T)),
\]
which sends $F$ to $\mathsf{Tow}(F)$.

\begin{thm}
For any symplectic functor $F: \es{J}^\mbf{S} \to \T$, there is an equivalence
\[
\mathsf{Tow}^\mbf{S}(F) \simeq \Im_!\mathsf{Tow}^\mbf{U}(\Im^\ast F).
\]
\end{thm}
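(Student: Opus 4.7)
The strategy is first to establish the equivalence levelwise, i.e., to show that $T_n^\mbf{S}F \simeq \Im_! T_n^\mbf{U}(\Im^\ast F)$ for each $n \geq 0$, and then to promote this to an equivalence of towers by naturality. The key ingredient is the adjoint equivalence $(\Im_!, \Im^\ast)$ together with the fact that, by Corollary~\ref{cor:equiv_poly}, this equivalence restricts to an equivalence between the full subcategories of $n$-polynomial functors on each side.

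To set up the levelwise comparison, I would first observe that $\Im_! T_n^\mbf{U}(\Im^\ast F)$ is $n$-polynomial: the functor $T_n^\mbf{U}(\Im^\ast F)$ is $n$-polynomial by construction, and $\Im_!$ preserves $n$-polynomial functors by Corollary~\ref{cor:equiv_poly}. Next, I would produce a natural map $F \to \Im_! T_n^\mbf{U}(\Im^\ast F)$ by composing the counit equivalence $F \simeq \Im_! \Im^\ast F$ (since $(\Im_!, \Im^\ast)$ is an equivalence) with $\Im_!$ applied to the universal map $\Im^\ast F \to T_n^\mbf{U}(\Im^\ast F)$ from unitary calculus. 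By the universal property of $T_n^\mbf{S}F$ recorded in Proposition~\ref{prop: Tn universal}, this induces a canonical map $T_n^\mbf{S}F \to \Im_! T_n^\mbf{U}(\Im^\ast F)$.

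To show this map is an equivalence, I would apply $\Im^\ast$ and use the fact that $\Im^\ast$, being part of an $\infty$-categorical equivalence, reflects equivalences. On the right-hand side, $\Im^\ast \Im_! T_n^\mbf{U}(\Im^\ast F) \simeq T_n^\mbf{U}(\Im^\ast F)$. On the left-hand side, $\Im^\ast T_n^\mbf{S}F$ is $n$-polynomial by Proposition~\ref{prop: symplectic n-poly iff unitary n-poly} and receives a canonical map from $\Im^\ast F$; indeed, the proof of Proposition~\ref{prop: Tn n-poly} already identifies $\Im^\ast T_n^\mbf{S}F$ with $T_n^\mbf{U}(\Im^\ast F)$ by comparing the homotopy (co)limit formulas levelwise. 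Hence both sides compute the universal $n$-polynomial approximation to $\Im^\ast F$ in unitary calculus and are therefore equivalent, from which the original map is an equivalence.

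Finally, to upgrade these levelwise equivalences to an equivalence of functors $\mbf{Z}_{\geq 0}^\op \to \Fun(\es{J}^\mbf{S}, \T)$, I would verify compatibility with the transition maps $T_{n+1} \to T_n$. By the universal property of $n$-polynomial approximation, the transition map in each tower is characterised (up to a contractible space of choices) as the unique map from an $(n+1)$-polynomial functor to the $n$-polynomial approximation that is compatible with the structural maps from $F$. Since $\Im_!$ (being an equivalence of $\infty$-categories) preserves the universal structure used to define the transition maps, and since naturality in $F$ follows from the naturality of the unit and counit of the adjoint equivalence $(\Im_!, \Im^\ast)$, the levelwise equivalences assemble into the desired equivalence $\mathsf{Tow}^\mbf{S}(F) \simeq \Im_! \mathsf{Tow}^\mbf{U}(\Im^\ast F)$ of towers. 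The main technical obstacle I anticipate is ensuring that the comparison of the universal maps behaves coherently in $n$ rather than just up to unspecified homotopy; in a model-categorical presentation this boils down to the fact that the left Bousfield localisations presenting $\poly{n}$ for varying $n$ are nested, and all constructions are performed functorially.
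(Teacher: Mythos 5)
Your proposal is correct and follows essentially the same route as the paper: after reducing to the other side of the equivalence $(\Im_!,\Im^\ast)$, both arguments obtain the levelwise identification from Proposition~\ref{prop: Tn n-poly} and then use the universal property of $n$-polynomial approximation (all comparison and transition maps being maps under $\Im^\ast F$ into polynomial functors) to make the squares commute and assemble the tower equivalence. The coherence-in-$n$ issue you flag is handled in the paper by exactly this initiality argument.
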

\begin{proof}
Since $\Im_!$ is inverse to $\Im^\ast$ at the level of $\infty$-categories, it suffices exhibit an equivalence 
\[
\Im^\ast \mathsf{Tow}^\mbf{S}(F) \simeq \mathsf{Tow}^\mbf{U}(\Im^\ast F). 
\]
Recall that there is a map $F\to T_n F$ initial among all $n$-polynomial functors. Since $\Im^\ast$ is an equivalence, it follows that for each $n\ge0$, $\Im^\ast(F)\to \Im^\ast (T_k F)$ is likewise initial. Thus, for each $n\ge 1$, in the following square
\[\begin{tikzcd}
	{\Im^\ast (T_nF)} & {\Im^\ast(T_{n-1}F)} \\
	{T_n^\mbf{U}(\Im^\ast F)} & {T_{n-1}^\mbf{U}(\Im^\ast F)}
	\arrow[from=1-1, to=1-2]
 	\arrow[from=1-2, to=2-2]
 	\arrow[from=1-1, to=2-1]
 	\arrow[from=2-1, to=2-2]
 \end{tikzcd}
\]
all composites are maps under $\Im^\ast F$, where the vertical maps are the equivalences of Proposition~\ref{prop: Tn n-poly} and horizontal arrows arise from the Weiss tower. Hence, this square commutes up to homotopy. This implies that there exists a natural equivalence $\Im^\ast\sf{Tow}^\mbf{S}(F)\to \sf{Tow}^{\mbf{U}}(\Im^\ast F)$.
\end{proof}
\begin{rem}
Here we have only considered functors to unpointed spaces. To obtain analogous results for functors to pointed spaces requires little more than the observation that a functor $F: \es{J} \to \T_\ast$ is $n$-polynomial if and only if the induced functor $F: \es{J} \to \T$ is $n$-polynomial, i.e., that limits in pointed spaces are computed in spaces. For more on the interplay between pointed and unpointed orthogonal calculus in the $\infty$-categorical setup see for example,~\cite{Hendrian}.
\end{rem}

\part{Quarternion Weiss calculus}\label{part: quaternion calc}
In this part, we construct (in the now standard way) quaternion calculus, i.e., a version of Weiss calculus built on the compact symplectic groups $\mathsf{Sp}(n) =\Aut(\mbf{H}^n)$, and provide comparisons between this calculus and the other known versions of calculus. This construction is somewhat standard as the quaternions support just enough linear algebra.

\section{Quaternion Steifel combinatorics}\label{sec: combinatorics}
Much of orthogonal calculus is governed by understanding the geometry of Stiefel manifolds, see for example,~\cite[\S1, \S4]{Weiss}. For example, the existence of a universal $n$-polynomial approximation, and the existence of the Weiss tower itself are reliant on understanding the geometry of Stiefel manifolds. In fact, this `Stiefel combinatorics' is still an essential input to constructing orthogonal calculus using the topos-theoretic machine\footnote{The exact proof that their topos-theoretic construction recovers orthogonal calculus has not yet appeared in the literature, but is claimed in~\cite{ABFJ}.} of Anel, Biedermann, Finster and Joyal~\cite{ABFJ}, and similar analysis of Stiefel manifolds is required in the construction of all known variants of Weiss calculus. In this section, we introduce the necessary combinatorics with Stiefel manifolds to construct a quaternion version of Weiss calculus. For the reader interested less in the finer details of the constructions and more in the properties of the calculus or those familiar with~\cite[\S1, \S4]{Weiss}, we invite you to skip this section.

\subsection{Quaternion vector spaces}
Denote by $\mbf{H}$ the division algebra of the quaternions with inner product given by 
\[
\langle-,-\rangle : \mbf{H} \times \mbf{H} \longrightarrow \mbf{H}, (q_1,q_2) \longmapsto q_1\overline{q_2}. 
\]
By a \emph{quaternion vector space} we mean a right $\mbf{H}$-module. Denote by $\es{J}^\mbf{H}$ the category of finite-dimensional quaternion inner product subspaces of $\mbf{H}^\infty$ and inner product preserving embeddings. The space of maps $\es{J}^\mbf{H}(V,W)$ may be topologised as the Stiefel manifold of $\dim_{\mbf{H}}(V)$-frames in $W$, i.e., as the homogeneous space 
\[
\sf{Sp}(W)/\sf{Sp}(V^\perp),
\]
where $\sf{Sp}(V)$ is the \emph{compact Symplectic group}, or equivalently the space $\es{J}^\mbf{H}(V,V)$. In other words, the category $\es{J}^\mbf{H}$, is Quillen's bracket construction on the groupoid given by the disjoint union of the topological groups $\mathsf{Sp}(V)$, compare with Example~\ref{exs: groupoid cores and S-1S}.

For each $n \in \mbf{N}$, sitting over the space of linear isometries $\es{J}^\mbf{H}(V,W)$ is the $n$-fold orthogonal complement vector bundle $\gamma^\mbf{H}_n(V,W)$ with fiber over a linear isometry $f$ given by $\mbf{H}^n \otimes f(V)^\perp$. To ease notation we write $nV$ for the tensor product $\mbf{H}^n \otimes V$. Denote by $\es{J}_n^\mbf{H}$ the category with the same objects as $\es{J}^\mbf{H}$ and morphism space given by the Thom space of the total space of $\gamma^\mbf{H}_n(V,W)$.

\subsection{Stiefel Combinatorics over the quaternions}
As is standard in all other forms of orthogonal calculus, see for example, \cite{Weiss, TaggartUnitary, TaggartReality}, the sphere bundle of the vector bundle $\gamma_n(V,W)$ may be constructed (up to homeomorphism) as a certain homotopy colimit of morphism spaces in $\es{J}^\mbf{H}$. The proof is a rather involved interaction between homotopy theory and linear algebra, the first account of which was provided by Weiss in \cite[Proposition 4.2]{Weiss}.

\begin{rem}
The following proof involves heavy linear algebra. Due to the non-commutativity of the quaternions care must be taken when discussing eigenvalues and eigenvectors of $\mbf{H}$-linear endomorphisms. For instance, given an eigenvalue $\lambda$ of a $\mbf{H}$-linear endomorphism on a quaternion vector space $V$ the eigenspace $E(\lambda)$ need only be a real subspace of $V$. If the eigenvalue $\lambda$ is real then the eigenspace is a $\mbf{H}$-linear subspace of $V$. We will only need to consider real eigenvalues since the eigenvalues of any self-adjoint linear isometry on a quaternion vector space $V$ are necessarily real. A good review of linear algebra over the quaternions can be found in~\cite[Appendix B]{SchwedeGlobalSplittings}.
\end{rem}

\begin{prop}
For each $n \in \mbf{N}$, there is a homeomorphism
\[
\underset{0 \neq U \subseteq \mbf{H}^{n+1}}{\hocolim}~\es{J}^\mbf{H}(U \oplus V, W) \longrightarrow S\gamma^\mbf{H}_{n+1}(V,W),
\]
which is natural in $V,W \in \es{J}^\mbf{H}$. 
\end{prop}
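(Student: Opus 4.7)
The plan is to mirror Weiss's argument from \cite[Proposition 4.2]{Weiss}, adapted to the quaternion setting much like the analogous constructions in unitary calculus~\cite{TaggartUnitary} and unitary calculus with Reality~\cite{TaggartReality}. The overall strategy is to exhibit an explicit continuous map from the homotopy colimit to the sphere bundle and identify it as a homeomorphism via a stratification by rank; both sides are naturally filtered, the sphere bundle by the rank of a certain $\mbf{H}$-linear map and the homotopy colimit by $\dim_\mbf{H} U$, and the map I construct will respect these filtrations.

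First I would construct the map at the diagram level. For each nonzero $\mbf{H}$-subspace $U \subseteq \mbf{H}^{n+1}$, define
\[
\rho_U : \es{J}^\mbf{H}(U \oplus V, W) \longrightarrow S\gamma^\mbf{H}_{n+1}(V,W)
\]
by sending an isometric $\mbf{H}$-linear embedding $f : U \oplus V \to W$ to the pair $(g, v_f)$, where $g = f|_V$ and $v_f$ is obtained from $f|_U : U \to g(V)^\perp$ by extending by zero along $U^\perp \subseteq \mbf{H}^{n+1}$ to a $\mbf{H}$-linear map $\phi_f : \mbf{H}^{n+1} \to g(V)^\perp$, viewing $\phi_f$ as a vector in $\mbf{H}^{n+1} \otimes_\mbf{H} g(V)^\perp$, and then rescaling to unit length. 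Continuity and naturality in $V, W$ are straightforward. Compatibility with the structure maps coming from inclusions $U \subseteq U'$ and morphisms in the indexing category then assembles these maps into a continuous map from the homotopy colimit, which I would model as a bar-type construction as in~\cite{Weiss, WeissErratum, TaggartUnitary}.

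To build an inverse, I would use polar decomposition. A point $(g, v) \in S\gamma^\mbf{H}_{n+1}(V,W)$ corresponds to a nonzero $\mbf{H}$-linear map $\phi : \mbf{H}^{n+1} \to g(V)^\perp$; setting $U_\phi := (\ker \phi)^\perp$, the restriction $\phi|_{U_\phi}$ is injective. Because the self-adjoint operator $\phi|_{U_\phi}^\ast \phi|_{U_\phi}$ on $U_\phi$ has real positive eigenvalues, one can polar decompose $\phi|_{U_\phi} = \iota_\phi \cdot P_\phi$ where $\iota_\phi : U_\phi \hookrightarrow g(V)^\perp$ is an isometric embedding and $P_\phi$ is a positive self-adjoint automorphism of $U_\phi$. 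The isometric embedding determined by $\iota_\phi$ on $U_\phi$ and $g$ on $V$ defines a morphism $U_\phi \oplus V \to W$ in $\es{J}^\mbf{H}$, which represents a point of the homotopy colimit. The assignment is continuous away from the zero section, and stratum-by-stratum checking shows that it is inverse to $\rho$; the top stratum (full rank $\phi$) maps homeomorphically to the subspace of the homotopy colimit parameterized by $U = \mbf{H}^{n+1}$, and lower strata are handled by induction on $\dim U$.

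The main obstacle will be careful bookkeeping of quaternionic linear algebra: since $\mbf{H}$ is noncommutative, one must consistently distinguish left and right module structures, handle the conjugation in the inner product, and verify that polar decomposition, orthogonal projection, and the identification of $\mbf{H}$-linear maps with tensor products behave as expected. The saving grace is that self-adjoint operators on quaternionic inner product spaces have real eigenvalues and admit a real spectral theorem, as reviewed in~\cite[Appendix B]{SchwedeGlobalSplittings}, so the essential structure of the argument from the real and complex cases transports without change, with the remaining verifications reducing to routine diagram chases already present in Weiss's original argument.
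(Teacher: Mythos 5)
Your overall strategy is the same as the paper's, which simply defers to Weiss's Theorem~4.1 and its unitary analogue: build the map out of restriction/extension-by-zero, invert it using polar decomposition and the quaternionic spectral theorem, and organize everything by the rank stratification. The ingredients you isolate (real eigenvalues of self-adjoint quaternionic operators, polar decomposition, the left/right module bookkeeping) are exactly the points the paper's preceding remark flags as the only new difficulties. So the route is right. However, as written your sketch has one genuine gap in the middle.

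The gap is this: the vertex-level maps $\rho_U$ do not determine a map out of the \emph{homotopy} colimit, and the inverse you describe is not inverse to them. A point of the bar construction is a flag $U_0 \subsetneq \cdots \subsetneq U_k$ together with $f \colon U_k \oplus V \to W$ \emph{and} a barycentric coordinate in $\Delta^k$; to define the map one must specify it on these higher simplices, and Weiss does so by composing $f|_{U_k}$ with a positive self-adjoint weight operator whose eigenvalues are partial sums of the barycentric coordinates along the flag. Concretely, the image of your $\rho_U$'s consists only of (rescaled) partial isometries $\phi$, whereas a generic point $(g,\phi)$ of $S\gamma^\mbf{H}_{n+1}(V,W)$ has $\phi$ injective with non-constant singular values; such points correspond to interiors of higher simplices, not to vertices. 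Correspondingly, your proposed inverse discards the positive part $P_\phi$ of the polar decomposition, but $P_\phi$ (more precisely, the spectral decomposition of $\phi^\ast\phi$, whose distinct eigenvalues recover the flag and whose differences recover the simplex coordinates) is precisely the data needed to land in the homotopy colimit rather than in its $0$-skeleton; and the stratum-by-stratum inverse you describe is discontinuous where eigenvalues collide or vanish. The fix is standard and is what Weiss actually does\textemdash define the forward map on all simplices via the weight operators, check it is a continuous bijection, and conclude it is a homeomorphism by compactness of the (truncated) bar construction and Hausdorffness of the sphere bundle\textemdash but your proof as stated needs this interpolation step added, since it is where the spectral theorem for quaternionic self-adjoint operators is genuinely used.
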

\begin{proof}
The proof proceeds as in~\cite[Theorem 4.1]{Weiss} and~\cite[Theorem 4.1]{TaggartUnitary} taking into account the delicate nature of linear algebra over the quaternions as discussed in the preceding remark, and choosing a suitable model for the colimit as a homotopy colimit.
\end{proof}

\begin{rem}
The model for the homotopy colimit used by Weiss in~\cite{Weiss} is equivalent to the model based on left fibrations of (complete) Segal spaces in the sense of Boavida de Brito~\cite{BdB}. By~\cite[Theorem 1.22 and Proposition 4.1]{BdB}, it follows that the homotopy colimit of a left fibration is the geometric realization of the total simplicial space, and this models the quasicategorical homotopy colimit of the associated fibration of quasicategories, see also for example,~\cite[\href{https://kerodon.net/tag/02VF}{Corollary 02VF}]{kerodon} and~\cite[Proposition 10.3.6(viii)]{RVEl}, from which it follows that the formula for such a geometric realization matches that used by Weiss. We discuss this further in~\cite{CarrTaggartcolimits}.
\end{rem}

The final results in this section are homotopy cofiber sequences relating the morphism spaces in the categories $\es{J}^\mbf{H}_n$ and $\es{J}^\mbf{H}_{n+1}$. The proof of the following result may be taken from \cite[Proposition 1.2]{Weiss} by appropriately replacing the real numbers by the quaternions. 

\begin{prop}
For each $n\in \mbf{N}$, and $V,W \in \es{J}^\mbf{H}$, there is a natural cofiber sequence
\[
\es{J}^\mbf{H}_n(V \oplus \mbf{H}, W) \wedge S^{4n} \longrightarrow \es{J}^\mbf{H}_n(V,W) \longrightarrow \es{J}^\mbf{H}_{n+1}(V,W),
\]
as functors $(\es{J}^\mbf{H}_n)^\op \times \es{J}^\mbf{H}_n \to \T_\ast$. 
\end{prop}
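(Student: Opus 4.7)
The plan is to follow the outline of Weiss's proof of \cite[Proposition 1.2]{Weiss}, substituting $\mbf{H}$ for $\mbf{R}$ and paying attention to the subtleties of linear algebra over the quaternions. First I would unpack both morphism spaces as Thom spaces of the bundles $\gamma^\mbf{H}_n(V,W)$ and $\gamma^\mbf{H}_{n+1}(V,W)$, and exploit the canonical splitting $\mbf{H}^{n+1} \cong \mbf{H}^n \oplus \mbf{H}$ to obtain an isomorphism of vector bundles over $\es{J}^\mbf{H}(V,W)$,
\[
\gamma^\mbf{H}_{n+1}(V,W) \cong \gamma^\mbf{H}_n(V,W) \oplus \gamma^\mbf{H}_1(V,W).
\]
The inclusion of the first summand induces the second map of the desired cofiber sequence on Thom spaces.

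Next, I would analyse the pullback of $\gamma^\mbf{H}_n(V,W)$ along the restriction map $p\colon \es{J}^\mbf{H}(V\oplus\mbf{H}, W) \to \es{J}^\mbf{H}(V,W)$. Over an isometric embedding $g\colon V\oplus\mbf{H}\to W$ with restriction $f = g|_V$, the orthogonal complement $f(V)^\perp$ decomposes as $g(V\oplus\mbf{H})^\perp \oplus \mbf{H}\cdot g(0,1)$, so globally there is a split short exact sequence
\[
0 \longrightarrow \gamma^\mbf{H}_n(V\oplus\mbf{H}, W) \longrightarrow p^\ast\gamma^\mbf{H}_n(V,W) \longrightarrow \underline{\mbf{H}^n} \longrightarrow 0,
\]
of vector bundles over $\es{J}^\mbf{H}(V\oplus\mbf{H}, W)$, with $\underline{\mbf{H}^n}$ trivial of rank $n$. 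Passing to Thom spaces then yields a natural identification of the Thom space of $p^\ast\gamma^\mbf{H}_n(V,W)$ with $\es{J}^\mbf{H}_n(V\oplus\mbf{H}, W) \wedge S^{4n}$, and the canonical bundle map $p^\ast\gamma^\mbf{H}_n(V,W) \to \gamma^\mbf{H}_n(V,W)$ covering $p$ supplies the first map in the sequence.

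The remaining and most substantive task is identifying the cofiber of this first map with $\es{J}^\mbf{H}_{n+1}(V,W)$. Following Weiss, the idea is to stratify the disk bundle $D\gamma^\mbf{H}_{n+1}(V,W)$ according to whether the $\gamma^\mbf{H}_1$-component is zero: on the zero locus one recovers $D\gamma^\mbf{H}_n(V,W)$, whose Thom space is $\es{J}^\mbf{H}_n(V,W)$; on the complement, polar decomposition in the $\gamma^\mbf{H}_1$ direction, combined with the identification $S\gamma^\mbf{H}_1(V,W) \simeq \es{J}^\mbf{H}(V\oplus\mbf{H}, W)$ supplied by the preceding proposition (applied with $n=0$, where the only nonzero subspace of $\mbf{H}$ is $\mbf{H}$ itself), exhibits this piece as a mapping cone on the first map. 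Collapsing the cone gives $\es{J}^\mbf{H}_{n+1}(V,W)$ as the cofiber.

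The hard part will be managing the linear algebra carefully over $\mbf{H}$ throughout: the non-commutativity of the quaternions means one must respect left versus right module structures and use only those spectral facts about self-adjoint quaternion-linear endomorphisms that survive verbatim (in particular, that their eigenvalues are real and their eigenspaces $\mbf{H}$-linear, cf.\ \cite[Appendix B]{SchwedeGlobalSplittings}). Once the geometric argument is in hand, naturality as a functor $(\es{J}^\mbf{H}_n)^\op \times \es{J}^\mbf{H}_n \to \T_\ast$ comes for free, since every construction used\textemdash Thom spaces, direct-sum decompositions of bundles, pullbacks, and polar coordinates\textemdash is functorial with respect to the Thom-space morphisms in $\es{J}^\mbf{H}_n$ inherited from $\es{J}^\mbf{H}$.
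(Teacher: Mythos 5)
Your proposal is correct and is essentially the paper's proof: the paper simply cites \cite[Proposition 1.2]{Weiss} ``replacing the real numbers by the quaternions,'' and your sketch is a faithful expansion of exactly that argument (the splitting $\gamma^\mbf{H}_{n+1}\cong\gamma^\mbf{H}_n\oplus\gamma^\mbf{H}_1$, the identification of $\mathrm{Th}(p^\ast\gamma^\mbf{H}_n(V,W))$ with $\es{J}^\mbf{H}_n(V\oplus\mbf{H},W)\wedge S^{4n}$ via $S\gamma^\mbf{H}_1(V,W)\cong\es{J}^\mbf{H}(V\oplus\mbf{H},W)$, and the standard Thom-space cofiber sequence), with the correct caveats about right $\mbf{H}$-module structures.
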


The morphism spaces in $\es{J}^\mbf{H}_{n+1}$ may also be described relative to the sphere bundle of the $(n+1)$-fold orthogonal complement bundle. Again, this is nothing special about the quaternion case, and holds in other versions of orthogonal calculus, see for example, \cite[\S5]{Weiss} and \cite[Lemma 5.5]{BarnesOman}. 

\begin{lem}
For each $n\in \mbf{N}$, and $V,W \in \es{J}^\mbf{H}$, there is a natural cofiber sequence
\[
S\gamma^\mbf{H}_{n+1}(V, W)_+ \longrightarrow \es{J}^\mbf{H}_0(V,W) \longrightarrow \es{J}^\mbf{H}_{n+1}(V,W),
\]
as functors $(\es{J}^\mbf{H}_0)^\op \times \es{J}^\mbf{H}_0 \to \T_\ast$. 
\end{lem}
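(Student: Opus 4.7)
The plan is to realize this cofiber sequence as a particular instance of the general Thom space cofiber sequence, applied to the $(n+1)$-fold orthogonal complement bundle $\gamma^\mbf{H}_{n+1}(V,W) \to \es{J}^\mbf{H}(V,W)$. The key observation is that $\es{J}^\mbf{H}_0(V,W) = \es{J}^\mbf{H}(V,W)_+$, since $\gamma^\mbf{H}_0(V,W)$ is the $0$-dimensional bundle whose Thom space is just the base with a disjoint basepoint attached, and $\es{J}^\mbf{H}_{n+1}(V,W) = \mathrm{Th}(\gamma^\mbf{H}_{n+1}(V,W))$ by definition.

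First, I would recall the standard Thom space construction: for any finite-dimensional vector bundle $\xi \to X$, write $D\xi$ and $S\xi$ for the associated disc and sphere bundles (formed using any metric on $\xi$, all such choices being canonically homotopy equivalent). The Thom space is $\mathrm{Th}(\xi) = D\xi/S\xi$, and from the pair $(D\xi, S\xi)$ one extracts a cofiber sequence $S\xi \hookrightarrow D\xi \to \mathrm{Th}(\xi)$ of unbased spaces. Adjoining disjoint basepoints produces a cofiber sequence $S\xi_+ \to D\xi_+ \to \mathrm{Th}(\xi)$ in $\T_\ast$, and composing with the (canonical) homotopy equivalence $D\xi \simeq X$ given by the zero section yields
\[
S\xi_+ \longrightarrow X_+ \longrightarrow \mathrm{Th}(\xi).
\]
Applying this to $X = \es{J}^\mbf{H}(V,W)$ and $\xi = \gamma^\mbf{H}_{n+1}(V,W)$ produces exactly the desired sequence.

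For naturality, I would unpack the functoriality of $\gamma^\mbf{H}_{n+1}(V,W)$ in $V, W \in \es{J}^\mbf{H}_0$. A morphism $W \to W'$ in $\es{J}^\mbf{H}_0$ is (a disjoint basepoint, or) a linear isometric embedding $g \colon W \to W'$, which induces a map $f \mapsto g \circ f$ on $\es{J}^\mbf{H}(V,-)$ covered by the natural fiberwise isometric embedding $\mbf{H}^{n+1} \otimes f(V)^\perp \hookrightarrow \mbf{H}^{n+1} \otimes (gf)(V)^\perp$ (since $g$ sends the orthogonal complement of $f(V)$ into the orthogonal complement of $(gf)(V)$). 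Contravariantly, a morphism $V' \to V$ acts by precomposition on the base and by the induced inclusion on orthogonal complements; both assignments respect the disc and sphere subbundles. The cofiber sequence above is therefore natural in bundle maps, and so natural as functors $(\es{J}^\mbf{H}_0)^\op \times \es{J}^\mbf{H}_0 \to \T_\ast$.

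Since the underlying geometry is identical to the real and complex cases treated in \cite[\S5]{Weiss} and \cite[Lemma 5.5]{BarnesOman}, no new obstacle arises from the quaternion setting: the only potential concern would be whether $\gamma^\mbf{H}_{n+1}(V,W)$ is genuinely a locally trivial quaternion (in particular real) vector bundle over $\es{J}^\mbf{H}(V,W)$, which is immediate since $\es{J}^\mbf{H}(V,W) \cong \sf{Sp}(W)/\sf{Sp}(V^\perp)$ is a homogeneous space and the orthogonal complement is compatibly defined along the $\sf{Sp}(W)$-action. Hence the main effort is the (routine) check of naturality; the cofiber sequence itself is a formal consequence of the Thom construction.
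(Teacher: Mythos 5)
Your proposal is correct and is essentially the argument the paper intends: the paper gives no proof but defers to \cite[\S5]{Weiss} and \cite[Lemma 5.5]{BarnesOman}, where the sequence is exactly the Thom-space cofiber sequence $S\xi_+ \to X_+ \to \mathrm{Th}(\xi)$ applied to $\gamma^{\mbf{H}}_{n+1}(V,W)$, with naturality checked on the disc/sphere subbundles as you do.
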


\section{Polynomial functors and the Weiss tower}

Following Definition~\ref{def: polynomial}, a functor $F : \es{J}^\mbf{H} \to \T_\ast$ is polynomial of degree less than or equal $n$ if for each $V \in \es{J}^\mathbf{H}$, the canonical map
\[
F(V) \longrightarrow \holim_{U \in (\es{J}^\mbf{H}_{/\mbf{H}^{n+1}})_0} F(V \oplus U),
\]
is an equivalence. The category $(\es{J}^\mbf{H}_{/\mbf{H}^{n+1}})$ is equivalent to the poset $\es{P}(\mbf{H}^{n+1})$ of subspaces of $\mbf{H}^{n+1}$, hence a functor is $n$-polynomial if and only if the canonical map
\[
F(V) \longrightarrow \underset{U \in \es{P}_0(\mbf{H}^{n+1})}{\holim} F(V \oplus U),
\]
is an equivalence, matching the more familiar definition\footnote{Weiss~\cite{Weiss} and the second author~\cite{TaggartUnitary,TaggartReality} used homotopy limits indexed on topological categories, but this is equivalent to the $\infty$-categorical notion by~\cite{CarrTaggartcolimits} and~\cite{BdB}.} from orthogonal and unitary calculus~\cite[Definition 5.1]{Weiss}. We denote by $\poly{n}(\es{J}^\mbf{H}, \T_\ast)$ the full sub-$\infty$-category of $\Fun(\es{J}^\mbf{H}, \T_\ast)$ spanned by the $n$-polynomial functors.

\subsection{Polynomial approximation}As before we denote by $T_n$ the repeated iteration of the functor $\tau_n : \Fun(\es{J}^\mbf{H}, \T_\ast) \longrightarrow \Fun(\es{J}^\mbf{H}, \T_\ast)$,
given by 
\[
\tau_nF(V) = \underset{U \in (\es{J}^\mbf{H}_{/\mbf{H}^{n+1}})_0}{\holim}~F(V \oplus U). 
\]
In other words, 
\[
T_nF(V) = \hocolim_k (\tau_n)^kF(V). 
\]
In the case of quaternion Weiss calculus we can show that $T_n$ has the required universal property: $T_nF$ is the universal $n$-polynomial functor under $F$. For the $\infty$-categorical version of the following result in orthogonal calculus see for example,~\cite[Theorem 3.2.0.8 and Corollary 3.2.0.10]{Hendrian}.

\begin{prop}
    For each $n \geq 0$, the functor $T_n$ is the left exact left adjoint to the inclusion 
    \[
    \poly{n}(\es{J}^\mbf{H}, \T_\ast) \hookrightarrow \Fun(\es{J}^\mbf{H}, \T_\ast),
    \]
    of the full sub-$\infty$-category of $n$-polynomial functors.
\end{prop}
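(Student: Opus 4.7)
The plan is to follow the template of Weiss~\cite{Weiss,WeissErratum} and its adaptation in the proof of Proposition~\ref{prop: Tn universal}, substituting throughout the quaternion Stiefel combinatorics of Section~\ref{sec: combinatorics}. Three things must be verified: that $T_nF$ takes values in $n$-polynomial functors, that the canonical natural transformation $F\to T_nF$ is initial among maps from $F$ into $n$-polynomial functors, and that $T_n$ preserves finite limits. The first of these is the genuine work; the other two are formal once it is established.

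The main technical step, which I expect to be the core obstacle, is showing that $T_nF$ is $n$-polynomial. Following the corrected proof in~\cite{WeissErratum}, this reduces to a connectivity estimate of the following shape: if $f\colon E\to F$ is a natural transformation for which $f(V)$ is $c$-connected, then $\tau_n f(V)$ is strictly more connected whenever $c$ exceeds a threshold $\alpha(V,n)$ that grows roughly like $4(n+1)\dim_{\mbf{H}}(V)$. I would obtain this by modelling the homotopy limit defining $\tau_n$ as the derived space of sections of the left fibration of complete Segal spaces classified by $F(V\oplus-)$ on $(\es{J}^\mbf{H}_{/\mbf{H}^{n+1}})_0$, in the spirit of the preceding remark. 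A fibre-sequence analysis of natural-transformation spaces as in~\cite[Proposition A.1.1]{DottoEquivariantdiagrams} then converts the question into a statement about the connectivity of the sphere bundles $S\gamma^\mbf{H}_{n+1}(V,-)$, which is itself controlled by the homeomorphism and cofibre sequences established in Section~\ref{sec: combinatorics}. The factor of $4$ in the threshold\textemdash replacing Weiss's factor of $1$ in the real case and the factor of $2$ in the complex case\textemdash is forced by the fact that the unit quaternions form a $3$-sphere, so each quaternion dimension of $V$ contributes cells of real dimension $4$ to the relevant Stiefel manifolds.

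With the connectivity estimate in hand, the argument that $T_nF=\hocolim_k \tau_n^k F$ is $n$-polynomial proceeds exactly as in~\cite[Theorem 6.3]{Weiss}: iteration of $\tau_n$ eventually makes the comparison map $T_nF(V)\to \tau_n T_nF(V)$ arbitrarily highly connected, and one concludes by commuting the sequential homotopy colimit past the homotopy limit defining $\tau_n$. The universal property then follows by the formal argument of loc.~cit.: the map $F\to \tau_n F$ induces an equivalence $\Map(\tau_n F,G)\to \Map(F,G)$ for any $n$-polynomial $G$ (since any such $G$ satisfies $G\simeq \tau_n G$), and hence $\Map(T_nF,G)\simeq \Map(F,G)$, exhibiting $T_n$ as left adjoint to the inclusion $\poly{n}(\es{J}^\mbf{H},\T_\ast)\hookrightarrow\Fun(\es{J}^\mbf{H},\T_\ast)$. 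Finally, left exactness of $T_n$ is automatic: $\tau_n$ is a homotopy limit and hence preserves finite limits in the target, while sequential homotopy colimits in $\T_\ast$ commute with finite homotopy limits.
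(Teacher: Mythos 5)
Your proposal takes essentially the same route as the paper: the paper's proof is a brief appeal to~\cite[Theorem 6.3]{Weiss} and~\cite{WeissErratum} with the quaternion Stiefel combinatorics of Section~\ref{sec: combinatorics} substituted for the real case, and you have correctly unpacked what that entails, including the connectivity threshold (the factor $4$ arising because $\dim_{\mbf{R}}V = 4\dim_{\mbf{H}}V$), the left-fibration model for $\tau_n$, and the appeal to~\cite[Proposition A.1.1]{DottoEquivariantdiagrams}, all of which the paper itself sketches in the remark following Proposition~\ref{prop: Tn universal}. Your explicit treatment of left exactness via commuting the defining sequential colimit past finite limits is also the standard argument the paper leaves implicit.
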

\begin{proof}
    This is equivalent to the statement that for each functor $F$, $T_nF$ is the universal $n$-polynomial functor under $F$. With the Stiefel combinatorics in place this follows readily from~\cite[Theorem 6.3]{Weiss} and \cite{WeissErratum} making the appropriate alternations from real linear algebra to linear algebra over the quaternions. 
\end{proof}

\subsection{The Weiss tower} We conclude this section by constructing the Weiss tower in quaternion calculus. We first record a fact on the relationship between $n$-polynomial functors and $(n+1)$-polynomial functors which ensures that the Weiss tower has the required properties.

\begin{lem}
For each $n \geq 0$, every $n$-polynomial functor is $(n+1)$-polynomial.
\end{lem}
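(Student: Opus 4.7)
The plan is to adapt~\cite[Proposition 3.10]{TaggartUnitary} from unitary calculus to the quaternion setting (which in turn adapts Weiss's original argument in~\cite{Weiss}). The argument is combinatorial, depending only on the ability to decompose finite-dimensional $\mbf{H}$-inner product spaces as direct sums of subspaces and their orthogonal complements, both of which are available from the linear algebra reviewed at the start of Section~\ref{sec: combinatorics} and in~\cite[Appendix B]{SchwedeGlobalSplittings}.

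First I would unfold the definition of $n$-polynomial to the equivalent statement that for every $V$, the canonical map
\[
F(V) \longrightarrow \underset{U \in \es{P}_0(\mbf{H}^{n+1})}{\holim}~F(V \oplus U)
\]
is an equivalence, using the identification $(\es{J}^\mbf{H}_{/\mbf{H}^{n+1}})_0 \simeq \es{P}_0(\mbf{H}^{n+1})$ recorded before Definition~\ref{def: polynomial}. The goal is to establish the analogous equivalence indexed by $\es{P}_0(\mbf{H}^{n+2})$. I would then cover $\es{P}_0(\mbf{H}^{n+2})$ by the sub-posets $L_\ell = \{U \subseteq \mbf{H}^{n+2} : \ell \subseteq U\}$ indexed by quaternionic lines $\ell \in \mbf{H}P^{n+1}$. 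Each $L_\ell$ has $\ell$ as initial object, and any finite intersection $L_{\ell_1} \cap \cdots \cap L_{\ell_k}$ has the sum $\ell_1 + \cdots + \ell_k$ as initial object; hence the homotopy limit of $F(V \oplus -)$ over such an intersection collapses to $F(V \oplus W)$ for $W = \ell_1 + \cdots + \ell_k$. Reassembling these pieces via descent over the nerve of the cover and applying the $n$-polynomial hypothesis on each proper subspace $W \subsetneq \mbf{H}^{n+2}$ should recover $F(V)$ as the total homotopy limit, while the top-dimensional term does not interfere since it arises only over simplices of full overlap.

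The main obstacle is making the cover-by-slices descent step rigorous in our chosen model for $\infty$-categorical homotopy limits, namely the derived section space of a left fibration of complete Segal spaces developed in~\cite{CarrTaggartcolimits}. Concretely, one needs to show that for the cover of the internal poset $\es{P}_0(\mbf{H}^{n+2})$ by the sub-posets $L_\ell$, the derived section space over the whole nerve is the totalization of derived section spaces over the nerves of the pieces and their intersections\textemdash a cosimplicial Bousfield--Kan style descent statement which ought to follow formally from the fact that left fibrations over a Segal space satisfy descent along Segal covers. Once this is granted, Weiss's combinatorial manipulation transfers verbatim to $\mbf{H}$, since at no point does it depend on commutativity of the ground division algebra; the required operations (orthogonal projections onto $\mbf{H}$-subspaces, orthogonal complements, and eigenspace decompositions of self-adjoint endomorphisms with real spectrum) are all legitimate over $\mbf{H}$.
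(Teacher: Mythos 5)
Your proposal does not follow the route the paper takes, and the step that carries all the weight is missing. The paper's proof is a direct transport of Weiss's argument for \cite[Proposition 5.4]{Weiss}, whose inputs are exactly the quaternion Stiefel combinatorics of Section~\ref{sec: combinatorics}: the homeomorphism $\hocolim_{0\neq U\subseteq\mbf{H}^{n+1}}\es{J}^\mbf{H}(U\oplus V,W)\cong S\gamma^\mbf{H}_{n+1}(V,W)$, the cofiber sequence $S\gamma^\mbf{H}_{n+1}(V,W)_+\to\es{J}^\mbf{H}_0(V,W)\to\es{J}^\mbf{H}_{n+1}(V,W)$ (which converts ``$F$ is $n$-polynomial'' into a vanishing statement for the derivative $F^{(n+1)}=\nat(\es{J}^\mbf{H}_{n+1}(V,-),F)$), and the cofiber sequence $\es{J}^\mbf{H}_n(V\oplus\mbf{H},W)\wedge S^{4n}\to\es{J}^\mbf{H}_n(V,W)\to\es{J}^\mbf{H}_{n+1}(V,W)$ (which propagates that vanishing from $F^{(n+1)}$ to $F^{(n+2)}$). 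Your \v{C}ech descent over the cover of $\es{P}_0(\mbf{H}^{n+2})$ by the co-sieves $L_\ell$ is a genuinely different decomposition, and it is also not what \cite[Proposition 3.10]{TaggartUnitary} does, despite your citation of it as the model.

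The concrete gap is the sentence ``applying the $n$-polynomial hypothesis on each proper subspace $W\subsetneq\mbf{H}^{n+2}$ should recover $F(V)$.'' Granting the descent statement, the $k$-th cosimplicial level of your totalization is a derived section space over the configuration space of $(k+1)$-tuples $(\ell_0,\dots,\ell_k)$ of lines in $\mbf{H}^{n+2}$, with fibers $F(V\oplus\mathrm{span}(\ell_0,\dots,\ell_k))$. The $n$-polynomial hypothesis is a statement about the homotopy limit of $F(V'\oplus-)$ over $\es{P}_0(W)$ for a \emph{fixed} $(n+1)$-dimensional $W$; none of your cosimplicial levels has that shape, so there is no term to which the hypothesis applies, and you give no mechanism for converting section spaces over line configurations into such homotopy limits\textemdash that conversion is precisely the geometric content of the $\hocolim\simeq S\gamma$ identification you are trying to avoid. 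The claim that the top-dimensional subspace ``arises only over simplices of full overlap'' is also false as a dimension count: for every $k\geq n+1$ a generic $(k+1)$-tuple of lines already spans $\mbf{H}^{n+2}$, so the value $F(V\oplus\mbf{H}^{n+2})$ appears over a dense open stratum of every cosimplicial level from degree $n+1$ onward and cannot be discarded. Finally, the descent step itself is not formal in the way you suggest: the family $(\ell_0,\dots,\ell_k)\mapsto F(V\oplus\mathrm{span}(\ell_i))$ is not a fibration over $(\mbf{H}P^{n+1})^{k+1}$ because the span jumps in dimension across diagonal strata, so identifying the \v{C}ech levels with the section spaces you describe already requires the full left-fibration machinery. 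Your closing remark that the linear algebra transfers verbatim to $\mbf{H}$ is correct and is the point of the paper's one-line proof, but the combinatorial manipulation you would be transferring is not the one you have described.
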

\begin{proof}
    The proof follows that of~\cite[Proposition 5.4]{Weiss} replacing the real Stiefel combinatorics with the relevant quaternion version, see Section~\ref{sec: combinatorics}.
\end{proof}

The inclusion $\mbf{H}^n \hookrightarrow \mbf{H}^{n+1}$ induces a natural transformation $T_{n+1} \to T_{n}$, and hence a Weiss tower 
\[\begin{tikzcd}
	&& F \\
	\cdots & {T_nF} & \cdots & {T_1F} & {T_0F}
	\arrow[from=2-4, to=2-5]
	\arrow[from=2-2, to=2-3]
	\arrow[from=2-3, to=2-4]
	\arrow[bend right=30, from=1-3, to=2-2]
	\arrow[from=2-1, to=2-2]
	\arrow[bend left=30, from=1-3, to=2-4]
	\arrow[bend left=20, from=1-3, to=2-5]
\end{tikzcd}\]
under $F$. To complete our discussion on quaternion Weiss calculus we classify the layers of the tower, i.e., functors
\[
D_nF = \fiber(T_nF \longrightarrow T_{n-1}F).
\]
As with the other versions of Weiss calculus, the fact that $(n-1)$-polynomial functors are $n$-polynomial implies that $D_nF$ satisfies the following definition.

\begin{definition}
    A functor $F: \es{J}^\mbf{H} \to \T_\ast$, is \emph{homogeneous of degree $n$} or equivalently, $n$-homogeneous if $F$ is $n$-polynomial and $T_{n-1}F$ is trivial. We denote by $\homog{n}(\es{J}^\mbf{H}, \T_\ast)$ the full sub-$\infty$-category of $\Fun(\es{J}^\mbf{H}, \T_\ast)$ spanned by the $n$-homogeneous functors.
\end{definition}

\section{Derivatives and the classification of homogeneous functors}
To understand the homotopy type of a functor $F$ from the associated Weiss tower we will classify the layers of the Weiss tower in terms of spectra with an action of $\mathsf{Sp}(n)=\Aut(\mbf{H}^n)$, analogous to the orthogonal calculus classification of the layers of the orthogonal Weiss tower in terms spectra with an action of $O(n)=\Aut(\mbf{R}^n)$.

\subsection{Derivatives}
We now explain how to each functor $F: \es{J}^\mbf{H} \to \T_\ast$ one may associate a spectrum with an action of $\Aut(\mbf{H}^n) = \mathsf{Sp}(n)$, called the $n$-th derivative of $n$-th coefficient spectrum of $F$. We take a slightly different (but equivalent) approach than Weiss' original approach.

\begin{definition}\label{def: derivative}
Let $F: \es{J}^\mbf{H} \to \T_\ast$. The $n$-th derivative of $F$, denoted $F^{(n)}$ is the functor defined as
\[
F^{(n)}(V) := \fib ( F(V) \longrightarrow \tau_{n-1}F(V)).
\]
\end{definition}

This defines a functor $F^{(n)} : \es{J}^\mbf{H} \to \T_\ast$, and we now show that the $n$-th derivative is naturally a ``spectrum of multiplicity $4n$'' by constructing structure maps. 

\begin{prop}
The $n$-th derivative of $F: \es{J}^\mbf{H} \to \T_\ast$ has structure maps of the form
\[
S^{4n} \wedge F^{(n)}(V) \longrightarrow F^{(n)}(V \oplus \mbf{H}).
\]
\end{prop}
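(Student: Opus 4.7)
The strategy I would take is to upgrade $F^{(n)}$ from a functor on $\es{J}^{\mbf{H}}$ to a functor on the Thom-space category $\es{J}_n^{\mbf{H}}$, entirely mirroring the analogous orthogonal and unitary arguments. Once this upgrade is in place, the required structure map is essentially tautological: pick out the sphere $S^{4n}$ sitting naturally inside $\es{J}_n^{\mbf{H}}(V, V\oplus\mbf{H})$ as the one-point compactification of the fiber of the bundle $\gamma_n^{\mbf{H}}(V, V\oplus\mbf{H})$ over the standard inclusion $f_0: V \hookrightarrow V\oplus\mbf{H}$. This fiber is $\mbf{H}^n \otimes f_0(V)^\perp \cong \mbf{H}^n \otimes \mbf{H} \cong \mbf{H}^n$, a real vector space of dimension precisely $4n$, and its one-point compactification embeds canonically into the Thom space of the total bundle.

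The first main task is to rephrase $F^{(n)}(V) = \fib(F(V) \to \tau_{n-1}F(V))$ in a form where the $\es{J}_n^{\mbf{H}}$-functoriality is manifest. Here I would leverage the Stiefel combinatorics of Section~\ref{sec: combinatorics}: the homeomorphism
\[
\underset{0 \neq U \subseteq \mbf{H}^n}{\hocolim}~\es{J}^{\mbf{H}}(U \oplus V, W) \;\simeq\; S\gamma_n^{\mbf{H}}(V, W)
\]
allows one (after a cofinality argument matching $(\es{J}^{\mbf{H}}_{/\mbf{H}^n})_0$ with the poset $\es{P}_0(\mbf{H}^n)$) to model $\tau_{n-1}F(V)$ as the derived space of sections of a fibration over $S\gamma_n^{\mbf{H}}(V, V \oplus \mbf{H}^n)$ whose fibers compute $F(V \oplus U)$. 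The fiber of the constant-section map $F(V) \to \tau_{n-1}F(V)$ then becomes a space of sections over the \emph{Thom space} of $\gamma_n^{\mbf{H}}(V, V \oplus \mbf{H}^n)$ relative to the basepoint, which depends functorially on morphisms in $\es{J}_n^{\mbf{H}}$.

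Having $F^{(n)}$ as a functor on $\es{J}_n^{\mbf{H}}$, the action map $\es{J}_n^{\mbf{H}}(V, V \oplus \mbf{H}) \wedge F^{(n)}(V) \to F^{(n)}(V \oplus \mbf{H})$ precomposed with the sphere inclusion described above delivers the desired structure map
\[
S^{4n} \wedge F^{(n)}(V) \longrightarrow \es{J}_n^{\mbf{H}}(V, V\oplus\mbf{H}) \wedge F^{(n)}(V) \longrightarrow F^{(n)}(V\oplus\mbf{H}).
\]

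The step I expect to be the main obstacle is the upgrade of functoriality in the preceding paragraph, i.e., showing that the Thom-space-valued $\es{J}_n^{\mbf{H}}$-action on $F^{(n)}$ is well-defined and compatible with composition. This is the quaternionic analogue of Weiss's original construction in~\cite[\S 2]{Weiss}, and requires careful manipulation of the homotopy limit in $\tau_{n-1}F$ using the cofiber sequence
\[
\es{J}^{\mbf{H}}_n(V\oplus\mbf{H}, W) \wedge S^{4n} \longrightarrow \es{J}^{\mbf{H}}_n(V,W) \longrightarrow \es{J}^{\mbf{H}}_{n+1}(V,W)
\]
from Section~\ref{sec: combinatorics} to patch the section-space description across different $V$. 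Once this bookkeeping is carried out\textemdash and linear-algebraically the quaternions behave here exactly as the reals and complex numbers do, modulo replacing $n$ by $4n$\textemdash the construction of the structure map, and the verification that it is natural in $V$, follow formally.
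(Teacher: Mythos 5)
Your proposal is correct and follows essentially the same route as the paper: the paper identifies $F^{(n)}(V)$ with $\nat(\es{J}_n^{\mbf{H}}(V,-),F)$ (the quaternionic analogue of Weiss's Proposition 5.3), deduces that $F^{(n)}$ is the restriction of the right Kan extension of $F$ along $\es{J}^{\mbf{H}}\hookrightarrow\es{J}_n^{\mbf{H}}$, and then restricts the resulting evaluation map $\es{J}_n^{\mbf{H}}(V,V\oplus\mbf{H})\wedge F^{(n)}(V)\to F^{(n)}(V\oplus\mbf{H})$ along the inclusion of $S^{4n}$ into $\es{J}_n^{\mbf{H}}(V,V\oplus\mbf{H})$, exactly as you describe. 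The only small slip is that the cofiber sequence needed to pass from the section-space description of $\tau_{n-1}F$ to the Thom-space corepresentable is $S\gamma_n^{\mbf{H}}(V,W)_+\to\es{J}_0^{\mbf{H}}(V,W)\to\es{J}_n^{\mbf{H}}(V,W)$ rather than the suspension cofiber sequence you quote, which instead relates derivatives of consecutive orders.
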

\begin{proof}
This is essentially a rephrasing of~\cite[Proposition 2.2]{Weiss}. By the quaternion version of~\cite[Proposition 5.3]{Weiss} we can write
\[
F^{(n)}(V) = \nat(\es{J}_n(V,-), F),
\]
using the quaternion Stiefel combinatorics of Section~\ref{sec: combinatorics}. In particular, the quaternion version of~\cite[Proposition 2.1]{Weiss} follows and hence $F^{(n)}$ is the (restriction to $\es{J}^\mbf{H}$) of the right Kan extension of $F$ along the inclusion $\es{J}^\mbf{H} \hookrightarrow \es{J}_n^\mbf{H}$. From this, we obtain evaluation maps
\[
\es{J}_n^\mbf{H}(V,V \oplus \mbf{H}) \wedge F^{(n)}(V) \longrightarrow F^{(n)}(V \oplus \mbf{H}),
\]
and by identifying $S^{4n}$ as a subspace of $\es{J}_n^\mbf{H}(V,V \oplus \mbf{H})$, the structure maps follow.
\end{proof}

\begin{cor}
If $F$ is $n$-polynomial, then the adjoint structure maps are equivalences, i.e., the maps
\[
F^{(n)}(V) \longrightarrow \Omega^{4n}F^{(n)}(V \oplus \mbf{H}),
\]
are equivalences for each $V$.
\end{cor}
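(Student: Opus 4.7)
The plan is to situate the adjoint structure map inside a fiber sequence whose first term vanishes under $n$-polynomiality. The key ingredients are already available in the paper: the identification $F^{(n)}(V)\simeq \nat(\es{J}_n^{\mbf{H}}(V,-),F)$ invoked in constructing the structure maps, and the natural cofiber sequence
\[
\es{J}_n^{\mbf{H}}(V\oplus \mbf{H},-)\wedge S^{4n}\longrightarrow \es{J}_n^{\mbf{H}}(V,-)\longrightarrow \es{J}_{n+1}^{\mbf{H}}(V,-)
\]
of pointed functors on $\es{J}_n^{\mbf{H}}$, whose left-hand map is precomposition with the tautological inclusion $S^{4n}\hookrightarrow \es{J}_n^{\mbf{H}}(V,V\oplus\mbf{H})$. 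Applying $\nat(-,F)$ to this cofiber sequence and using the smash--loop adjunction to identify $\nat(\es{J}_n^{\mbf{H}}(V\oplus\mbf{H},-)\wedge S^{4n},F)\simeq \Omega^{4n}F^{(n)}(V\oplus\mbf{H})$ yields a fiber sequence
\[
\nat\bigl(\es{J}_{n+1}^{\mbf{H}}(V,-),F\bigr)\longrightarrow F^{(n)}(V)\longrightarrow \Omega^{4n}F^{(n)}(V\oplus\mbf{H}).
\]

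Next I would verify two facts about this fiber sequence. First, applying the Kan-extension identification one degree higher gives $\nat(\es{J}_{n+1}^{\mbf{H}}(V,-),F)\simeq F^{(n+1)}(V)=\fib(F(V)\to \tau_n F(V))$, so the leftmost term is $F^{(n+1)}(V)$. Second, the middle-to-right map coincides with the adjoint structure map: under the Yoneda identification, precomposition with the tautological element $S^{4n}\hookrightarrow\es{J}_n^{\mbf{H}}(V,V\oplus\mbf{H})$ is precisely the evaluation map from which the structure maps of the preceding proposition were built.

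To finish, assume $F$ is $n$-polynomial. Then $F\to \tau_n F$ is an equivalence, so $F^{(n+1)}(V)$ is contractible, and the fiber sequence collapses to the desired equivalence $F^{(n)}(V)\xrightarrow{\ \simeq\ }\Omega^{4n}F^{(n)}(V\oplus\mbf{H})$. I expect the main obstacle to be the bookkeeping in the second step: a careful matching of the cofiber-sequence morphism with the explicit evaluation construction of the structure map, plus a check that the smash--loop adjunction is being applied to the correct adjoint. Once those compatibilities are settled, the conclusion is formal: $n$-polynomiality is exactly the hypothesis needed to kill $F^{(n+1)}$, which is the obstruction to the adjoint structure map being an equivalence.
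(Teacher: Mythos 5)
Your proof is correct, and it is essentially the standard argument from Weiss's original treatment (cf.\ the discussion around Proposition~5.3 and Example~5.7 in~\cite{Weiss}), which the paper implicitly relies on in stating the corollary without proof. You correctly bring together the two ingredients the paper has already assembled\textemdash the Kan-extension/representability description $F^{(n)}(V)\simeq\nat(\es{J}_n^{\mbf{H}}(V,-),F)$ and the cofiber sequence of representables from the Stiefel-combinatorics section\textemdash and correctly observe that $n$-polynomiality is exactly the vanishing of $F^{(n+1)}(V)=\fib(F(V)\to\tau_nF(V))$. The one place that deserves the care you flag is the compatibility check that the second map in your fiber sequence really is the adjoint of the evaluation-map construction in the preceding proposition; this is a routine Yoneda diagram chase and presents no obstacle.
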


\begin{rem}\label{rem: spectra with multiplicity}
Analysing this right Kan extension description for the $n$-th derivative of $F$, one sees that $F^{(n)}$ is naturally a module over the commutative monoid 
\[
n\bb{S} : V \longmapsto S^{nV} = S^{\mbf{H}^n \otimes V},
\]
in the category of $\mathsf{Sp}(n)$-equivariant functors from $\es{J}^\mbf{H}$ to $\mathsf{Sp}(n)$-equivariant pointed spaces. We won't make use of this perspective, but the arguments of Barnes and Oman~\cite{BarnesOman} or the second author~\cite{TaggartUnitary, TaggartReality} extend with little work to the quaternion situation. 
\end{rem}

Straightforward stable homotopy manipulations can, from a spectrum of multiplicity $4n$, produce a spectrum of multiplicity $1$, i.e., an ``honest'' spectrum, by ``inserting loops''. Details of this construction can be found in~\cite[\S2, \S3]{Weiss}. One way to view this is by viewing spectra with an action of $\mathsf{Sp}(n)$ as modules over $\bb{S} = 1\bb{S}$ and employ a ``extension-of-scalars'' adjunction. This latter is the approach used by Barnes and Oman~\cite[\S8]{BarnesOman} and the second author~\cite[\S6]{TaggartUnitary},\cite[\S6]{TaggartReality}.

\subsection{Classification of homogeneous functors} We now produce an equivalence of $\infty$-categories between the $\infty$-category of $n$-homogeneous functors and the $\infty$-category of spectra with an action of $\mathsf{Sp}(n)$. For the orthogonal calculus version of the following statement using the language of $\infty$-categories see for example,~\cite[Theorem 3.3.0.6]{Hendrian}.

\begin{thm}\label{thm: classification of homog}
    There is an equivalence of $\infty$-categories
    \[
    \homog{n}(\es{J}^\mbf{H}, \T_\ast) \simeq \s^{B\mathsf{Sp}(n)},
    \]
    between the $\infty$-category of $n$-homogeneous functors and the $\infty$-category of spectra with an action of the symplectic group $\mathsf{Sp}(n)$.
\end{thm}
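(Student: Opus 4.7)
The plan is to mirror the orthogonal calculus classification of Weiss~\cite[Theorem 7.3]{Weiss}, as reformulated $\infty$-categorically in~\cite[Theorem 3.3.0.6]{Hendrian} and adapted to unitary calculus by the second named author~\cite{TaggartUnitary}. We construct a pair of functors between $\homog{n}(\es{J}^\mathbf{H}, \T_\ast)$ and $\s^{B\mathsf{Sp}(n)}$ and show that they are mutually inverse equivalences.

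In one direction, define
\[
\varepsilon_n : \s^{B\mathsf{Sp}(n)} \longrightarrow \Fun(\es{J}^\mathbf{H}, \T_\ast), \qquad \Psi \longmapsto \bigl(V \mapsto \Omega^\infty((\Psi \wedge S^{\mathbf{H}^n \otimes V})_{h\mathsf{Sp}(n)})\bigr),
\]
where $\mathsf{Sp}(n)$ acts diagonally through its given action on $\Psi$ and through the standard action on $\mathbf{H}^n$. First I would verify, using the quaternion Stiefel cofiber sequences of Section~\ref{sec: combinatorics}, that $\varepsilon_n(\Psi)$ is $n$-homogeneous; this reduces to a computation formally identical to its orthogonal counterpart, once one accounts for $4n$-fold suspensions (arising from $\dim_\mathbf{R}\mathbf{H}=4$) replacing $n$-fold suspensions. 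In the other direction, send $F$ to its $n$-th derivative $F^{(n)}$ of Definition~\ref{def: derivative}, equipped with the natural $\mathsf{Sp}(n) = \Aut(\mathbf{H}^n)$-action inherited from the identification $F^{(n)}(V) = \nat(\es{J}_n^\mathbf{H}(V,-), F)$. As indicated in Remark~\ref{rem: spectra with multiplicity}, $F^{(n)}$ is naturally a module over the $\mathsf{Sp}(n)$-equivariant ring spectrum $n\bb{S}$, and extension of scalars along the unit map $\bb{S} \to n\bb{S}$ produces an honest $\mathsf{Sp}(n)$-spectrum $\Theta_n(F)$, following the recipe of~\cite[\S 8]{BarnesOman}.

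The remaining work is to verify that both composites $\varepsilon_n\Theta_n$ and $\Theta_n\varepsilon_n$ are naturally equivalent to the identity on the appropriate subcategories. That $\Theta_n\varepsilon_n(\Psi)\simeq \Psi$ is a formal computation: using the right-Kan-extension description of the derivative (as in the proof of the structure-map proposition) together with the fact that taking derivatives commutes with the relevant homotopy orbits and with $\Omega^\infty$, one reads off $\Psi$ from $\varepsilon_n(\Psi)^{(n)}$ after extension of scalars. The main obstacle is the opposite composite: that for any $n$-homogeneous $F$, the natural map $\varepsilon_n\Theta_n(F)\to F$ is an equivalence. This is the core content of the theorem and encodes the non-formal assertion that an $n$-homogeneous functor is reconstructed from its $n$-th derivative via the homotopy-orbit formula. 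The crucial simplifying input is the corollary following the structure-map proposition: for $n$-polynomial $F$ the derivative is already an $\Omega^{4n}$-spectrum, so no further stabilization is required and $\Theta_n(F)$ genuinely captures the ``coefficient'' of $F$. With this in hand, the argument proceeds along the blueprint of Weiss~\cite[\S 7]{Weiss} (compare also~\cite[\S 9]{BarnesOman} and~\cite[\S 6]{TaggartUnitary}), with real Stiefel manifolds and $O(n)$-actions systematically replaced by their quaternionic analogs and $\mathsf{Sp}(n)$-actions from Section~\ref{sec: combinatorics}.
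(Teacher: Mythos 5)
Your proposal is correct and follows essentially the same route as the paper: both define the functor $\Theta \mapsto \Omega^\infty[(S^{\mbf{H}^n\otimes V}\wedge\Theta)_{h\mathsf{Sp}(n)}]$, identify its essential image with the $n$-homogeneous functors by transporting Weiss's~\cite[Theorem 7.3]{Weiss} argument to the quaternionic Stiefel combinatorics, and recover the inverse via the $n$-th derivative and extension of scalars as in Remark~\ref{rem: spectra with multiplicity}. Your write-up is in fact slightly more explicit than the paper's (which is a citation-proof), and it correctly uses $h\mathsf{Sp}(n)$ where the paper's displayed formula has a typographical $hO(n)$.
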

\begin{proof}
Define a functor
\[
\s^{B\mathsf{Sp}(n)} \longrightarrow \Fun(\es{J}^\mbf{H}, \T_\ast), \qquad \Theta \longmapsto \Omega^\infty[(S^{nV} \wedge \Theta)_{hO(n)}].
\]
The content of~\cite[Theorem 7.3]{Weiss} apply to the quaternion case almost verbatim, allowing for us to identify the essential image of this functor with $\homog{n}(\es{J}^\mbf{H}, \T_\ast)$, and claim that the restricted functor
\[
\s^{B\mathsf{Sp}(n)} \longrightarrow \homog{n}(\es{J}^\mbf{H}, \T_\ast),\qquad \Theta \longmapsto \Omega^\infty[(S^{nV} \wedge \Theta)_{hO(n)}].
\]
is an equivalence of $\infty$-categories. 
\end{proof}

\begin{cor}
Denote by $\partial_n$ the composite of 
\[
\Fun(\es{J}^\mbf{H}, \T_\ast) \xrightarrow{\ D_n \ } \homog{n}(\es{J}^\mbf{H}, \T_\ast) \xrightarrow{ \ \simeq \ } \s^{B\mathsf{Sp}(n)},
\]
of $D_n$ with the inverse of the equivalence of Theorem~\ref{thm: classification of homog}. For each quaternion functor $F$, the spectrum $\partial_nF$ is equivalent to the $n$-th derivative of $F$ viewed as a spectrum with an action of $\mathsf{Sp}(n)=\Aut(\mbf{H}^n)$ in the sense of Remark~\ref{rem: spectra with multiplicity}.
\end{cor}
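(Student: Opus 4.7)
The plan is to show: (1) the inverse of the classification equivalence of Theorem~\ref{thm: classification of homog} sends an $n$-homogeneous functor $G$ to the honest spectrum with $\mathsf{Sp}(n)$-action associated to $G^{(n)}$ by the extension-of-scalars procedure described in Remark~\ref{rem: spectra with multiplicity}; and (2) there is a natural equivalence $(D_nF)^{(n)} \simeq F^{(n)}$ of spectra (of multiplicity $4n$) with $\mathsf{Sp}(n)$-action. Together these identify $\partial_n F$ with the honest spectrum built from $F^{(n)}$, giving the corollary.

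For (1), I would unpack the proof of Theorem~\ref{thm: classification of homog}: following the quaternion adaptation of Weiss's Theorem 7.3, the natural equivalence $G(V) \simeq \Omega^\infty[(S^{nV} \wedge G^{(n)})_{h\mathsf{Sp}(n)}]$ witnessing the classification is constructed directly from the $n$-th derivative of $G$. Consequently the inverse equivalence $\homog{n}(\es{J}^\mbf{H},\T_\ast) \xrightarrow{\simeq} \s^{B\mathsf{Sp}(n)}$ sends $G$ to the honest spectrum obtained from $G^{(n)}$ via the procedure of Remark~\ref{rem: spectra with multiplicity}, so that $\partial_n F$ is identified with the spectrum built from $(D_nF)^{(n)}$.

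For (2), I would proceed in two stages. First, apply $(-)^{(n)}$ to the fiber sequence $D_nF \to T_nF \to T_{n-1}F$. Because $(-)^{(n)} = \fib(\id \to \tau_{n-1})$ is a homotopy fiber of two limit-preserving functors, it preserves fiber sequences, yielding
\[
(D_nF)^{(n)} \longrightarrow (T_nF)^{(n)} \longrightarrow (T_{n-1}F)^{(n)}.
\]
Since $T_{n-1}F$ is $(n-1)$-polynomial, the canonical map $T_{n-1}F \to \tau_{n-1}T_{n-1}F$ is an equivalence, so $(T_{n-1}F)^{(n)} \simeq \ast$ and hence $(D_nF)^{(n)} \simeq (T_nF)^{(n)}$. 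Second, show $F^{(n)} \simeq (T_nF)^{(n)}$: this is the quaternion analogue of an invariance result in orthogonal calculus. Using the right Kan extension description $F^{(n)}(V) \simeq \nat(\es{J}^\mbf{H}_n(V,-),F)$, the map $F \to T_nF$ induces the comparison, and one verifies it is an equivalence by writing $T_nF = \hocolim_k \tau_n^k F$ and commuting the homotopy limits defining $\tau_n$ and $\tau_{n-1}$ over $(\es{J}^\mbf{H}_{/\mbf{H}^{n+1}})_0$ and $(\es{J}^\mbf{H}_{/\mbf{H}^{n}})_0$ by a Fubini-type argument.

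The main obstacle is the final invariance $F^{(n)} \simeq (T_nF)^{(n)}$. In the orthogonal setting this is essentially the content of the machinery surrounding~\cite[Theorem 6.3]{Weiss} and~\cite{WeissErratum}. The Stiefel combinatorics of Section~\ref{sec: combinatorics} provide exact quaternion analogues of the linear-algebra inputs, and the model of homotopy (co)limits as (sections of) left fibrations of complete Segal spaces (as discussed in the remarks following Proposition~\ref{prop: Tn universal} and worked out in~\cite{CarrTaggartcolimits}) supplies the connectivity bound needed to pass $(-)^{(n)}$ through the sequential colimit $\hocolim_k \tau_n^k F$, so the orthogonal arguments transfer with only notational modification.
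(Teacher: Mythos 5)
Your steps (1) and (2, first half) are sound: unpacking the quaternion version of Weiss's Theorem 7.3 does identify the inverse of the classification equivalence with the assignment sending an $n$-homogeneous $G$ to the honest spectrum obtained from $G^{(n)}$ via Remark~\ref{rem: spectra with multiplicity}, and since $(-)^{(n)}=\fib(\id\to\tau_{n-1})$ preserves fiber sequences and $(T_{n-1}F)^{(n)}\simeq\ast$, you correctly get a levelwise equivalence $(D_nF)^{(n)}\simeq (T_nF)^{(n)}$. The gap is your final claim that $F^{(n)}\to (T_nF)^{(n)}$ is a levelwise equivalence. The Fubini argument you propose (commuting $\tau_{n-1}$ past $\tau_n$ and past the sequential colimit) only yields $(T_nF)^{(n)}\simeq T_n(F^{(n)})$; to conclude an equivalence with $F^{(n)}$ you would additionally need $F^{(n)}$ to be $n$-polynomial, which fails in general. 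The paper's own application refutes the levelwise statement: for $F=\mathsf{BSp}(-)$ and $n=1$ one has $F^{(1)}(V)\simeq\Sigma^3S^{V}$, which is reduced but not $1$-polynomial (a reduced $1$-polynomial functor is $1$-homogeneous, hence of the form $\Omega^\infty[(S^{4\dim_{\mbf{H}}V}\wedge\Theta)_{h\mathsf{Sp}(1)}]$, and $S^{4r+3}$ is not of this form \textemdash its $T_1$ is $Q\Sigma^3\mbf{H}P^\infty_r$), so $F^{(1)}\to(T_1F)^{(1)}$ is not an equivalence of functors.

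What the corollary actually asserts, and all that it needs, is an equivalence after applying the stabilization of Remark~\ref{rem: spectra with multiplicity}: both $F^{(n)}$ and $(T_nF)^{(n)}\simeq T_n(F^{(n)})$ are $4n\bb{S}$-modules, and the comparison map is an ``$n$-stable equivalence,'' i.e., induces an isomorphism on $\colim_k\pi_{*+4nk}(-)(\mbf{H}^k)$, hence an equivalence of the associated honest spectra with $\mathsf{Sp}(n)$-action. This is not a formal Fubini statement but a connectivity estimate: writing $\fib(E(V)\to\tau_nE(V))\simeq\nat(\es{J}^{\mbf{H}}_{n+1}(V,-),E)$ for an $n\bb{S}$-module $E$, the connectivity of $\es{J}^{\mbf{H}}_{n+1}(V,W)$ grows like $4(n+1)\dim_{\mbf{H}}V$, which outpaces the $4n\dim_{\mbf{H}}V$ loops taken in forming the associated spectrum, so the telescope defining $T_n$ becomes invisible after stabilization. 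This is the quaternion analogue of the estimates in~\cite{WeissErratum} and of the comparison of the stable and $n$-homogeneous structures in~\cite[\S8]{BarnesOman}. Replacing your levelwise claim with this stable one, the rest of your argument goes through.
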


% \begin{rem}
% Following Barnes and Oman~\cite{BarnesOman} or work of the second author~\cite{TaggartUnitary,TaggartReality}, one can produce models for the $\infty$-categories of $n$-homogeneous functors (as a right Bousfield localization of a model structure representing $n$-polynomial functors) and spectra with an action of $\Aut(\mbf{H}^n) = \mathsf{Sp}(n)$, and a zigzag of Quillen equivalences relating them. The key observation that the $n$-homogeneous model structure is Quillen equivalent to an intermediate category of ``spectra of multiplicity $n$'' is an adaptation of the arguments of Weiss~\cite[Theorem 7.3]{Weiss} classifying $n$-homogeneous functors up to homotopy. Some of the beauty in the $\infty$-categorical arguments are they match almost verbatim with the original arguments of Weiss.
% \end{rem}

\begin{cor}
   For each $n \geq 1$ and each symplectic functor $F: \es{J}^\mbf{S} \to \T_\ast$ there is an equivalence
    \[
    D_nF(V) \simeq \Omega^\infty[(S^{nV} \wedge \partial_nF)_{h\mathsf{Sp}(n)}].
    \]
\end{cor}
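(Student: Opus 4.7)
The plan is to reduce this statement directly to Theorem~\ref{thm: classification of homog}, treating the corollary as a routine unpacking of the definition of $\partial_n$. First I would verify that $D_nF$ is $n$-homogeneous: it is $n$-polynomial because both $T_nF$ and $T_{n-1}F$ are $n$-polynomial (the latter via the preceding lemma that every $(n-1)$-polynomial functor is $n$-polynomial) and because the full sub-$\infty$-category $\poly{n}(\es{J}^\mbf{H},\T_\ast)$ is closed under finite limits inside $\Fun(\es{J}^\mbf{H},\T_\ast)$. Moreover $T_{n-1}D_nF$ is contractible, since $T_{n-1}$ is a left exact left adjoint and therefore preserves homotopy fibers, and the map $T_{n-1}(T_nF) \to T_{n-1}(T_{n-1}F)$ is an equivalence: both sides identify with $T_{n-1}F$ by the universal property of $T_{n-1}$ applied to $n$-polynomial and $(n-1)$-polynomial functors respectively.

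With $D_nF$ located in $\homog{n}(\es{J}^\mbf{H},\T_\ast)$, the next step is to invoke Theorem~\ref{thm: classification of homog}. This produces a spectrum $\Xi \in \s^{B\mathsf{Sp}(n)}$, determined up to essentially unique equivalence, together with an equivalence
\[
D_nF(V) \simeq \Omega^\infty\bigl[(S^{nV}\wedge\Xi)_{h\mathsf{Sp}(n)}\bigr].
\]
Because $\partial_n$ was defined as the composite of $D_n$ with the \emph{inverse} of precisely this classification equivalence, we automatically obtain $\partial_nF \simeq \Xi$ in $\s^{B\mathsf{Sp}(n)}$. Substituting $\partial_nF$ for $\Xi$ on the right-hand side yields the claimed formula.

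I do not anticipate any significant obstacle. The two substantive ingredients---that $D_nF$ is $n$-homogeneous, and that the explicit formula $\Xi \mapsto \Omega^\infty[(S^{nV}\wedge \Xi)_{h\mathsf{Sp}(n)}]$ realises one direction of the classification equivalence---are either immediate from the standard fiber-sequence manipulations for the $T_n$ functor, or are already built into the statement of Theorem~\ref{thm: classification of homog}. In this sense the corollary is a direct unpacking of the definition of $\partial_n$ once the classification theorem is in hand, and the argument is entirely parallel to its analogues in orthogonal, unitary, and unitary-with-reality calculi.
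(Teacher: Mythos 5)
Your proposal is correct and matches the paper's (implicit) argument: the corollary is stated without proof precisely because it is the immediate unpacking of the definition of $\partial_n$ as $D_n$ followed by the inverse of the classification equivalence of Theorem~\ref{thm: classification of homog}, together with the already-noted fact that $D_nF$ is $n$-homogeneous. Your verification that $D_nF$ lands in $\homog{n}(\es{J}^\mbf{H},\T_\ast)$ via left exactness of $T_{n-1}$ is exactly the standard fiber-sequence argument the paper alludes to.
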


\section{Comparing quaternion calculus with orthogonal and unitary calculus}
It is often useful to have methods for transporting problems through various versions of functor calculus. Observations of this kind were key to Arone's~\cite{AroneAkfree} work on finite type $n$ spectra, and to Behrens'~\cite{BehrensEHP} work on the relationship between the EHP sequence and Goodwillie calculus. In the series of papers~\cite{TaggartOCUC, TaggartUCReality, TaggartOCReality}, the second author proved that calculus with reality is a ``Galois extension'' of orthogonal calculus. In this section we add quaternion calculus to this list of comparison theorems by providing a comparison between quaternion calculus with unitary calculus, and hence also with orthogonal calculus.

There are adjunctions
\[\begin{tikzcd}
	{\es{J}^\mbf{O}} && {\es{J}^\mbf{U}} && {\es{J}^\mbf{H}},
	\arrow["{\mbf{C} \otimes_\R (-)}", shift left=2, from=1-1, to=1-3]
	\arrow["r", shift left=2, from=1-3, to=1-1]
	\arrow["{\mbf{H} \otimes_\mbf{C} (-)}", shift left=2, from=1-3, to=1-5]
	\arrow["h", shift left=2, from=1-5, to=1-3]
\end{tikzcd}\]
between the indexing categories for the various calculi. In~\cite{TaggartOCUC},  the second author provided an account of the relationship between orthogonal and unitary calculus showing that Weiss towers restrict along the realification functor $r: \es{J}^\mbf{U} \to \es{J}^\mbf{O}$. In this section, we prove the analogous statement for the functor
\[
h: \es{J}^\mbf{H} \longrightarrow \es{J}^\mbf{U}, \mbf{H}^r \longmapsto \mbf{C}^{2r}.
\]
The corresponding statements for the composite $rh$ will follow by combining the work of the second author~\cite{TaggartOCUC} with the work contained in this section. We leave the formulation of such statements to the interested reader. In fact, many of the arguments here are essentially replications of arguments used by the second author in~\cite{TaggartOCUC}, with slight modifications to improve some of the results.

\subsection{Polynomial functors}
The argument follows the now somewhat standard method for comparing calculi~\cite{BarnesEldred, TaggartOCUC}, by working along the Weiss tower. As such, we begin with the $n$-homogeneous functors.

\begin{lem}
    The functor $h^\ast$ preserves $n$-homogeneous functors, i.e., the functor $h^\ast$ induces a functor
    \[
    h^\ast : \homog{n}(\es{J}^\mbf{U}, \T_\ast) \longrightarrow \homog{n}(\es{J}^\mbf{H}, \T_\ast).
    \]
\end{lem}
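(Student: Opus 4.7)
My plan is to deduce that $h^*F$ is $n$-homogeneous by verifying directly the two defining properties: $n$-polynomiality and triviality of $T_{n-1}(h^*F)$. The strategy is to exploit the explicit classification of $n$-homogeneous unitary functors (the unitary analogue of Theorem~\ref{thm: classification of homog}) to reduce both verifications to calculations with a concrete formula.

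By the classification, any $n$-homogeneous unitary $F$ admits an equivalence
\[
F(V) \simeq \Omega^\infty\bigl[(S^{nV} \wedge \Theta)_{hU(n)}\bigr]
\]
for some $U(n)$-spectrum $\Theta$, where $S^{nV}$ denotes the one-point compactification of $\mbf{C}^n \otimes_\mbf{C} V$. Pulling back along $h$ yields the formula
\[
h^*F(V) \simeq \Omega^\infty\bigl[(S^{nh(V)} \wedge \Theta)_{hU(n)}\bigr]
\]
for $V \in \es{J}^\mbf{H}$. To verify $(n-1)$-reducedness, I would show that the lower quaternion derivatives $(h^*F)^{(k)}$ vanish for every $k < n$: unwinding Definition~\ref{def: derivative} via the quaternion Stiefel combinatorics of Section~\ref{sec: combinatorics} reduces this to a connectivity estimate, namely that the sphere $S^{nh(V)}$ has real dimension $4n\dim_{\mbf{H}}(V)$, which exceeds the connectivity required to trivialize the fiber defining $(h^*F)^{(k)}$ for $k < n$, paralleling the argument of~\cite[Proposition 5.3]{Weiss} in the quaternion setting. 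For the $n$-polynomial condition, one may argue similarly that $(h^*F)^{(n+1)} \simeq *$ by the same connectivity estimate (using that $n$-polynomiality is equivalent to vanishing of the $(n+1)$-st derivative, since this derivative is the fiber of $F \to \tau_n F$), or alternatively verify directly that the comparison map $h^*F(V) \to \tau_n^\mbf{H}(h^*F)(V)$ is an equivalence by commuting $\Omega^\infty$ and the homotopy orbits past the relevant homotopy limit.

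The principal obstacle I anticipate is the representation-theoretic mismatch between the $U(n)$-representation $\mbf{C}^n \otimes_\mbf{C} h(V)$ appearing in the pulled-back formula and the $\mathsf{Sp}(n)$-representation $\mbf{H}^n \otimes_\mbf{H} V$ more naturally adapted to quaternion calculus: these agree in real dimension but differ as group representations. For instance, $\mbf{H}$ viewed as a $U(1) \subset \mathsf{Sp}(1)$-representation decomposes as $\mbf{C} \oplus \overline{\mbf{C}}$, whereas $h(\mbf{H}) = \mbf{C}^2$ is the direct sum of two copies of the standard $U(1)$-representation. This mismatch means that $h^*F$ will not in general be identifiable with the $n$-homogeneous quaternion functor obtained from $\Theta$ by simply restricting scalars along the inclusion $U(n) \hookrightarrow \mathsf{Sp}(n)$; a precise identification would require a more delicate argument involving induction of spectra along this inclusion, which I do not pursue here. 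Fortunately, only real dimensions\textemdash and not the full representation structures\textemdash enter into the connectivity estimates underlying the $n$-homogeneity verification, so the mismatch poses no obstruction to the present lemma.
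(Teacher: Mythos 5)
Your starting point (the classification of $n$-homogeneous unitary functors) matches the paper's, which simply says ``argue with classifications, analogous to \cite[Lemma 4.1]{TaggartOCUC}''\textemdash i.e.\ identify $h^\ast F$ \emph{as} a classified $n$-homogeneous quaternion functor, with derivative obtained from $\Theta$ by inducing along $U(n)\hookrightarrow \mathsf{Sp}(n)$. You explicitly decline to carry out that identification and instead try to verify the two defining conditions by connectivity estimates. That substitution does not work, and the place it fails is exactly the $n$-polynomial condition. Polynomiality is not a connectivity statement: knowing that $S^{nh(V)}$ is $(4n\dim_{\mbf{H}}V-1)$-connected tells you that $h^\ast F(V)$ and $\tau_n^{\mbf{H}}(h^\ast F)(V)$ are both highly connected, hence that the fiber $(h^\ast F)^{(n+1)}(V)$ is highly connected\textemdash it does not make that fiber contractible, and contractibility of the fiber is in any case only a necessary condition for $n$-polynomiality (the converse requires control of $\pi_0$ and of all components, not just the basepoint fiber). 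Your fallback\textemdash commuting $\Omega^\infty$ and $(-)_{hU(n)}$ past the homotopy limit defining $\tau_n^{\mbf{H}}$\textemdash also fails: homotopy orbits do not commute with the homotopy limit over the topological poset $(\es{J}^{\mbf{H}}_{/\mbf{H}^{n+1}})_0$. The only known proof that functors of the form $\Omega^\infty[(S^{nV}\wedge\Theta)_{hG}]$ are $n$-polynomial is the Stiefel-combinatorial argument of \cite[Theorem 7.3]{Weiss}, and running it for $h^\ast F$ forces you to confront precisely the equivariant comparison of $S^{\mbf{C}^n\otimes_{\mbf{C}}h(V)}$ with $S^{\mbf{H}^n\otimes V}$ that you set aside. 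Your representation-theoretic observation ($\res_{U(n)}\mbf{H}^n\cong\mbf{C}^n\oplus\overline{\mbf{C}^n}\not\cong\mbf{C}^n\otimes_{\mbf{C}}\mbf{C}^2$) is a genuine and worthwhile point\textemdash it shows the quaternion case is subtler than the orthogonal-to-unitary case, where $\res_{O(n)}\mbf{C}^n\cong\R^n\oplus\R^n$ makes the projection formula immediate\textemdash but your conclusion that it ``poses no obstruction'' because only real dimensions enter is exactly backwards: it is harmless for the connectivity half and is the whole difficulty for the polynomiality half.

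There is also a smaller error in the reducedness half. The lower derivatives $(h^\ast F)^{(k)}=\fib(h^\ast F\to\tau_{k-1}h^\ast F)$ of an $n$-homogeneous functor do \emph{not} vanish for $k<n$ (already $E^{(1)}$ of a $2$-homogeneous orthogonal functor is the fiber of $E(V)\to E(V\oplus\R)$, which is highly connected but nontrivial); what vanishes are the layers $D_k$, equivalently the derivatives of the Taylor coefficients $T_kF$. The connectivity estimate you have is the right input, but the correct mechanism is the agreement formalism: $h^\ast F$ agrees with the trivial functor to order $n-1$ in the quaternion sense (connectivity growing like $4n\dim_{\mbf{H}}V$), and $T_{n-1}$ sends order-$(n-1)$ agreements to equivalences, so $T_{n-1}(h^\ast F)\simeq T_{n-1}(\ast)\simeq\ast$. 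To repair the proof as a whole, follow the paper's intended route: identify $\mbf{C}^n\otimes_{\mbf{C}}h(V)$ with the appropriate (conjugate-twisted) restriction of $\mbf{H}^n\otimes V$ along $U(n)\hookrightarrow\mathsf{Sp}(n)$, apply the projection formula $(\res Y\wedge\Theta)_{hU(n)}\simeq(Y\wedge\mathsf{Sp}(n)_+\wedge_{U(n)}\Theta)_{h\mathsf{Sp}(n)}$, and read off $n$-homogeneity from Theorem~\ref{thm: classification of homog}.
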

\begin{proof}
    Argue with classifications, analogous to~\cite[Lemma 4.1]{TaggartOCUC}.
\end{proof}

We now show that the functor $h^\ast$ preserves polynomial functors. In~\cite{TaggartOCUC} a reduced assumption was necessary, here we provide an extra argument which removes this assumption. Our extra argument also applied in the context of~\cite{TaggartOCUC} implying that results there hold for all functors, not just the reduced ones.

\begin{lem}\label{lem: h preserves reduced polynomials}
    The functor $h^\ast$ preserves $n$-polynomial functors, i.e., the functor $h^\ast$ induces a functor
    \[
    h^\ast : \poly{n}(\es{J}^\mbf{U}, \T_\ast) \longrightarrow \poly{n}(\es{J}^\mbf{H}, \T_\ast),
    \]
    between the $\infty$-categories of $n$-polynomial functors in unitary calculus and $n$-polynomial functors in quaternion calculus.
\end{lem}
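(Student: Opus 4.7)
The plan is to argue along the Weiss tower of $F$, using that $h^\ast$, being precomposition, preserves all homotopy limits pointwise (hence fiber sequences), together with the previous lemma that $h^\ast$ preserves $n$-homogeneous functors. The key closure property we will exploit is that the subcategory $\poly{n}(\es{J}^\mbf{H}, \T_\ast)$ is closed under fiber sequences: since the polynomial approximation $T_n$ is a left exact left adjoint to the inclusion of $n$-polynomial functors, whenever $A \to B \to C$ is a fiber sequence of pointed functors with $A$ and $C$ both $n$-polynomial, the functor $B$ is automatically $n$-polynomial. This is routine from a comparison of the fiber sequence with its image under $T_n$, in which the outer vertical maps are equivalences by hypothesis and $T_n$ preserves fiber sequences by left exactness.

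Given an $n$-polynomial unitary functor $F$, we have $F \simeq T_n^\mbf{U}F$, and the Weiss tower
\[
T_n^\mbf{U}F \to T_{n-1}^\mbf{U}F \to \cdots \to T_0^\mbf{U}F
\]
has homotopy fibers the $k$-homogeneous functors $D_k^\mbf{U}F$. Applying $h^\ast$ yields fiber sequences
\[
h^\ast D_k^\mbf{U}F \to h^\ast T_k^\mbf{U}F \to h^\ast T_{k-1}^\mbf{U}F,
\]
in which each $h^\ast D_k^\mbf{U}F$ is $k$-homogeneous, and hence $n$-polynomial, by the preceding lemma. An induction on $k$ from $0$ to $n$ then shows that each $h^\ast T_k^\mbf{U}F$ is $n$-polynomial, and in particular $h^\ast F \simeq h^\ast T_n^\mbf{U}F$ is $n$-polynomial, giving the lemma.

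The base case $k=0$ is where the extra content beyond \cite{TaggartOCUC} appears. Let $G = T_0^\mbf{U}F$, which is $0$-polynomial, so that $G(W) \simeq G(W \oplus \mbf{C})$ for all $W$, and iterating once gives $G(W) \simeq G(W \oplus \mbf{C}^2)$. Since the slice poset $\es{P}_0(\mbf{H}^1)$ has $\mbf{H}$ as its unique object, $0$-polynomiality of $h^\ast G$ as a quaternion functor reduces to the single equivalence
\[
h^\ast G(V) = G(hV) \simeq G(hV \oplus \mbf{C}^2) = G(h(V \oplus \mbf{H})) = h^\ast G(V \oplus \mbf{H}),
\]
which is precisely what the iterated $0$-polynomial identity for $G$ provides.

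The main obstacle is this base case: in \cite{TaggartOCUC} the hypothesis of reducedness was used to guarantee $T_0^\mbf{U}F \simeq \ast$ so that the induction could be begun at $k=1$ trivially. The observation that $h^\ast$ preserves $0$-polynomial functors directly removes the need for any reducedness assumption; once the base case is in place, the Weiss-tower induction proceeds uniformly and yields the full statement.
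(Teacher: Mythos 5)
Your argument is correct, but it takes a genuinely different route from the paper's. The paper first treats the case of a \emph{reduced} $n$-polynomial functor by the induction of \cite[Theorem 4.3]{TaggartOCUC}, which runs down the tower using the delooping fiber sequence $T_nF \to T_{n-1}F \to R_nF$ of \cite[Corollary 8.3]{Weiss} — there $h^\ast T_nF$ appears as the \emph{fiber} of a map of $n$-polynomial functors, so only closure of $\poly{n}$ under homotopy fibers (a genuine homotopy limit) is needed — and then removes reducedness by splitting off $T_0F$ via the fiber sequence $\mathsf{Red}(F) \to F \to T_0F$ and a long-exact-sequence argument. You instead run up the tower with the layer sequences $D_k^\mbf{U}F \to T_k^\mbf{U}F \to T_{k-1}^\mbf{U}F$, invoking the preceding lemma that $h^\ast$ preserves homogeneous functors and the closure of $\poly{n}(\es{J}^\mbf{H},\T_\ast)$ under \emph{extensions}. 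Your route buys you a uniform induction with no reducedness bookkeeping and no appeal to the classifying functors $R_nF$; the cost is that the extension-closure property is used at every stage rather than once, and this property (an equivalence on the fiber over the basepoint and on the base forces one on total spaces) requires the same care with $\pi_0$ and non-basepoint components that the paper's own ``argument with the long exact sequence of homotopy groups'' does — so you should at least record how it follows from left exactness of $T_n$ and the comparison of fiber sequences. Your base case — that $h^\ast$ of a $0$-polynomial functor is $0$-polynomial because $h(V\oplus\mbf{H}) = hV\oplus\mbf{C}^2$ and the $0$-polynomial condition can be iterated — is exactly the observation the paper uses to handle $h^\ast T_0^\mbf{U}F$, and is the genuinely new content beyond \cite{TaggartOCUC} in both treatments.
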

\begin{proof}
First assume that the $n$-polynomial functor $F$ is reduced, i.e., that $T_0F\simeq \ast$. Induction, analogous to~\cite[Theorem 4.3]{TaggartOCUC} yields that the canonical map
\[
h^\ast(T_n^\mbf{U}F) \longrightarrow T_n^\mbf{H}(h^\ast T_n^\mbf{U}F),
\]
is an equivalence whenever $F$ is reduced. Here the reduced assumption is essential for this argument as it uses the existence of the fiber sequence 
    \[
    T_nF \longrightarrow T_{n-1}F \longrightarrow R_nF, 
    \]
see for example,~\cite[Corollary 8.3, unravelled]{Weiss}. To remove the reduced assumption define the \emph{reduced part} $\mathsf{Red}(F)$ of a functor $F$ by the fiber sequence 
\[
\mathsf{Red}(F) \longrightarrow F \longrightarrow T_0F.
\]
Since $T_n$ preserves fiber sequences for all $n \geq 0$, applying $T_0$ to this fiber sequence shows $\mathsf{Red}(F)$ is a reduced functor. Since $h^\ast$ preserves fiber sequences an argument with the long exact sequence of homotopy groups implies that it suffices to show that the canonical map 
\[
h^\ast T_0^\mbf{U}F \longrightarrow T_n^\mbf{H}(h^\ast T_0^\mbf{U}F),
\]
is an equivalence. This follows since the functor $h^\ast T_0^\mbf{U}F$ is (homotopically) constant and hence $0$-polynomial, and in particular, by Lemma~\ref{prop: symplectic n-poly implies n+1-poly} also $n$-polynomial for every $n \geq 0$.
\end{proof}

The induction of Lemma~\ref{lem: h preserves reduced polynomials} yields the following corollary.

\begin{cor}
    Let $F$ be a functor in unitary calculus. The map
    \[
    h^\ast(T_n^\mbf{U}F) \longrightarrow T_n^\mbf{S}(h^\ast T_n^\mbf{U}F),
    \]
    is an equivalence. 
\end{cor}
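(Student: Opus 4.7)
The plan is to observe that this corollary is essentially a direct consequence of the preceding lemma combined with the universal property of the polynomial approximation functor $T_n^\mbf{H}$, and so the proof reduces to checking that the natural map in question is the unit of an adjunction applied to an object in the image of the right adjoint.

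First, I would note that $T_n^\mbf{U} F$ is, by construction, an $n$-polynomial functor in unitary calculus. Applying Lemma~\ref{lem: h preserves reduced polynomials}, we deduce that $h^\ast(T_n^\mbf{U} F)$ is an $n$-polynomial functor in quaternion calculus, i.e., it lies in the full sub-$\infty$-category $\poly{n}(\es{J}^\mbf{H}, \T_\ast)$. At this point I would invoke the fact, established earlier, that $T_n^\mbf{H}$ is the left adjoint to the inclusion $\poly{n}(\es{J}^\mbf{H}, \T_\ast) \hookrightarrow \Fun(\es{J}^\mbf{H}, \T_\ast)$: for any $n$-polynomial functor $G$, the unit map $G \to T_n^\mbf{H} G$ is an equivalence. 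Applying this with $G = h^\ast(T_n^\mbf{U} F)$ gives the desired equivalence.

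The hard part was actually already carried out in the proof of Lemma~\ref{lem: h preserves reduced polynomials}, where one had to handle both the reduced case (via the inductive argument following~\cite[Theorem 4.3]{TaggartOCUC}) and the non-reduced case (using the \emph{reduced part} construction and the fact that constant functors are $n$-polynomial for every $n$). Once one has that $h^\ast$ carries $n$-polynomial functors to $n$-polynomial functors, the corollary is simply an instance of the universal property of $T_n^\mbf{H}$ applied to an object already in the essential image of the inclusion. No further analysis of towers, Stiefel combinatorics, or homogeneous layers is needed at this stage.
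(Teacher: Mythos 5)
Your proof is correct and is essentially the paper's argument: the paper derives the corollary directly from the induction carried out inside Lemma~\ref{lem: h preserves reduced polynomials}, while you repackage the same content as ``$h^\ast(T_n^{\mbf{U}}F)$ is $n$-polynomial by that lemma, hence its unit map into $T_n^{\mbf{H}}$ of itself is an equivalence by the reflective-localization property of $T_n^{\mbf{H}}$'' --- a formally equivalent and arguably cleaner deduction. The only cosmetic point is that the $T_n^{\mbf{S}}$ in the statement should read $T_n^{\mbf{H}}$, as you implicitly and correctly assume.
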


\subsection{Weiss towers} We conclude this section by demonstrating the action of the functor $h^\ast$ on Weiss towers. For this we must assume our functor $F$ has convergent Weiss tower. We will use the formalism of ``weakly polynomial'' functors from~\cite[\S9]{TaggartUnitary}, which is built on the notion of ``agreement''.

\begin{definition}[{\cite[Definition 9.4]{TaggartUnitary}}]
Let $n$ be a non-negative integer. A natural transformation $p: F \longrightarrow G$ of unitary functors is \emph{an order $n$ unitary agreement} if there is some $\rho \in \mathbf{N}$ and $b \in \mbf{Z}$ such that $p_U: F(U) \longrightarrow G(U)$ is $(2(n+1)\dim(U)-b)$-connected for all $U \in \es{J}$, satisfying $\dim(U) \geq \rho$. We will say that \textit{$F$ agrees with $G$ to order $n$} if there is an order $n$ unitary agreement $p: F \longrightarrow G$ between them.
\end{definition}

Not that the above definition is phrased in terms of the \emph{complex} dimension of $U$, hence the additional factor of two. The connectivity bound could just as well be written as $(n+1)\dim_\mbf{R}(U)-b$.

\begin{definition}[{\cite[Definition 9.11]{TaggartUnitary}}]
    Let $n$ be a non-negative integer. A unitary functor $F$ is \emph{weakly $(\rho,n)$-polynomial} if the map $\eta : F(U) \longrightarrow T_nF(U)$ is an agreement of order $n$ whenever $\dim(U) \geq \rho$. A functor is \textit{weakly polynomial} if it is weakly $(\rho,n)$-polynomial for all $n\geq 0$. 
\end{definition}

\begin{thm}\label{thm: h on towers}
If $F$ be a weakly polynomial functor, then there is a levelwise equivalence
    \[
    h^\ast (T_n^\mbf{U} F) \simeq T_n^\mbf{H}(h^\ast F). 
    \]
\end{thm}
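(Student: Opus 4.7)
The plan is to exploit the universal property of $T_n^\mbf{H}$ together with the compatibility of the notion of agreement under the functor $h$. The unit map $F \to T_n^\mbf{U}F$ applies under $h^\ast$ to a natural transformation $h^\ast F \to h^\ast T_n^\mbf{U} F$, and by Lemma~\ref{lem: h preserves reduced polynomials} the target is $n$-polynomial in the quaternion sense. Hence by the universal property of $T_n^\mbf{H}$ this map factors uniquely (up to contractible choice) through a natural comparison map
\[
\Phi \colon T_n^\mbf{H}(h^\ast F) \longrightarrow h^\ast T_n^\mbf{U} F,
\]
and the theorem amounts to showing $\Phi$ is a levelwise equivalence.

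The first step is a dimension book-keeping argument. If $F$ is weakly polynomial, then $\eta \colon F(U)\to T_n^\mbf{U}F(U)$ is an order $n$ unitary agreement, i.e.\ $(2(n+1)\dim_\mbf{C}(U) - b)$-connected whenever $\dim_\mbf{C}(U) \geq \rho$. Substituting $U = h(V) = \mbf{C}^{2\dim_\mbf{H}(V)}$, the connectivity of $h^\ast\eta$ at $V$ becomes
\[
2(n+1)\cdot 2\dim_\mbf{H}(V) - b \;=\; (n+1)\dim_\mbf{R}(V) - b,
\]
which is precisely the connectivity bound required for an order $n$ agreement in the quaternion calculus (with threshold $\lceil \rho/2\rceil$). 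So $h^\ast F \to h^\ast T_n^\mbf{U} F$ is an order $n$ quaternion agreement.

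The second step is the quaternion analogue of the standard Weiss fact that $T_n$ sends order-$n$ agreements to equivalences. Concretely, one shows that if $p\colon E \to G$ is $c(V)$-connected with $c(V) \geq (n+1)\dim_\mbf{R}(V) - b$ in the appropriate range, then $\tau_n^\mbf{H} p$ is more connected by an amount controlled by the connectivity of the sphere bundle $S\gamma_{n+1}^\mbf{H}(V,W)$ from Section~\ref{sec: combinatorics}; iterating and passing to the colimit defining $T_n^\mbf{H}$ upgrades this to an equivalence. This is the quaternion version of the arguments in~\cite{Weiss,WeissErratum} and~\cite[\S9]{TaggartUnitary}, and goes through using the quaternion Stiefel combinatorics and the fiber-sequence description of derived section spaces as in our remark following Proposition~\ref{prop: Tn universal}. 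Applying this to $h^\ast F \to h^\ast T_n^\mbf{U} F$ shows that $T_n^\mbf{H}(h^\ast F) \to T_n^\mbf{H}(h^\ast T_n^\mbf{U} F)$ is an equivalence. Since $h^\ast T_n^\mbf{U} F$ is already $n$-polynomial, the preceding corollary identifies $T_n^\mbf{H}(h^\ast T_n^\mbf{U} F) \simeq h^\ast T_n^\mbf{U} F$, and composing gives $\Phi$ as an equivalence.

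The main obstacle is the second step: transporting the Weiss connectivity estimate to the quaternion setting. The inputs (the Stiefel cofibre sequences and the sphere bundle model for $S\gamma_{n+1}^\mbf{H}$) are in place from Section~\ref{sec: combinatorics}, but one has to carefully check that the connectivity loss incurred by the internal-category homotopy limit defining $\tau_n^\mbf{H}$ is what the real-dimension count $(n+1)\dim_\mbf{R}(V)$ pays for; the factor of $4$ coming from $\dim_\mbf{R}(\mbf{H}) = 4$ is what makes the bookkeeping work out, exactly mirroring the factor of $2$ in the unitary case.
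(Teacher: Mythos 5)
Your argument is correct and is essentially the approach the paper takes: the paper's proof simply defers to \cite[Theorem 5.5]{TaggartOCUC}, and your sketch\textemdash restrict the order-$n$ agreement $F \to T_n^\mbf{U}F$ along $h$, check that the connectivity bound $2(n+1)\dim_\mbf{C}(U)-b$ becomes $(n+1)\dim_\mbf{R}(V)-b$ under $h(V)=\mbf{C}^{2\dim_\mbf{H}(V)}$, and then apply the quaternion version of the fact that $T_n$ converts order-$n$ agreements into equivalences together with Lemma~\ref{lem: h preserves reduced polynomials}\textemdash is precisely the quaternion transcription of that proof. No gaps.
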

\begin{proof}
The proof is all but identical to \cite[Theorem 5.5]{TaggartOCUC}.
\end{proof}

The functor $h^\ast$ commutes with the inclusion $\es{J}_{/\mbf{H}^{n-1}} \hookrightarrow \es{J}_{/\mbf{H}^{n}}$ hence Theorem~\ref{thm: h on towers} has the following corollary.

\begin{cor}
    If $F$ is a weakly polynomial unitary functor, then there levelwise equivalence
    \[
    h^\ast \mathsf{Tow}^\mbf{U}(F) \simeq \mathsf{Tow}^\mbf{H}(h^\ast F). 
    \]
\end{cor}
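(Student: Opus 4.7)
The plan is to upgrade the objectwise equivalences $h^\ast(T_n^\mbf{U} F) \simeq T_n^\mbf{H}(h^\ast F)$ provided by Theorem~\ref{thm: h on towers} to an equivalence of diagrams $\mbf{Z}_{\geq 0}^\op \to \Fun(\es{J}^\mbf{H}, \T_\ast)$. Recall that the tower $\mathsf{Tow}(F)$ sends $n$ to $T_n F$ and sends the morphism $n \to n-1$ to the natural transformation arising from the inclusion of slice categories $(\es{J}_{/\mbf{H}^{n}})_0 \hookrightarrow (\es{J}_{/\mbf{H}^{n+1}})_0$ (and analogously on the unitary side). Since Theorem~\ref{thm: h on towers} already supplies equivalences level by level, the only remaining task is naturality in the tower variable.

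First I would record the key compatibility, which is precisely the content of the sentence preceding the statement of the corollary: the functor $h\colon \es{J}^\mbf{H} \to \es{J}^\mbf{U}$ sends $\mbf{H}^{n-1} \hookrightarrow \mbf{H}^n$ to $\mbf{C}^{2(n-1)} \hookrightarrow \mbf{C}^{2n}$, and in particular induces a strictly commuting square of slice categories
\[
\begin{tikzcd}
(\es{J}^\mbf{H}_{/\mbf{H}^{n-1}})_0 \ar[r, hook] \ar[d] & (\es{J}^\mbf{H}_{/\mbf{H}^n})_0 \ar[d] \\
(\es{J}^\mbf{U}_{/\mbf{C}^{2(n-1)}})_0 \ar[r, hook] & (\es{J}^\mbf{U}_{/\mbf{C}^{2n}})_0.
\end{tikzcd}
\]
Because the functors $\tau_n$, and thus their iterates $T_n$, are defined as homotopy limits over these slice categories, and because $h^\ast$ (being a pullback along an adjunction of indexing categories) commutes with such homotopy limits, the equivalences of Theorem~\ref{thm: h on towers} fit into commuting squares
\[
\begin{tikzcd}
h^\ast(T_n^\mbf{U} F) \ar[r, "\simeq"] \ar[d] & T_n^\mbf{H}(h^\ast F) \ar[d] \\
h^\ast(T_{n-1}^\mbf{U} F) \ar[r, "\simeq"] & T_{n-1}^\mbf{H}(h^\ast F)
\end{tikzcd}
\]
for each $n \geq 1$, the vertical maps being the connecting morphisms in the respective Weiss towers.

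Assembling these squares then yields the desired equivalence $h^\ast \mathsf{Tow}^\mbf{U}(F) \simeq \mathsf{Tow}^\mbf{H}(h^\ast F)$ as functors from $\mbf{Z}_{\geq 0}^\op$ to $\Fun(\es{J}^\mbf{H}, \T_\ast)$. The only subtlety lies in checking that this assembly is $\infty$-categorically coherent rather than just pointwise, but since everything is produced by pulling back along the single functor $h$ and by naturality of the universal properties of $T_n^\mbf{U}$ and $T_n^\mbf{H}$ (or equivalently by the mapping-space description of polynomial approximation in terms of sections of left fibrations over the nerves of the slice categories), the coherences follow automatically. The main obstacle, namely the convergence input that forces $h^\ast T_n^\mbf{U} F \simeq T_n^\mbf{H} h^\ast F$ to hold at all, is already absorbed into the hypothesis that $F$ is weakly polynomial and applied in Theorem~\ref{thm: h on towers}; the corollary is then a bookkeeping consequence of that theorem together with the compatibility of $h^\ast$ with the tower structure.
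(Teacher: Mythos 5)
Your proposal is correct and matches the paper's own (one-sentence) justification: the levelwise equivalences come from Theorem~\ref{thm: h on towers}, and the compatibility with the tower's structure maps follows because $h$ carries the inclusion $\mbf{H}^{n-1}\hookrightarrow\mbf{H}^n$ to $\mbf{C}^{2(n-1)}\hookrightarrow\mbf{C}^{2n}$, so $h^\ast$ respects the maps $T_n\to T_{n-1}$. You correctly flag that the real content (needing the weakly polynomial hypothesis) lives in the theorem, with the corollary being bookkeeping.
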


\section{Classification of quaternion vector bundles}
As an application of the theory of quaternion calculus we classify certain quaternion vector bundles over finite-dimensional cell complexes in a range in terms of stable homotopy classes of maps. This is the quaternion version of~\cite{Hu}, which utilises unitary calculus to identity stably trivial vector bundles over a $d$-dimensional complex $X$ with $\{X, \Sigma \mbf{C} P^\infty_r\}$ in the metastable range, i.e., when a bundle has rank $r$ with $\frac{d}{4} \leq r \leq \frac{d-1}{2}$. Here $\{X, \Sigma \mbf{C} P^\infty_r\}$ denotes the set of stable maps between $X$ and the suspension of stunted complex projective space $\mbf{C}P^r=\mbf{C}P^\infty/\mbf{C}P^{r-1}$. Using a combination of the Adams and Atiyah-Hirzebruch spectral sequences Hu computes the stable spaces of maps in a range for $X = \mbf{C} P^\ell$, allowing for enumerations of stably trivial vector bundles. In this section we complete the quaternion version of Hu's program on classifying vector bundles by identifying stably trivial quaternion vector bundles over $X$ with $\{X, \Sigma^3 \mbf{H}P^\infty_r\}$ in a range, but we stop short of making any enumerations.

\subsection{The Weiss tower of $\mathsf{BSp}(-)$} Denote by $\mathsf{BSp}(-)$ the functor which sends a quaternion vector space $V$ to $\mathsf{BSp}(V)$, the classifying space of the compact symplectic group of $V$, i.e., the classifying space of the group of automorphisms of $V$. Using Definition~\ref{def: derivative}, we can readily compute the first derivative of $\mathsf{BSp}(-)$ in compact symplectic calculus, and hence the first layer of the associated Weiss tower.

\begin{lem}
The first layer of the Weiss tower of $\mathsf{BSp}(-)$ evaluated at $\mbf{H}^r$ is equivalent to $\Omega^\infty\Sigma^\infty\Sigma^3 \mbf{H}P^\infty_r$. 
\end{lem}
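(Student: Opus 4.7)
The plan is to apply the classification of homogeneous functors (Theorem~\ref{thm: classification of homog}) to $F = \mathsf{BSp}(-)$, reducing the computation of $D_1F(\mbf{H}^r)$ to the identification of $\partial_1 F$ as a spectrum with $\mathsf{Sp}(1)$-action together with an explicit computation of the relevant Borel construction.

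First I would compute the first derivative. Since the only nonzero subspace of $\mbf{H}$ is $\mbf{H}$ itself, Definition~\ref{def: derivative} gives
\[
F^{(1)}(V) = \fib(\mathsf{BSp}(V) \to \mathsf{BSp}(V \oplus \mbf{H})),
\]
and the principal $\mathsf{Sp}(V)$-fibration identifies this fiber with $\mathsf{Sp}(V \oplus \mbf{H})/\mathsf{Sp}(V) \simeq S(V \oplus \mbf{H})$. For $V = \mbf{H}^r$ this is $S^{4r+3}$, and the structure maps $S^{\mbf{H}} \wedge S^{4r+3} \to S^{4r+7}$ are equivalences in the stable range. Packaging this data as a spectrum of multiplicity $4$ and passing to ordinary spectra via extension of scalars (Remark~\ref{rem: spectra with multiplicity}), I identify $\partial_1 F \simeq \Sigma^3 \bb{S}$ with (stably) trivial $\mathsf{Sp}(1)$-action. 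Plugging into the classification:
\[
D_1F(\mbf{H}^r) \simeq \Omega^\infty \bigl[(S^{\mbf{H}^r} \wedge \Sigma^3 \bb{S})_{h\mathsf{Sp}(1)}\bigr] \simeq \Omega^\infty \Sigma^3 (S^{\mbf{H}^r})_{h\mathsf{Sp}(1)}.
\]

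The residual $\mathsf{Sp}(1)$-action on $\mbf{H}^r = \mbf{H} \otimes_\mbf{H} \mbf{H}^r$ is scalar multiplication in the $\mbf{H}$-tensor factor, so the Borel construction $E\mathsf{Sp}(1) \times_{\mathsf{Sp}(1)} \mbf{H}^r$ is the $r$-fold Whitney sum of the tautological quaternionic line bundle over $B\mathsf{Sp}(1) = \mbf{H}P^\infty$, with Thom space $\mbf{H}P^\infty/\mbf{H}P^{r-1} = \mbf{H}P^\infty_r$. Hence $(S^{\mbf{H}^r})_{h\mathsf{Sp}(1)} \simeq \Sigma^\infty \mbf{H}P^\infty_r$, and
\[
D_1F(\mbf{H}^r) \simeq \Omega^\infty \Sigma^\infty \Sigma^3 \mbf{H}P^\infty_r
\]
as required.

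I expect the main obstacle to be justifying the triviality of the stable $\mathsf{Sp}(1)$-action on $\partial_1 F$: at each finite stage the action on $F^{(1)}(\mbf{H}^r) = S^{4r+3}$ is by left-multiplication of $\mathsf{Sp}(\mbf{H}) \subset \mathsf{Sp}(\mbf{H}^{r+1})$ on the last quaternionic coordinate, which is nontrivial and does not even preserve basepoints. One natural approach is to use the description $F^{(1)}(V) = \nat(\es{J}^\mbf{H}_1(V, -), F)$ and track the action through explicit models; another is to imitate the analogous verification in unitary or orthogonal calculus, where inner-automorphism considerations on $\mathsf{BSp}(V \oplus \mbf{H})$ trivialise the action on $\partial_1 F$ stably.
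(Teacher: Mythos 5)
Your proof takes essentially the same route as the paper's: both apply Theorem~\ref{thm: classification of homog}, identify $\partial_1\mathsf{BSp}(-)\simeq\Sigma^3\bb{S}$ with trivial $\mathsf{Sp}(1)$-action via the fibration $S^{4r+3}\to\mathsf{BSp}(r)\to\mathsf{BSp}(r+1)$, and recognise $(S^{\mbf{H}^r})_{h\mathsf{Sp}(1)}$ as the stunted quaternion projective space $\mbf{H}P^\infty_r$. The triviality of the $\mathsf{Sp}(1)$-action on the derivative spectrum, which you rightly flag as the delicate point, is simply asserted in the paper's proof as well.
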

\begin{proof}
By the classification of homogeneous functors in Theorem~\ref{thm: classification of homog}, the first layer of the Weiss tower of $\mathsf{BSp}(-)$ evaluated at $\mbf{H}^r$ is equivalent to
\[
\Omega^\infty (S^{4r} \wedge \partial_1 (\mathsf{BSp}(-)))_{h\mathsf{Sp}(1)}.
\]
The fiber sequence 
\[
\Sigma^3S^{\mbf{H}^r}=S^{4r+3} \longrightarrow \mathsf{BSp}(r) \longrightarrow \mathsf{BSp}(r+1),
\]
identifies $\partial_1(\mathsf{BSp}(-))$ with $\Sigma^3\bb{S}$ with trivial $\mathsf{Sp}(1)$-action, and the proof follows from observing
\[
\Omega^\infty (S^{4r} \wedge \partial_1 (\mathsf{BSp}(-)))_{h\mathsf{Sp}(1)} \simeq \Omega^\infty\Sigma^{\infty+3} (S^{4r})_{h\mathsf{Sp}(1)} \simeq \Omega^\infty\Sigma^\infty\Sigma^3 \mbf{H}P^\infty_r), 
\]
where $\mbf{H}P^\infty_r$ is the stunted quaternion projective space defined as the cofiber of the canonical inclusion $\mbf{H}P^{r-1} \hookrightarrow \mbf{H}P^\infty$.
\end{proof}

We now show that the Weiss tower of $\mathsf{BSp}(-)$ converges, using the quaternion version of weakly polynomial functors introduced by the second author~\cite[\S9]{TaggartUnitary}.

\begin{lem}
The Weiss tower of $\mathsf{BSp}(-)$ converges at $\mbf{H}^r$ to $\mathsf{BSp}(r)$ for $r \geq 1$.
\end{lem}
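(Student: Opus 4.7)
The plan is to establish that $\mathsf{BSp}(-)$ is weakly polynomial in the quaternion sense and then invoke the standard convergence machinery. Concretely, I would first quantify the connectivity of the structure maps of $\mathsf{BSp}(-)$: the fiber sequence
\[
S^{4r+3} \longrightarrow \mathsf{BSp}(r) \longrightarrow \mathsf{BSp}(r+1)
\]
shows that the map $\mathsf{BSp}(V) \to \mathsf{BSp}(V \oplus \mathbf{H})$ is $(4\dim_{\mathbf{H}}(V)+3)$-connected. Iterating and using that each further stabilization only increases connectivity, I obtain that the map from $\mathsf{BSp}(V)$ into any $\mathsf{BSp}(V \oplus U)$ with $0 \neq U$ is at least $(4\dim_{\mathbf{H}}(V)+3)$-connected.

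Next, I would translate these connectivity estimates into the weakly polynomial framework by defining, analogously to the unitary case recalled in the paragraphs preceding Theorem~\ref{thm: h on towers}, an order $n$ quaternion agreement to be a transformation $p_V: F(V) \to G(V)$ which is $(4(n+1)\dim_{\mathbf{H}}(V)-b)$-connected for $\dim_{\mathbf{H}}(V)\geq \rho$ and some fixed $\rho, b$. Unwinding the definition of $\tau_n$ as a homotopy limit over $(\es{J}^\mathbf{H}_{/\mathbf{H}^{n+1}})_0$ and using that homotopy limits preserve connectivity bounds up to the internal dimension of the indexing category, the connectivity estimate for the stabilization maps propagates to show that the canonical map $\mathsf{BSp}(V) \to \tau_n \mathsf{BSp}(V)$, and hence (passing to the filtered colimit defining $T_n$) the map $\mathsf{BSp}(V) \to T_n \mathsf{BSp}(V)$, is an order $n$ agreement. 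Thus $\mathsf{BSp}(-)$ is weakly polynomial.

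With weak polynomiality in hand, convergence is essentially automatic: the connectivity of $\mathsf{BSp}(V) \to T_n\mathsf{BSp}(V)$ tends to infinity with $n$ whenever $\dim_\mathbf{H}(V)$ is fixed, so taking the homotopy limit of the Weiss tower at $V=\mathbf{H}^r$ recovers $\mathsf{BSp}(r)$ by a standard Milnor--type $\lim^1$ argument on homotopy groups (equivalently, the cofinal system of approximations becomes arbitrarily highly connected). The connectivity bound $4(n+1)r - b$ grows without bound as $n \to \infty$ for each fixed $r \geq 1$, giving the claimed convergence at $\mathbf{H}^r$.

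The main obstacle I anticipate is the homotopy limit estimate in the second step: one must control the connectivity of $\tau_n F$ in terms of the connectivity of $F$ on objects of strictly larger quaternion dimension, which requires being careful about the internal (topological) nature of the indexing category $(\es{J}^\mathbf{H}_{/\mathbf{H}^{n+1}})_0$. The cleanest route is to model $\tau_n F(V)$ as the derived section space of a left fibration of complete Segal spaces, as in the paper's discussion after Definition~\ref{def: polynomial}, and then invoke the connectivity estimate for section spaces (the quaternion analogue of~\cite[Lemma e.3]{WeissErratum}) to show that the order $n$ agreement property is preserved by $\tau_n$ with improved connectivity constants\textemdash exactly as in the unitary argument of~\cite[\S9]{TaggartUnitary}.
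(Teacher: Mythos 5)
Your overall strategy (set up a quaternion notion of agreement, show $\mathsf{BSp}(-)$ is weakly polynomial, then conclude convergence by a standard $\lim^1$ argument) is the right one, and your first and last steps are fine; but the middle step has a genuine gap. Knowing that each stabilization map $\mathsf{BSp}(V)\to\mathsf{BSp}(V\oplus U)$ is $(4\dim_{\mathbf{H}}(V)+3)$-connected does not let you conclude that the unit map $\mathsf{BSp}(V)\to\tau_n\mathsf{BSp}(V)$ is $(4(n+1)\dim_{\mathbf{H}}(V)-b)$-connected. The connectivity of the unit into the homotopy limit over $(\es{J}^{\mathbf{H}}_{/\mathbf{H}^{n+1}})_0$ is governed by how \emph{cartesian} the cubes $U\mapsto\mathsf{BSp}(V\oplus U)$ are (a stable $n$-excisiveness estimate of Blakers\textendash Massey type), not by the connectivity of their individual edges: already for $n=1$ one needs the squares built from two lines in $\mathbf{H}^{2}$ to be roughly $(8\dim_{\mathbf{H}}V)$-cartesian, which is strictly more than the $(4\dim_{\mathbf{H}}V+3)$-connectivity of the edges, and the projection $\tau_n F(V)\to F(V\oplus\mathbf{H}^{n+1})$ gives you no lower bound on the unit by itself. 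The tool you propose, the quaternion analogue of~\cite[Lemma e.3]{WeissErratum}, is also not the right one for this: it improves the connectivity of a map $E\to F$ \emph{after} applying $\tau_n$ and is what one uses to show, e.g., that an order $n$ agreement induces an equivalence on $T_n$; it cannot manufacture the initial agreement $F\to\tau_nF$ for a specific $F$.

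The missing idea\textemdash and the actual content of the two citations in the paper's proof\textemdash is the reduction to the sphere functor. Using the fibration $\Sigma^{3}S^{\mathbf{H}^{r}}\to\mathsf{BSp}(r)\to\mathsf{BSp}(r+1)$, weak polynomiality of $\mathsf{BSp}(-)$ is reduced to weak polynomiality of $V\mapsto\Sigma^{3}S^{V}$ (the quaternion analogue of~\cite[Corollary 9.15]{TaggartUnitary}), and it is for the sphere functor that the genuine analyticity input is available: its weak polynomiality comes from Goodwillie's connectivity estimates for the identity functor (the $E_n$/$O_n$ conditions of~\cite{GoodCalcII, GoodCalcIII}, i.e.\ higher Blakers\textendash Massey), as in the analogue of~\cite[Example 9.13]{TaggartUnitary}. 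Without this reduction and the accompanying excisiveness estimates, your passage from the fibration sequence to the order-$n$ agreement does not go through.
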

\begin{proof}
    By the quaternion version of~\cite[Corollary 9.15]{TaggartUnitary}, it suffices to show that the functor 
    \[
    V \longmapsto \Sigma^3 S^V,
    \]
    is weakly polynomial in the sense of~\cite[Definition 9.11]{TaggartUnitary}. This follows from the quaternion version of~\cite[Example 9.13]{TaggartUnitary}.
\end{proof}

\begin{cor}
    The $n$-th layer of the Weiss tower of $\mathsf{BSp}(-)$ evaluated at $\mbf{H}^r$ is at least $(4nr-1)$-connected, i.e.,
    \[
    \mathsf{Conn}(D_n\mathsf{BSp}(r)) \geq 4nr-1.
    \]
\end{cor}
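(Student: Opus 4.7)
The plan is to apply the classification of $n$-homogeneous functors (the corollary following Theorem~\ref{thm: classification of homog}), which reduces the estimate to a question about $\partial_n\mathsf{BSp}(-)$. Explicitly,
\[
D_n\mathsf{BSp}(\mbf{H}^r)\simeq \Omega^\infty\bigl[(S^{n\mbf{H}^r}\wedge \partial_n\mathsf{BSp}(-))_{h\mathsf{Sp}(n)}\bigr].
\]
First I would observe that $n\mbf{H}^r = \mbf{H}^n\otimes\mbf{H}^r \cong \mbf{H}^{nr}$ has real dimension $4nr$, so the one-point compactification $S^{n\mbf{H}^r}$ is the sphere $S^{4nr}$, hence $(4nr-1)$-connected. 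Granting that $\partial_n\mathsf{BSp}(-)$ is connective as an $\mathsf{Sp}(n)$-spectrum, the smash product $S^{n\mbf{H}^r}\wedge \partial_n\mathsf{BSp}(-)$ would be $(4nr-1)$-connected; homotopy orbits preserve this lower bound (being a homotopy colimit), and $\Omega^\infty$ sends a $(4nr-1)$-connected connective spectrum to a $(4nr-1)$-connected space. Chaining these standard facts together yields the desired inequality.

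The substance of the proof therefore lies in showing that $\partial_n\mathsf{BSp}(-)$ is connective. I would unwind Definition~\ref{def: derivative} and write $\mathsf{BSp}^{(n)}(V)=\fib(\mathsf{BSp}(V)\to \tau_{n-1}\mathsf{BSp}(V))$, using that each structure map $\mathsf{BSp}(V)\to \mathsf{BSp}(V\oplus U)$ entering the homotopy limit defining $\tau_{n-1}\mathsf{BSp}(V)$ has fiber the quaternion Stiefel manifold $\mathsf{Sp}(V\oplus U)/\mathsf{Sp}(V)$, whose bottom cell lies in dimension $4\dim_\mbf{H}(V)+3$, so the fiber is $(4\dim_\mbf{H}(V)+2)$-connected. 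A Blakers--Massey style connectivity estimate propagated through the homotopy limit over $(\es{J}^\mbf{H}_{/\mbf{H}^n})_0$ then shows $\mathsf{BSp}^{(n)}(V)$ is at least $(4n\dim_\mbf{H}(V)-1)$-connected. Converting this ``spectrum of multiplicity $4n$'' to an honest spectrum by inserting loops, as described in Remark~\ref{rem: spectra with multiplicity}, preserves this growth rate and exhibits $\partial_n\mathsf{BSp}(-)$ as a connective spectrum.

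The main obstacle will be the Blakers--Massey connectivity estimate for the homotopy limit $\tau_{n-1}\mathsf{BSp}(V)$ over the internal category $(\es{J}^\mbf{H}_{/\mbf{H}^n})_0$; this requires bookkeeping analogous to Weiss's original connectivity arguments for $\mathsf{BO}(-)$ in orthogonal calculus~\cite{Weiss, WeissErratum}, with the real dimension replaced throughout by four times the quaternion dimension. Once that bound is in hand, the remaining steps amount to routine connectivity arithmetic.
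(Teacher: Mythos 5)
Your argument is correct in outline but takes a genuinely different route from the paper. You pass through the classification of homogeneous functors, writing $D_n\mathsf{BSp}(\mbf{H}^r)\simeq \Omega^\infty[(S^{4nr}\wedge\partial_n\mathsf{BSp})_{h\mathsf{Sp}(n)}]$ so that the suspension coordinate $S^{4nr}$ visibly supplies the bound, and you then reduce everything to connectivity of the derivative spectrum $\partial_n\mathsf{BSp}(-)$. The paper instead works directly on the tower: having just shown $\mathsf{BSp}(-)$ is weakly polynomial, it uses the agreement estimates (the maps $F\to T_kF$ being roughly $((k+1)\dim_{\mbf{R}}(V)-b)$-connected) to bound the connectivity of the fiber $D_nF=\fiber(T_nF\to T_{n-1}F)$ by $n\dim_{\mbf{R}}(V)-c$, and pins down the constant via Goodwillie's $E_n(n)$/$O_n(n)$ conditions for the identity functor. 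Your reduction steps (connectivity of the smash, of homotopy orbits, of $\Omega^\infty$, and the $n=1$ computation of the Stiefel fiber $\mathsf{Sp}(V\oplus U)/\mathsf{Sp}(V)$ being $(4\dim_{\mbf{H}}(V)+2)$-connected) are all sound. The one caveat is that the step you correctly flag as the main obstacle\textemdash showing $\mathsf{BSp}^{(n)}(V)$ is $(4n\dim_{\mbf{H}}(V)-1)$-connected for all $n$\textemdash carries essentially all the analytic content and is of comparable difficulty to the agreement estimates the paper invokes; a cleaner way to run it than a Blakers--Massey analysis of the homotopy limit over $(\es{J}^{\mbf{H}}_{/\mbf{H}^n})_0$ is to induct using the fibration sequences $F^{(n+1)}(V)\to F^{(n)}(V)\to\Omega^{4n}F^{(n)}(V\oplus\mbf{H})$ coming from the cofiber sequences of Section~\ref{sec: combinatorics}, with Freudenthal supplying the connectivity of the structure maps in the base case. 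Neither proof is complete in detail, but yours trades the paper's tower-level bookkeeping for a structural statement about the derivatives.
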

\begin{proof}
    For any weakly polynomial functor $F$ with excess $c$, a standard argument with the Weiss tower yields that $D_nF(V)$ is at least $(n(\dim_\R(V))-c)$-connected, for some constant $c$. The proof follows by identifying $F = \mathsf{BSp}(-)$ and $V = \mbf{H}^r$, the the fact that the identity functor $\id: \T_\ast \to \T_\ast$ satisfies $E_n(n)$ in the sense of~\cite[Definition 4.1]{GoodCalcII} for all $n$, see for example,~\cite[Example 4.3]{GoodCalcII}, and hence $O_n(n)$ in the sense of~\cite[Definition 1.2]{GoodCalcIII}, see also~\cite[Proposition 1.5]{GoodCalcIII}. 
\end{proof}

\subsection{The Weiss tower of $\mathsf{BSp}^X(-)$}
In this section $X$ will be a finite-dimensional cell complex, and we will study the Weiss tower of the functor
\[
\mathsf{BSp}^X(-): V \longmapsto \Map_\ast(X, \mathsf{BSp}(V)).
\]
By applying $\pi_0$ to this functor we get
\[
\pi_0(\mathsf{BSp}^X(\mbf{H}^r)) = \pi_0\Map_\ast(X, \mathsf{BSp}(r))= [X, \mathsf{BSp}(r)],
\]
which by the classification of quaternionic vector bundles over $X$, is equivalent to the set of rank $r$ bundles over $X$. The set of stably trivial vector bundles is (under niceness conditions on $X$) the kernel of the canonical map
\[
[X, \mathsf{BSp}(r)] \longrightarrow [X, \mathsf{BSp}],
\]
and the rest of this section is dedicated to identifying this with $\pi_0\Map(X, D_1\mathsf{BSp}(r))$, yielding the result.

\begin{rem}
Much of what we say will be true when $\mathsf{BSp}(-)$ is replaced by an arbitrary (analytic) functor.
\end{rem}

\begin{lem}
Let $X$ be a finite-dimensional cell complex. The $n$-polynomial approximation of the functor 
\[
\mathsf{BSp}^X(-): V \longmapsto \Map_\ast(X, \mathsf{BSp}(V)).
\]
is equivalent to the functor 
\[
(P_n\mathsf{BSp})^X(-): V \longmapsto \Map_\ast(X, P_n\mathsf{BSp}(V)).
\]
In particular, the functor
\[
V \longmapsto \Map_\ast(X, \mathsf{BSp}(V)).
\]
is weakly polynomial.
\end{lem}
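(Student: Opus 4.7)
The plan is to show that $\Map_\ast(X,-)$ intertwines with the construction of $T_n$, and then derive weak polynomiality from the resulting identification together with the connectivity-loss estimate for $\Map_\ast(X,-)$ when $X$ is a finite-dimensional cell complex.

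First I would verify that $\Map_\ast(X,-)$ commutes with $\tau_n$. Since $\Map_\ast(X,-)$ is a right adjoint (to $X \wedge (-)$) it preserves all homotopy limits, and in particular
\[
\tau_n\bigl(\mathsf{BSp}^X\bigr)(V) \;=\; \underset{U \in (\es{J}^\mbf{H}_{/\mbf{H}^{n+1}})_0}{\holim}\,\Map_\ast\bigl(X,\mathsf{BSp}(V\oplus U)\bigr) \;\simeq\; \Map_\ast\bigl(X,\tau_n\mathsf{BSp}(V)\bigr).
\]
Iterating this natural equivalence yields $(\tau_n)^k(\mathsf{BSp}^X)(V)\simeq\Map_\ast(X,(\tau_n)^k\mathsf{BSp}(V))$ for every $k\geq 0$. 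Next I would pass to the sequential homotopy colimit. After replacing the sequential diagram by a cofibrant one in the projective model structure, the transition maps are cofibrations of spaces and $X$ is a finite-dimensional cell complex (so compact in the relevant sense), so $\Map_\ast(X,-)$ commutes with the sequential homotopy colimit:
\[
T_n\mathsf{BSp}^X(V) \;=\; \hocolim_k (\tau_n)^k\mathsf{BSp}^X(V) \;\simeq\; \Map_\ast\bigl(X,\hocolim_k(\tau_n)^k\mathsf{BSp}(V)\bigr) \;=\; \Map_\ast(X,T_n\mathsf{BSp}(V)).
\]
Since $T_n\mathsf{BSp}$ is $n$-polynomial and $\Map_\ast(X,-)$ preserves $n$-polynomial functors (again because it preserves limits, so it takes the defining equivalence for $T_n\mathsf{BSp}$ to the defining equivalence for $T_n\mathsf{BSp}^X$), the functor $(T_n\mathsf{BSp})^X$ is itself $n$-polynomial, and the natural map $\mathsf{BSp}^X\to(T_n\mathsf{BSp})^X$ exhibits it as the universal such approximation.

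For the ``in particular'' clause, I would use the preceding lemma that $\mathsf{BSp}$ is weakly polynomial: the natural map $\mathsf{BSp}(V)\to T_n\mathsf{BSp}(V)$ is at least $(c_n\dim_\mbf{R}(V)-b_n)$-connected once $\dim(V)$ is large enough. Applying $\Map_\ast(X,-)$ with $X$ a $d$-dimensional cell complex reduces connectivity by at most $d$ (standard obstruction-theoretic connectivity estimate for mapping spaces into a highly connected map, which also underpins the quaternion version of \cite[Example 9.13]{TaggartUnitary}). Hence
\[
\mathsf{BSp}^X(V) \longrightarrow (T_n\mathsf{BSp})^X(V) \;\simeq\; T_n\mathsf{BSp}^X(V)
\]
is $(c_n\dim_\mbf{R}(V)-b_n-d)$-connected for $\dim(V)$ sufficiently large, which is precisely the condition for this map to be an order $n$ agreement. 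As this holds for every $n\geq 0$, the functor $\mathsf{BSp}^X$ is weakly polynomial.

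The main obstacle I expect is the clean interchange of $\Map_\ast(X,-)$ with the sequential homotopy colimit defining $T_n$ from $\tau_n$. This is a point-set issue rather than a conceptual one: one must either cofibrantly replace the $(\tau_n)^k$-tower or appeal to the compactness of a finite-dimensional cell complex together with the cofibrancy of the standard telescope construction. Everything else is a formal consequence of $\Map_\ast(X,-)$ being a right adjoint plus the weak polynomiality of $\mathsf{BSp}$ established in the previous lemma.
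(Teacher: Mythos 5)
Your argument is correct and takes essentially the same route as the paper: the paper's proof is a two-sentence sketch invoking exactly the two facts you verify, namely that $\Map_\ast(X,-)$ preserves the homotopy limits defining $\tau_n$ (hence polynomiality) and commutes with the filtered homotopy colimit defining $T_n$, with the weak polynomiality clause coming from the connectivity loss of at most $\dim X$ when mapping out of $X$. The one point to watch is your parenthetical ``finite-dimensional cell complex (so compact in the relevant sense)'': finite-dimensionality alone does not give compactness, which is what the interchange with the sequential colimit actually requires\textemdash a finite/finite-dimensional slippage that is already present between the paper's statement and its proof.
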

\begin{proof}
    The first statement follows from the properties of maps from a finite cell complex to filtered homotopy colimits and finite homotopy limits. The second statement follows immediately from the properties of mapping from a finite cell complex to a finite homotopy limit. 
\end{proof}

\subsection{The Weiss tower of $\mathsf{BSp}^X(-)$ in a range} Hu~\cite{Hu} provided a classification of certain rank $r$ complex bundles over a $d$-dimensional complex whenever $\frac{d}{4} \leq r$. In this section we provide a similar classification result for certain rank $r$ quaternion bundles over a $d$-dimensional complex whenever $\frac{d+1}{8}\leq r$.  Denote by $\mathsf{KSp}$ quaternionic topological $K$-theory and by $\widetilde{\mathsf{KSp}}$ the reduced version of quaternionic topological $K$-theory. 

\begin{lem}
Let $X$ be a $d$-dimensional cell complex for which $\widetilde{\mathsf{KSp}}^{-1}(X) =0$. On connected components, the Weiss tower of $\mathsf{BSp}^X(-)$ stabilises at the first stage, when evaluated at $\mbf{H}^r$ with $\frac{d+1}{8}\leq r < \frac{d-2}{4}$, that is, for $d$ and $r$ as stated, the Weiss tower of $\mathsf{BSp}^X(-)$ on connected components is the exact sequence
\[
0 \longrightarrow [X, Q\Sigma^3\mbf{H}P^\infty_r] \longrightarrow [X, \mathsf{BSp}(r)] \longrightarrow [X, \mathsf{BSp}],
\]
where $\mathsf{BSp} = \bigcup_{r \geq 0} \mathsf{BSp}(r)$.
\end{lem}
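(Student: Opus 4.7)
The plan is to transfer the Weiss tower of $\mathsf{BSp}^X(-)$ at $\mbf{H}^r$ to the Weiss tower of $\mathsf{BSp}(-)$ via $\Map_\ast(X,-)$ and exploit the connectivity estimates on the layers established earlier in this section. By the preceding lemma, $T_n\mathsf{BSp}^X(r)\simeq\Map_\ast(X,T_n\mathsf{BSp}(r))$; since $\Map_\ast(X,-)$ preserves fiber sequences (it is a right adjoint and $X$ is cofibrant), passing to fibers of the tower yields $D_n\mathsf{BSp}^X(r)\simeq\Map_\ast(X,D_n\mathsf{BSp}(r))$ for every $n\geq 1$. Combining the bound $\mathsf{Conn}(D_n\mathsf{BSp}(r))\geq 4nr-1$ from the earlier corollary with the elementary estimate $\mathsf{Conn}(\Map_\ast(X,Y))\geq\mathsf{Conn}(Y)-d$ for a pointed $d$-dimensional cell complex $X$, one deduces
\[
\mathsf{Conn}(D_n\mathsf{BSp}^X(r))\geq 4nr-1-d.
\]

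For $n\geq 2$ and $r\geq (d+1)/4$ this connectivity is at least $8r-1-d\geq 2(d+1)-1-d = d+1$, which is $\geq 2$ as soon as $d\geq 1$ (the case $d=0$ is immediate). Hence the fiber sequence $D_n\mathsf{BSp}^X(r)\to T_n\mathsf{BSp}^X(r)\to T_{n-1}\mathsf{BSp}^X(r)$ forces $T_n\to T_{n-1}$ to induce isomorphisms on \emph{both} $\pi_0$ and $\pi_1$ whenever $n\geq 2$. Consequently the tower $\{\pi_0 T_n\mathsf{BSp}^X(r)\}_{n\geq 1}$ is constant, while $\{\pi_1 T_n\mathsf{BSp}^X(r)\}_{n\geq 2}$ is eventually constant. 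The Weiss tower of $\mathsf{BSp}(-)$ converges at $\mbf{H}^r$ by the earlier convergence lemma, and $\Map_\ast(X,-)$ preserves this convergence since it preserves (homotopy) limits. Applying the Milnor $\lim^1$ sequence to the resulting tower\textemdash whose error term vanishes by eventual constancy of the $\pi_1$ tower\textemdash yields
\[
[X,\mathsf{BSp}(r)]\;=\;\pi_0\mathsf{BSp}^X(r)\;=\;\pi_0 T_1\mathsf{BSp}^X(r),
\]
which is precisely the claim that the Weiss tower stabilises at the first stage on connected components.

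To obtain the asserted exact sequence, I would then take the long exact sequence of homotopy groups of the fiber sequence $\Map_\ast(X,D_1\mathsf{BSp}(r))\to\Map_\ast(X,T_1\mathsf{BSp}(r))\to\Map_\ast(X,\mathsf{BSp})$ and substitute $D_1\mathsf{BSp}(r)\simeq Q\Sigma^3\mbf{H}P^\infty_r$ together with the standard identification $\pi_0\Map_\ast(X,QY)=\{X,Y\}$ for $X$ a finite cell complex. The hard part will be establishing the initial $0$ on the left of the sequence: the raw long exact sequence only guarantees exactness starting from $\pi_1\Map_\ast(X,\mathsf{BSp})$, so injectivity of $\{X,\Sigma^3\mbf{H}P^\infty_r\}\to[X,\mathsf{BSp}(r)]$ demands the connecting map $\pi_1\Map_\ast(X,\mathsf{BSp})\to\{X,\Sigma^3\mbf{H}P^\infty_r\}$ vanish. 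My plan for this obstacle is to promote the fiber sequence $D_1\mathsf{BSp}(r)\to T_1\mathsf{BSp}(r)\to\mathsf{BSp}$ to one of infinite loop spaces\textemdash using that $\mathsf{BSp}$ carries an $E_\infty$ structure via Whitney sum and that $T_1\mathsf{BSp}$ inherits compatible structure as an $1$-polynomial approximation\textemdash extend the long exact sequence one step further to the left, and use the $2$-connectivity of $D_2\mathsf{BSp}^X(r)$ established above to force surjectivity of $\pi_1\Map_\ast(X,T_1\mathsf{BSp}(r))\to\pi_1\Map_\ast(X,\mathsf{BSp})$, whence the connecting homomorphism vanishes.
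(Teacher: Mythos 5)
Your overall strategy\textemdash transfer the tower through $\Map_\ast(X,-)$, apply the connectivity bound $\mathsf{Conn}(D_n\mathsf{BSp}(r))\geq 4nr-1$ together with $\mathsf{Conn}(\Map_\ast(X,Y))\geq\mathsf{Conn}(Y)-d$, and then read off the exact sequence from $D_1\to T_1\to T_0$\textemdash matches the paper's argument. You are actually more precise than the paper in two places: the paper only asserts the layers are connected for $n\geq 2$, whereas you compute that they are $(d+1)$-connected, and you make the Milnor $\lim^1$ step explicit, which the paper leaves implicit in the chain $\pi_0\mathsf{BSp}^X(\mathbf{H}^r)\cong\pi_0(\holim_n T_n)\cong\pi_0(T_1)$. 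You are also right to flag the initial $0$ as the genuine sticking point: the long exact sequence of the fiber sequence by itself only yields exactness as pointed sets, not injectivity of $\pi_0 D_1\to\pi_0 T_1$, and the paper's proof does not address this.

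However, your proposed fix for the initial $0$ has a gap. First, the $1$-connectivity of $D_2\mathsf{BSp}^X(r)$ (and of all higher layers) gives you isomorphisms $\pi_1 T_n\cong\pi_1 T_1$ for $n\geq 1$ and hence $\pi_1\mathsf{BSp}^X(\mathbf{H}^r)\cong\pi_1 T_1$, but it says nothing directly about the map $\pi_1 T_1\to\pi_1 T_0$, which is the one you need to be surjective; this is exactly what you are trying to prove, so feeding the connectivity of $D_2$ back into the same fiber sequence is circular. Second, the infinite-loop-space promotion is not clearly available: while $\mathsf{BSp}$ and $D_1\mathsf{BSp}(r)\simeq Q\Sigma^3\mathbf{H}P^\infty_r$ are infinite loop spaces, $T_1\mathsf{BSp}(\mathbf{H}^r)$ is not obviously one, and even granting it the delooping would merely replace the long exact sequence of pointed sets by one of abelian groups without computing the relevant map. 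A cleaner route is to observe that the map $\mathsf{BSp}^X(\mathbf{H}^r)\to T_0\mathsf{BSp}^X(\mathbf{H}^r)$ factors as $\mathsf{BSp}^X(\mathbf{H}^r)\to T_1\mathsf{BSp}^X(\mathbf{H}^r)\to T_0\mathsf{BSp}^X(\mathbf{H}^r)$, and on $\pi_1$ the composite is the stabilization $[X,\mathsf{Sp}(r)]\to[X,\mathsf{Sp}]$. Since the homotopy fiber $\mathsf{Sp}/\mathsf{Sp}(r)$ of $\mathsf{Sp}(r)\to\mathsf{Sp}$ is $(4r+2)$-connected and $d\leq 4r-1$, obstruction theory shows this composite is surjective (indeed bijective), whence $\pi_1 T_1\to\pi_1 T_0$ is surjective and the connecting map to $\pi_0 D_1$ vanishes. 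This supplies the initial $0$ directly, without any appeal to infinite loop structure.
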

\begin{proof}
Since $X$ is $d$-dimensional and $d$ and $r$ are bounded as above, the $n$-th layer of the Weiss tower of $\mathsf{BSp}^X(\mbf{H}^r)$ is connected for all $n \geq 2$. It follows that on $\pi_0$, 
\[
\pi_0(\mathsf{BSp}^X(\mbf{H}^r)) \cong \pi_0(\holim_n T_n\mathsf{BSp}^X(\mbf{H}^r)) \cong \pi_0(T_1\mathsf{BSp}^X(\mbf{H}^r)),
\]
and hence the fiber sequence 
\[
D_1\mathsf{BSp}^X(\mbf{H}^r) \longrightarrow T_1\mathsf{BSp}^X(\mbf{H}^r) \longrightarrow T_0\mathsf{BSp}^X(\mbf{H}^r),
\]
on $\pi_0$ is the exact sequence
\[
0 \longrightarrow [X, Q\Sigma^3\mbf{H}P^\infty_r] \longrightarrow [X, \mathsf{BSp}(r)] \longrightarrow [X, \mathsf{BSp}].\qedhere
\]
\end{proof}

\begin{cor}
The set of stably trivial quaternion vector bundles of rank $r$ over a $d$-dimensional cell complex $X$ with $\frac{d+1}{8}\leq r < \frac{d-2}{4}$ and $\widetilde{\mathsf{KSp}}^{-1}(X)=0$ is in bijection with the set of stable maps from $X$ to $\Sigma^3\mbf{H}P^\infty_r$, the three-fold suspension of the stunted projective space. 
\end{cor}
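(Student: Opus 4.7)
The plan is to read off this corollary directly from the preceding lemma after translating each term into the desired geometric and stable-homotopical language. First I would invoke the standard classification of quaternion vector bundles: for a finite cell complex $X$, the pointed set $[X,\mathsf{BSp}(r)]$ is in natural bijection with isomorphism classes of rank $r$ quaternion vector bundles over $X$, and under this identification the map $[X,\mathsf{BSp}(r)]\to[X,\mathsf{BSp}]$ induced by the canonical stabilisation $\mathsf{BSp}(r)\to\mathsf{BSp}=\bigcup_{k\ge 0}\mathsf{BSp}(k)$ sends a bundle to its stable equivalence class. In particular, the set of stably trivial rank $r$ quaternion bundles over $X$ is precisely the preimage of the basepoint of $[X,\mathsf{BSp}]$ under this map.

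Next, I would feed in the exact sequence provided by the preceding lemma, which is valid under the hypothesis $\tfrac{d+1}{4}\le r$:
\[
0 \longrightarrow [X, Q\Sigma^3\mbf{H}P^\infty_r] \longrightarrow [X, \mathsf{BSp}(r)] \longrightarrow [X, \mathsf{BSp}].
\]
Exactness at the middle term identifies the kernel of the stabilisation map with $[X, Q\Sigma^3\mbf{H}P^\infty_r]$, so the latter is canonically in bijection with the set of stably trivial rank $r$ quaternion bundles over $X$.

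Finally, since $Q=\Omega^\infty\Sigma^\infty$ is the stabilisation functor from pointed spaces to spectra followed by the infinite loop space functor, one has the tautological identification $[X, QY]=\{X,Y\}$ for any pointed space $Y$, where $\{-,-\}$ denotes stable homotopy classes of maps. Taking $Y = \Sigma^3\mbf{H}P^\infty_r$ completes the chain of bijections and yields the claimed identification with $\{X, \Sigma^3\mbf{H}P^\infty_r\}$.

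I do not anticipate any genuine obstacle: essentially all the content sits in the preceding lemma, and what remains is a dictionary translation. The only point one might wish to verify carefully is that the stabilisation map on classifying spaces is indeed the $T_0$-map of the Weiss tower of $\mathsf{BSp}^X(-)$, but this is transparent from the fact that $T_0F(V)$ is the constant value $F(\infty)=\operatorname{hocolim}_V F(V)$ together with $\mathsf{BSp}=\bigcup_{k\ge 0}\mathsf{BSp}(k)$.
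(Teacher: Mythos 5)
Your argument is correct and is exactly the route the paper intends: the section's setup already identifies stably trivial rank $r$ bundles with the kernel of $[X,\mathsf{BSp}(r)]\to[X,\mathsf{BSp}]$, the preceding lemma's exact sequence identifies that kernel with $[X,Q\Sigma^3\mbf{H}P^\infty_r]$, and $[X,QY]=\{X,Y\}$ for a finite complex $X$ finishes it. No gaps.
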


% \begin{rem}
% In the complex case, the bound we get is off-by-one from the bound obtained by Hu~\cite{Hu}. A better understanding of the full tower and the connectivity of the layers will likely improve the bound.
% \end{rem}

\bibliographystyle{alpha}
\bibliography{references}

\begin{thebibliography}{RWW17}

\bibitem[AB13]{AjayiBanyaga}
D.~O.~A. Ajayi and A.~Banyaga.
\newblock An explicit retraction of symplectic flag manifolds onto complex flag
  manifolds.
\newblock {\em J. Geom.}, 104(1):1--9, 2013.

\bibitem[ABFJ23]{ABFJ}
M.~Anel, G.~Biedermann, E.~Finster, and A.~Joyal.
\newblock Left-exact localizations of {{\(\infty\)}}-topoi iii: The acyclic
  product.
\newblock {\em arXiv preprint arXiv:2308.15573}, 2023.

\bibitem[Aro98]{AroneAkfree}
G.~Arone.
\newblock Iterates of the suspension map and {Mitchell}'s finite spectra with
  {{\(A_k\)}}-free cohomology.
\newblock {\em Math. Res. Lett.}, 5(4):485--496, 1998.

\bibitem[Arr23]{Arro}
K.~Arro.
\newblock {FI}-calculus and representation stability.
\newblock {\em arXiv:2306.13597}, 2023.

\bibitem[BdB18]{BdB}
P.~Boavida~de Brito.
\newblock Segal objects and the {Grothendieck} construction.
\newblock In {\em An alpine bouquet of algebraic topology. Alpine algebraic and
  applied topology conference, Saas-Almagell, Switzerland, August 15--21, 2016.
  Proceedings}, pages 19--44. Providence, RI: American Mathematical Society
  (AMS), 2018.

\bibitem[BE16]{BarnesEldred}
D.~Barnes and R.~Eldred.
\newblock Comparing the orthogonal and homotopy functor calculi.
\newblock {\em J. Pure Appl. Algebra}, 220(11):3650--3675, 2016.

\bibitem[Beh12]{BehrensEHP}
M.~Behrens.
\newblock {\em The {Goodwillie} tower and the {EHP} sequence}, volume 1026 of
  {\em Mem. Am. Math. Soc.}
\newblock Providence, RI: American Mathematical Society (AMS), 2012.

\bibitem[BH24]{BhattacharyaHu}
P.~Bhattacharya and Y.~Hu.
\newblock Equivariant {W}eiss calculus.
\newblock {\em arXiv:2410.12087}, 2024.

\bibitem[BO13]{BarnesOman}
D.~Barnes and P.~Oman.
\newblock Model categories for orthogonal calculus.
\newblock {\em Algebr. Geom. Topol.}, 13(2):959--999, 2013.

\bibitem[CT]{CarrTaggartcolimits}
M.~Carr and N.~Taggart.
\newblock On homotopy (co)limits of functors on categories internal to spaces.
\newblock In Preparation.

\bibitem[Dot16]{DottoEquivariantdiagrams}
E.~Dotto.
\newblock Equivariant diagrams of spaces.
\newblock {\em Algebr. Geom. Topol.}, 16(2):1157--1202, 2016.

\bibitem[Dug03]{DuggerDeltaGeneratedSpaces}
D.~Dugger.
\newblock Notes on {{\(\Delta\)}}-generated spaces, 2003.
\newblock available at \url{https://pages.uoregon.edu/ddugger/delta.html}.

\bibitem[Goo92]{GoodCalcII}
T.~G. Goodwillie.
\newblock Calculus. {II}: {Analytic} functors.
\newblock {\em \(K\)-Theory}, 5(4):295--332, 1992.

\bibitem[Goo03]{GoodCalcIII}
T.~G. Goodwillie.
\newblock Calculus. {III}: {Taylor} series.
\newblock {\em Geom. Topol.}, 7:645--711, 2003.

\bibitem[Gra76]{Grayson}
D.~Grayson.
\newblock Higher algebraic {K}-theory: {II}.
\newblock Algebr. {{\(K\)}}-{Theory}, {Proc}. {Conf}. {Evanston} 1976, {Lect}.
  {Notes} {Math}. 551, 217-240 (1976)., 1976.

\bibitem[Hel78]{Helgason}
S.~Helgason.
\newblock {\em Differential geometry, {Lie} groups, and symmetric spaces},
  volume~80 of {\em Pure Appl. Math., Academic Press}.
\newblock Academic Press, New York, NY, 1978.

\bibitem[Hen23]{Hendrian}
L.~Hendrian.
\newblock Monoidal structures in orthogonal calculus.
\newblock {\em arXiv:2309.15058}, 2023.

\bibitem[Hu23]{Hu}
Yang Hu.
\newblock Metastable complex vector bundles over complex projective spaces.
\newblock {\em Trans. Am. Math. Soc.}, 376(11):7783--7814, 2023.

\bibitem[Ili15]{Ilias}
A.~Ilias.
\newblock Model structure on the category of small topological categories.
\newblock {\em J. Homotopy Relat. Struct.}, 10(1):63--70, 2015.

\bibitem[KRW21]{KrannichRandal-Williams}
M.~Krannich and O.~Randal-Williams.
\newblock Diffeomorphisms of discs and the second {W}eiss derivative of
  {BTop}(-).
\newblock {\em arXiv:2109.03500}, 2021.

\bibitem[Kuh15]{KuhnWhitehead}
N.~J. Kuhn.
\newblock The {Whitehead} conjecture, the tower of {{\(S^{1}\)}} conjecture,
  and {Hecke} algebras of type {A}.
\newblock {\em J. Topol.}, 8(1):118--146, 2015.

\bibitem[Lur09]{HTT}
J.~Lurie.
\newblock {\em Higher topos theory}, volume 170 of {\em Ann. Math. Stud.}
\newblock Princeton, NJ: Princeton University Press, 2009.

\bibitem[Lur23]{kerodon}
J.~Lurie.
\newblock Kerodon.
\newblock \url{https://kerodon.net}, 2023.

\bibitem[MG16]{Mazel-GeeAdjunctions}
A.~Mazel-Gee.
\newblock Quillen adjunctions induce adjunctions of quasicategories.
\newblock {\em New York J. Math.}, 22:57--93, 2016.

\bibitem[RV22]{RVEl}
E.~Riehl and D.~Verity.
\newblock {\em Elements of {$\infty$}-Category Theory}.
\newblock Cambridge Studies in Advanced Mathematics. Cambridge University
  Press, 2022.

\bibitem[RWW17]{RWW}
O.~Randal-Williams and N.~Wahl.
\newblock Homological stability for automorphism groups.
\newblock {\em Adv. Math.}, 318:534--626, 2017.

\bibitem[Sch22]{SchwedeGlobalSplittings}
S.~Schwede.
\newblock Global stable splittings of {Stiefel} manifolds.
\newblock {\em Doc. Math.}, 27:789--845, 2022.

\bibitem[Tag21]{TaggartOCUC}
N.~Taggart.
\newblock Comparing the orthogonal and unitary functor calculi.
\newblock {\em Homology Homotopy Appl.}, 23(2):227--256, 2021.

\bibitem[Tag22a]{TaggartUnitary}
N.~Taggart.
\newblock Unitary calculus: model categories and convergence.
\newblock {\em J. Homotopy Relat. Struct.}, 17(3):419--462, 2022.

\bibitem[Tag22b]{TaggartReality}
N.~Taggart.
\newblock Unitary functor calculus with reality.
\newblock {\em Glasg. Math. J.}, 64(1):197--230, 2022.

\bibitem[Tag23]{TaggartUCReality}
N.~Taggart.
\newblock Recovering unitary calculus from calculus with reality.
\newblock {\em J. Pure Appl. Algebra}, 227(12):42, 2023.

\bibitem[Tag24]{TaggartOCReality}
N.~Taggart.
\newblock Comparing orthogonal calculus and calculus with reality.
\newblock {\em Math. Z.}, 307(4):26, 2024.
\newblock Id/No 81.

\bibitem[Wei95]{Weiss}
M.~Weiss.
\newblock Orthogonal calculus.
\newblock {\em Trans. Am. Math. Soc.}, 347(10):3743--3796, 1995.

\bibitem[Wei98]{WeissErratum}
M.~Weiss.
\newblock Erratum: ``{O}rthogonal calculus''.
\newblock {\em Trans. Amer. Math. Soc.}, 350(2):851--855, 1998.

\bibitem[Yav24]{Yavuz}
E.~Yavuz.
\newblock {{\(C_2\)}}-equivariant orthogonal calculus.
\newblock {\em arXiv:2408.15891}, 2024.

\end{thebibliography}
\end{document}